\documentclass[12pt]{article}

\usepackage{amssymb, amsmath,amsthm,mathtools,mathrsfs,color}
\usepackage[all]{xy}
\usepackage{enumerate}
\usepackage{enumitem}   
\usepackage{booktabs}
\usepackage{caption}

\usepackage{stmaryrd} 
\usepackage{multirow,longtable}
\usepackage{pgffor}
\usepackage{tikz,tikz-cd}
\usepackage{hyperref}
\hypersetup{
    colorlinks,
    linkcolor=blue, 
    citecolor={blue},
    urlcolor={blue}
}
\usepackage{geometry}
\setlist{itemsep=1pt,topsep=5pt,parsep=0pt} 
\setlist[enumerate]{label={\rm (\arabic{*})}}

\let\C\relax  

\definecolor{shadecolor}{RGB}{241, 241, 255}

\usetikzlibrary{calc}
\tikzset{
	dot/.style={draw,circle,inner sep=0.8pt,fill=black},
	cir/.style={draw,circle,inner sep=0.8pt}
}
\DeclareMathAlphabet{\mathcal}{OMS}{cmsy}{m}{n}

\theoremstyle{plain}
\newtheorem{Def}{Definition}[section]
\newtheorem{Prop}[Def]{Proposition}
\newtheorem{Theo}[Def]{Theorem}
\newtheorem{Lem}[Def]{Lemma}
\newtheorem{Coro}[Def]{Corollary}

\newtheorem{Remark}[Def]{Remark}
\theoremstyle{definition}
\newtheorem{Example}[Def]{Example}
\newtheorem{Eg}[Def]{Example}

\DeclareMathOperator{\add}{add}

\DeclareMathOperator{\gldim}{gldim}
\DeclareMathOperator{\Ext}{Ext}

\DeclareMathOperator{\Aut}{Aut}

\DeclareMathOperator{\End}{End}

\DeclareMathOperator{\Hom}{Hom}

\DeclareMathOperator{\supp}{\sf supp}

\DeclareMathOperator{\Ker}{Ker}

\DeclareMathOperator{\ind}{\sf ind}
\DeclareMathOperator{\id}{\sf id}
\DeclareMathOperator{\domdim}{dom.dim}
\DeclareMathOperator{\rigdim}{rig.dim}
\DeclareMathOperator{\rd}{rd}

\newcommand{\defCategory}[2]{
	\newcommand{#1}{#2\defvariable}}

\NewDocumentCommand{\defvariable}{O{}O{}}{
	\if\relax\detokenize{#2}\relax 
	\if\relax\detokenize{#1}\relax 
	\else 
	({#1})
	\fi
	\else
	\if\relax\detokenize{#1}\relax
	^{{\rm #2}}
	\else 
	^{{\rm #2}}({#1}) 
	\fi 
	\fi } 

\defCategory{\C}{\mathcal{C}}
\defCategory{\K}{\mathcal{K}}
\defCategory{\D}{\mathcal{D}}
\defCategory{\sA}{\mathcal{A}}
\defCategory{\sB}{\mathcal{B}}
\defCategory{\sC}{\mathscr{C}}
\defCategory{\sD}{\mathscr{D}}

\newcommand{\Db}[1][]{\D[#1][b]}

\newcommand{\T}{\mathcal{T}}

\def\modcat#1{{\sf mod}\mbox{-}{#1}}

\def\Modcat#1{{\sf Mod}\mbox{-}{#1}}

\def\stmodcat#1{\underline{\sf mod}\mbox{-}{#1}}

\newcommand{\lra}{\longrightarrow}

\newcommand{\lraf}[1]{\stackrel{#1}{\lra}}

\renewcommand{\leq}{\leqslant}
\renewcommand{\geq}{\geqslant}

\title{Higher cluster tilting objects in locally finite triangulated categories}
\author{Qi Bin, Yifu Han, Wei Hu\thanks{Corresponding author: huwei@bnu.edu.cn}
}
\date{}

\begin{document}

\maketitle

\renewcommand{\thefootnote}{\alph{footnote}}
\setcounter{footnote}{-1} \footnote{2020 Mathematics Subject
Classification: 18E30;  16G70, 16G10.}
\renewcommand{\thefootnote}{\alph{footnote}}
\setcounter{footnote}{-1} \footnote{Keywords: Locally finite triangulated categories; $d$-cluster tilting objects;   derived equivalences.}

\begin{abstract}
We study higher cluster tilting objects through covering functors from derived
categories of hereditary algebras.  The covering formalism reduces the
existence problem to the equivariant problem of finding \(G\)-stable
\(d\)-cluster tilting objects in  \(d\)-cluster categories.  For
triangulated categories with finitely many indecomposable objects this gives a
complete ADE existence criterion and explicit counting formulas.  We also prove
that, in finite Frobenius models, the endomorphism algebras of all
\(d\)-cluster tilting objects are derived equivalent.  Applications are given
to Cohen--Macaulay finite categories, finite noncommutative crepant
resolutions, and rigidity dimensions of representation-finite self-injective
algebras.
\end{abstract}

\section{Introduction}

Highly rigid objects in triangulated or exact categories play an important
role in representation theory.  Typical examples are tilting modules in module categories and tilting
complexes in derived categories
\cite{Happel1988aa,Rickard1989ab,Keller2007aa}.  Higher cluster tilting
objects form another distinguished class of highly rigid objects: they are
higher analogues of tilting and cluster tilting objects in Iyama's higher
Auslander--Reiten theory \cite{Iyama2007ab}.  They provide a basic organizing
structure in higher representation-finite and preprojective algebras
\cite{Iyama2011ac,Iyama2013ab}, produce important examples and constructions in
Calabi--Yau and \(n\)-angulated categories
\cite{Keller2008ab,Geiss2013ab}, and appear in categorical models for cluster
algebras \cite{Buan2006aa,Buan2009ab,Buan2011aa}.  They are also closely
related to noncommutative crepant resolutions, where mutation and derived
equivalence of cluster tilting or maximal modification modules are central
tools \cite{VanDenBergh2004,Iyama2008ac,Iyama2013ac,Iyama2013aa}.

It is therefore natural, and already quite subtle, to ask whether a given
triangulated category or a module category of a finite-dimensional algebra
admits a \(d\)-cluster tilting object.  For self-injective algebras this
existence question is already highly restrictive: Erdmann and Holm proved that
if a self-injective algebra admits a maximal \(n\)-orthogonal module in the
sense of Iyama, then all \(A\)-modules have complexity at most \(1\)
\cite{ErdmannHolm2008}.  In this direction, Darp\"o and Iyama
studied \(d\)-cluster tilting modules over self-injective algebras via orbit
algebras of repetitive categories and covering theory; they also obtained
necessary and sufficient conditions for self-injective Nakayama algebras and
conjectured that, for a fixed finite-dimensional algebra, only finitely many
integers \(d\) can occur \cite{DarpoIyama2020}.  This is closely related to
another conjecture in \cite{ChenFangKernerKoenigYamagata2021}, which asserts
that every finite-dimensional algebra has finite rigidity dimension, a
homological invariant measuring the largest integer \(d\) for which the algebra
admits a \(d\)-rigid generator-cogenerator whose endomorphism algebra has
finite global dimension.  On the other hand, special
rigid objects often produce derived equivalences between their endomorphism
algebras.  This leads to a second natural question: whether the endomorphism
algebras of \(d\)-cluster tilting objects in a triangulated category, or in a
Frobenius model of it, must be derived equivalent.

In this paper, we consider a triangulated category \(\mathcal T\) equipped with a covering
from the derived category \(\Db[H]\) of a hereditary algebra, with fibres given
by the orbits of a group \(G\) of autoequivalences.  Under this covering, the
existence of \(d\)-cluster tilting objects in \(\mathcal T\) is transformed
into the existence of \(G\)-stable \(d\)-cluster tilting objects in the
\(d\)-cluster category \(\mathcal C_d(H)\).  When \(\mathcal T\) has only
finitely many indecomposable objects, the classification theorem gives an
Auslander--Reiten quiver of the form \(\mathbb Z\Delta/G\), where \(\Delta\) is
Dynkin and \(G\) is weakly admissible \cite{Xiao2005,Amiot2007}.  In this
finite situation the above reduction leads to a complete ADE answer.

The first main result is a complete ADE existence criterion.  Let \(L_G\) be
the order induced by a generator of \(G\), defined in Section 3 below.
\medskip

\noindent{\bf Theorem A} (Theorem \ref{theo-existence-dct}). {\em 
Let \(\mathcal T\) be a locally finite triangulated category whose
Auslander--Reiten quiver is \(\mathbb Z\Delta/G\), where \(\Delta\) is Dynkin
and \(G\) is weakly admissible.  Then \(\mathcal T\) admits a \(d\)-cluster
tilting object if and only if \(L_G\) satisfies the explicit ADE conditions in
Table \ref{tab:existence-conditions}.}

\medskip
The same analysis gives the labelled counting formulas in Theorem
\ref{theo-counting-dct}.  Thus the proof of sufficiency is constructive rather
than merely existential.  For \(d=1\), Corollary
\ref{coro-CY-generator-conditions} compares the criterion with the
finite-type \(2\)-Calabi--Yau classification of
Burban--Iyama--Keller--Reiten.

The second main result concerns endomorphism algebras.  Its point is that the
classification theorem is not only an existence result: it also gives enough
equivariant mutation connectedness to force derived equivalences in finite
Frobenius models.

\medskip
\noindent{\bf Theorem B} (Theorem \ref{theo-equivariant-mutation-connected}). {\em 
Let $k$ be an algebraically closed field, and let \(\mathcal T\) be a Hom-finite Krull--Schmidt triangulated $k$-category
with only finitely many indecomposable objects.  Assume that \(\mathcal T\)
admits \(d\)-cluster tilting objects and that
\(\mathcal T=\underline{\mathcal E}\) for a Frobenius category \(\mathcal E\)
with a projective generator.  Then the endomorphism algebras of any two
\(d\)-cluster tilting objects  in \(\mathcal E\) are derived equivalent.}

\medskip

The proof is organized around \(G\)-mutation.  First, Proposition
\ref{prop-G-mutation-derived-equivalence} shows that \(G\)-mutation reachable
\(G\)-stable \(d\)-cluster tilting objects give derived equivalent
endomorphism algebras after passing to the Frobenius model.  The remaining
point is to prove  
\(G\)-mutation connectedness for the relevant ADE \(d\)-cluster categories
(Proposition \ref{prop-dynkin-G-mutation-connected}).  Corollaries
\ref{coro-finite-frobenius-derived-equivalence} and
\ref{coro-CMfinite-isolated-NCCR} apply the theorem to Cohen--Macaulay finite
Iwanaga--Gorenstein algebras, representation-finite self-injective algebras,
and finite NCCR situations.

Finally, Section 5 applies the existence criterion to rigidity dimensions.
Combining Theorem A with the rigidity-degree formulas in
\cite{HuYinRigidityDegrees}, we obtain explicit type \(\mathbb A\) and type
\(\mathbb D\) families of representation-finite self-injective algebras for
which the maximal rigidity degree \(d_{\max}\) is realized by a
\(d_{\max}\)-cluster tilting module; consequently their rigidity dimensions are
\(d_{\max}+2\).  These families are given in
Corollaries \ref{coro-rigdim-type-A-closed-families} and
\ref{coro-rigdim-type-D-closed-families}.

\section{Preliminaries}
Throughout this paper, we fix an algebraically closed field $k$. 

\subsection{$d$-cluster tilting and covering}

{\bf $d$-cluster tilting}.  Let $\T$ be a triangulated category or an exact category and let $d\geq 1$ be an integer. If $\T$ is a triangulated category, we write $\Ext_{\T}^d(-,-)$ for $\Hom_{\T}(-,-[d])$. For a class $\mathcal{M}$ of objects in $\ind\T$, we write 
\[\mathcal{M}^{\perp_d}=\{X\in\ind\T\mid \Ext_{\T}^i(M,X)=0 \mbox{ for all }1\leq i\leq d, M\in\mathcal{M}\}.\]
One can similarly define ${}^{\perp_d}\mathcal{M}$. A class $\mathcal{M}$ of objects in $\ind\T$ is called a {\em $d$-cluster tilting subset} if $\add(\mathcal{M})$ is contravariantly finite in $\T$ and $\mathcal{M}=\mathcal{M}^{\perp_d}={}^{\perp_d}\mathcal{M}$. 
An object in $\T$ is called a {\em $d$-cluster tilting object} if the isomorphism classes of its indecomposable direct summands form a $d$-cluster tilting subset in $\ind\T$.  

Assume that $\T$ is triangulated and  has a Serre functor $\nu$. Then $\T$ has Auslander--Reiten triangles and the AR-translation $\tau$ is given by $\tau = \nu \circ [-1]$. For each $d\geq 1$, we have 
\(\Hom_{\T}(X,Y[d+1-i])\simeq D\Hom_{\T}(Y[d+1-i],\nu X)\simeq D\Hom_{\T}(Y,\tau X[i-d])\) for all $X,Y\in\T$. Set $\tau_{d+1}=\tau\circ[-d]$. Then the above isomorphism can be rewritten as \(\Hom_{\T}(X,Y[d+1-i])\simeq D\Hom_{\T}(Y,\tau_{d+1} X[i])\) for all $X,Y\in\T$. It follows that 
\(\tau_{d+1}\left({}^{\perp_d}\mathcal{M}\right)=\mathcal{M}^{\perp_d}.\)
for any $\mathcal{M}\subseteq\ind\T$. Thus a $d$-cluster tilting subset $\mathcal{M}$ in $\ind\T$ satisfies $\tau_{d+1}(\mathcal{M})=\mathcal{M}$. 

\medskip 
{\bf Covering functors}. We use covering functors in the sense introduced by
Bongartz and Gabriel \cite{BongartzGabriel1982}.  Let $\mathcal{A}$ and
$\mathcal{B}$ be Hom-finite $k$-linear categories, and let
$F:\mathcal{A}\lra \mathcal{B}$ be a $k$-linear functor which induces a
covering functor $F:\ind\mathcal{A}\lra\ind\mathcal{B}$, that is,  
  the canonical morphisms
\[\bigoplus_{Fy=Y}\Hom_{\mathcal{A}}(x,y)\lra \Hom_{\mathcal{B}}(Fx,Y),\quad \bigoplus_{Fx=X}\Hom_{\mathcal{A}}(x,y)\lra\Hom_{\mathcal{B}}(X,Fy)\]
induced by $F$ are bijections. The covering functor always gives rise to a push-down functor 
\(F_{\lambda}: \Modcat{\mathcal{A}}\lra\Modcat{\mathcal{B}}\) which is exact and left adjoint to the pull-up functor $F_*$ sending $M$ to $M\circ F$. Moreover, there is a natural isomorphism 
\(F_{\lambda}\Hom_{\mathcal{A}}(-,x)\lra \Hom_{\mathcal{B}}(-,Fx)\)
for all $x\in \mathcal{A}$.
\begin{Lem}\label{lem-covering-approx}
Keep the notations above. Let $f: x\lra y$ be a morphism in $\mathcal{A}$, and let $U\in\ind\mathcal{B}$. Then $\Hom_{\mathcal{A}}(u, f)$ (respectively, $\Hom_{\mathcal{A}}(f, u)$) is surjective for all $u\in F^{-1}(U)$ if and only if $\Hom_{\mathcal{B}}(U, Ff)$ (respectively, $\Hom_{\mathcal{B}}(Ff, U)$) is surjective.
\end{Lem}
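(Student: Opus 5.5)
The plan is to use the covering bijections directly and check that they are natural in the second variable, so that $\Hom_{\mathcal{B}}(U,Ff)$ becomes a direct sum of the maps $\Hom_{\mathcal{A}}(u,f)$. Fix $u_0\in F^{-1}(U)$. Since $F$ is a covering on indecomposables, every object of $F^{-1}(U)$ should be of the form $gu_0$ for $g$ in the group acting on the fibres. We will only need that $F^{-1}(U)$ is the index set in the covering bijections.

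\textbf{Step 1: the covering isomorphism as a commutative square.} Use the second canonical bijection with $X=U$:
\[\bigoplus_{Fu=U}\Hom_{\mathcal{A}}(u,x)\lra\Hom_{\mathcal{B}}(U,Fx).\]
This is natural in $x$, because each component is just application of $F$, and $F(f\circ a)=Ff\circ Fa$. So for $f\colon x\to y$ the square
\[\begin{array}{ccc}\bigoplus_{Fu=U}\Hom_{\mathcal{A}}(u,x) & \lra & \Hom_{\mathcal{B}}(U,Fx)\\ \downarrow & & \downarrow\\ \bigoplus_{Fu=U}\Hom_{\mathcal{A}}(u,y) & \lra & \Hom_{\mathcal{B}}(U,Fy)\end{array}\]
commutes. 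Here the left vertical map is $\bigoplus_u\Hom_{\mathcal{A}}(u,f)$ and the right vertical map is $\Hom_{\mathcal{B}}(U,Ff)$. The horizontal maps are bijections by the covering hypothesis, applied to $x$ and to $y$. If $x$ or $y$ is not indecomposable, first decompose it into indecomposables; both sides are additive, so the bijection extends to finite direct sums.

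\textbf{Step 2: compare surjectivity.} Since the horizontal maps are bijections, $\Hom_{\mathcal{B}}(U,Ff)$ is surjective if and only if $\bigoplus_u\Hom_{\mathcal{A}}(u,f)$ is surjective. A direct sum of linear maps is surjective if and only if each summand is surjective. Hence this holds if and only if $\Hom_{\mathcal{A}}(u,f)$ is surjective for all $u\in F^{-1}(U)$.

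The only point needing care is that the direct sums are genuine direct sums. Hom-finiteness of $\mathcal{B}$ forces $\Hom_{\mathcal{A}}(u,x)\neq0$ for only finitely many $u$ in the fibre, so there is no issue with products versus sums.

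\textbf{Step 3: the dual statement.} Use the first bijection with $Y=U$:
\[\bigoplus_{Fu=U}\Hom_{\mathcal{A}}(x,u)\cong\Hom_{\mathcal{B}}(Fx,U).\]
It is natural in $x$ contravariantly. The same argument applied to $\Hom_{\mathcal{A}}(f,u)$ and $\Hom_{\mathcal{B}}(Ff,U)$ gives the claim.

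\textbf{Main obstacle.} I expect the main obstacle to be bookkeeping rather than substance. One must confirm that the canonical maps are indeed natural in the varying argument. One must also handle decomposable $x,y$ by additivity, since the covering condition is stated on $\ind\mathcal{A}$.
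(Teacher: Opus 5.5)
Your proposal is correct and follows the paper's proof. The paper uses exactly this commutative square, in which the covering bijections identify $\bigoplus_{Fu=U}\Hom_{\mathcal{A}}(u,f)$ with $\Hom_{\mathcal{B}}(U,Ff)$, and it treats the dual case the same way. Your remarks on componentwise surjectivity of a direct sum and on extending the bijections additively to decomposable $x,y$ only make explicit what the paper leaves implicit.
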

\begin{proof}
We have the following commutative diagram.
\[\xymatrix@C=20mm{
\bigoplus\limits_{Fu=U}\Hom_{\cal A}(u,x) \ar[r]^{\bigoplus\Hom_{\cal A}(u,f)} \ar[d]  & \bigoplus\limits_{Fu=U}\Hom_{\cal A}(u,y) \ar[d] \\
\Hom_{\cal B}(U, Fx) \ar[r]^{\Hom_{\cal B}(U, Ff)} & \Hom_{\cal B}(U, Fy)
}\] 
Since $F:\ind\mathcal{A}\lra \ind\mathcal{B}$ is a covering functor, the vertical arrows are isomorphisms. Thus, the horizontal arrows are surjective if and only if the corresponding arrows in the bottom row are surjective. The proof for the case of $\Hom_{\mathcal{A}}(f, u)$ is similar.
\end{proof}
Now we assume   that $\mathcal{A}$ and $\mathcal{B}$ are triangulated categories, the covering functor $F$  commutes with $[1]$, that is, $F\circ [1]\simeq [1]\circ F$. It is not known whether $F$ is a triangle functor. Nevertheless, we have the following lemma. 
\begin{Lem}\label{lem-covering-triangle}
Let $F:\mathcal{A}\lra \mathcal{B}$ be a $k$-linear functor between two Hom-finite $k$-linear triangulated categories such that $F$ commutes with $[1]$ and that $F:\ind\mathcal{A}\lra \ind\mathcal{B}$ is a covering functor. Then for each triangle $x\lraf{f} y\lraf{g} z\lraf{h} x[1]$ in $\mathcal{A}$,  
there is a triangle $Fx\lraf{Ff} Fy\lra Fz\lra Fx[1]$ in $\mathcal{B}$.
\end{Lem}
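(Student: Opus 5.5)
The plan is to complete $Ff$ to a triangle in $\mathcal{B}$ and then identify the third term with $Fz$ using the covering property, via Lemma \ref{lem-covering-approx} and exactness of the push-down functor on representable functors.

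First, since $\mathcal{B}$ is triangulated, choose a triangle $Fx\lraf{Ff} Fy\lraf{g'} w\lraf{h'} Fx[1]$ in $\mathcal{B}$. It suffices to construct an isomorphism $w\simeq Fz$; I would compare the long exact sequences of Hom functors.

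Consider, in $\Modcat{\mathcal{A}}$ (modules as contravariant functors on $\mathcal{A}$ or on $\ind\mathcal{A}$), the exact sequence
\[\Hom_{\mathcal{A}}(-,x)\lra\Hom_{\mathcal{A}}(-,y)\lra\Hom_{\mathcal{A}}(-,z)\lra\Hom_{\mathcal{A}}(-,x[1])\lra\Hom_{\mathcal{A}}(-,y[1]).\]
Apply the exact functor $F_\lambda$. Using $F_\lambda\Hom_{\mathcal{A}}(-,a)\simeq\Hom_{\mathcal{B}}(-,Fa)$ and $F[1]\simeq[1]F$, we obtain an exact sequence
\[\Hom_{\mathcal{B}}(-,Fx)\lra\Hom_{\mathcal{B}}(-,Fy)\lraf{(-,\bar g)}\Hom_{\mathcal{B}}(-,Fz)\lraf{(-,\bar h)}\Hom_{\mathcal{B}}(-,Fx[1])\lra\Hom_{\mathcal{B}}(-,Fy[1]).\]
By Yoneda, these maps are induced by morphisms $\bar g:=Fg$ and $\bar h:Fz\to Fx[1]$, where $\bar h$ is $Fh$ composed with the isomorphism $F(x[1])\simeq Fx[1]$. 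The first and last maps are $\Hom(-,Ff)$ and $\Hom(-,Ff[1])$ up to these identifications, though naturality of $F[1]\simeq[1]F$ is needed to confirm this. Evaluating on indecomposables suffices, since the categories are Krull--Schmidt; exactness on $\ind\mathcal{B}$ can alternatively be checked directly from the covering bijections, just as in the proof of Lemma \ref{lem-covering-approx}.

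The triangle $(Ff,g',h')$ in $\mathcal{B}$ gives the analogous exact sequence with $w$ in place of $Fz$. Because $\Hom(Fy,g)$ composed with $Ff$ vanishes, $\bar g\circ Ff=0$ (indeed $gf=0$ already gives $F(g)F(f)=0$). Hence $\bar g$ factors through $g'$ by the weak cokernel property of triangles: there is $\varphi:w\to Fz$ with $\varphi g'=\bar g$. One also needs $\bar h\varphi=h'$, or at least a compatible choice; I would choose $\varphi$ so that this holds by using exactness at $\Hom(w,Fx[1])$, adjusting $\varphi$ by a map factoring through $h'$. Then $\Hom_{\mathcal{B}}(-,\varphi)$ sits in a morphism between the two five-term exact sequences, with identities on the four outer terms. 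By the five lemma, $\Hom_{\mathcal{B}}(U,\varphi)$ is bijective for all $U$, and Yoneda gives that $\varphi$ is an isomorphism. Therefore $Fx\to Fy\to Fz\to Fx[1]$ is isomorphic to a triangle, hence is a triangle, with first map $Ff$.

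The main obstacle is arranging commutativity of the square involving $\bar h$ and $h'$. The naive factorization $\varphi$ only guarantees $\varphi g'=\bar g$; the difference $\bar h\varphi-h'$ vanishes on $g'$, so it factors through $h'$ as $\psi h'$ for some $\psi:Fx[1]\to Fx[1]$. I would try to avoid the issue by applying the five lemma in its weak form, which needs only the squares to commute up to the outer maps being isomorphisms. Otherwise I would use that $\Hom_{\mathcal{B}}(-,\varphi)$ is a map of functors making the left squares commute, and verify injectivity and surjectivity with the four-lemma halves separately, where the right square is needed only up to an automorphism of $Fx[1]$ commuting appropriately.
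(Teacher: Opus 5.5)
Your setup is fine up to the choice of $\varphi\colon w\to Fz$ with $\varphi g'=\bar g$. The argument then breaks at exactly the step you flag, and neither repair you suggest closes it.

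\emph{The adjustment does not work.} Replacing $\varphi$ by $\varphi+\chi h'$ turns $\bar h\varphi=(1+\psi)h'$ into $\bar h\varphi'=(1+\psi+\bar h\chi)h'$. So you would need $-\psi$ to lie in the image of $\bar h\circ-$, modulo maps killed by $h'$. By exactness of the pushed-down sequence, this is a condition on $(Ff)[1]\circ\psi$ that nothing in your data supplies; you only know $(Ff)[1]\circ\psi\circ h'=0$.

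\emph{There is no weak five lemma of the kind you invoke.} With $1+\psi$ in the fourth slot, you would need $1+\psi$ to be an automorphism, together with a compatible vertical map on $Fy[1]$. You have neither.

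\emph{The third square cannot be dropped.} A $\varphi$ with $\varphi g'=\bar g$ need not be invertible: for the triangle $X\xrightarrow{0}Y\to Y\oplus X[1]\to X[1]$, the idempotent $\mathrm{diag}(1,0)$ fixes the second map.

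The underlying problem is that a $\varphi$ making all squares commute, with identities on the outer terms, would show that $Fx\xrightarrow{Ff}Fy\xrightarrow{Fg}Fz\xrightarrow{\bar h}Fx[1]$ is itself distinguished, i.e.\ that $F$ preserves triangles. That is strictly stronger than the lemma, which only asserts that some triangle with first map $Ff$ has third term $Fz$. The paper explicitly says this stronger property is not known. It also cannot follow from exactness of Hom sequences alone. In $\mathcal D(\mathbb Z)$, negate the connecting map of the standard triangle to get $\mathbb Z\xrightarrow{3}\mathbb Z\to\mathbb Z/3\xrightarrow{-w}\mathbb Z[1]$. This has exact Hom sequences in both variables, but it is not distinguished: TR3 would give $c$ with $c\pi=\pi$ and $w=-wc$, forcing $2w=0$ in $\mathbb Z/3$.

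The fix is to compare objects rather than triangles. Both $\Hom_{\mathcal B}(-,w)$ and $\Hom_{\mathcal B}(-,Fz)$ are projective in $\Modcat{(\ind\mathcal B)}$. Both map onto $K=\Ker(-,(Ff)[1])$, and in both cases the kernel is the cokernel of $(-,Ff)$.

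The paper first discards trivial summands of the original triangle, so that $f$ and $g$ are radical. It then observes that both maps onto $K$ are projective covers. Hence $\Hom_{\mathcal B}(-,w)\simeq\Hom_{\mathcal B}(-,Fz)$, so $w\simeq Fz$ by Yoneda. Transporting $(Ff,g',h')$ along this isomorphism gives the required triangle; its second and third maps need not be $Fg$ and $\bar h$.

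If you prefer to avoid the minimality reduction, split a projective cover $P_0\to K$ off both functors. Then cancel the remaining summands of the common kernel in the Krull--Schmidt category of finitely presented functors.
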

\begin{proof}
Clearly we can assume that the triangle has no trivial triangles as direct summands, that is, both $f$ and $g$ are radical morphisms. The triangle induces a long exact sequence in $\Modcat{(\ind\mathcal{A})}$. Applying the push-down functor $F_{\lambda}$, which is exact, we get a long exact sequence in $\Modcat{(\ind\mathcal{B})}$. 
\[\xymatrix@C=13mm{
F_{\lambda}\Hom_{\cal A}(-,y)\ar[r]\ar[d] & F_{\lambda}\Hom_{\cal A}(-,z)\ar[r]\ar[d] & F_{\lambda}\Hom_{\cal A}(-,x[1])\ar[r]\ar[d] & F_{\lambda}\Hom_{\cal A}(-,y[1])\ar[d]\\
\Hom_{\cal B}(-,Fy)\ar[r]^{(-,Fg)} & \Hom_{\cal B}(-,Fz)\ar[r] & \Hom_{\cal B}(-,(Fx)[1])\ar[r]^{(-,(Ff)[1])} & \Hom_{\cal B}(-,(Fy)[1])\\
}\]
The vertical maps are induced by the natural isomorphisms $F_{\lambda}\Hom_{\cal A}(-,-)\simeq \Hom_{\cal B}(-,F(-))$ and $F\circ  [1]\simeq [1]\circ F$. 

Extend $Fx\lraf{Ff} Fy$ into a triangle $Fx\lraf{Ff} Fy\lraf{\eta} Z\lra (Fx)[1]$. It suffices to prove that $Z$ is isomorphic to $Fz$. The triangle then induces a long exact sequence in $\Modcat{(\ind{\cal B})}$.
\[\Hom_{\cal B}(-, Fx)\lraf{(-,Ff)}\Hom_{\cal B}(-,Fy)\lraf{(-,\eta)}\Hom_{\cal B}(-, Z)\lra \Hom_{\cal B}(-,(Fx)[1])\lraf{(-,(Ff)[1])}\cdots\] 
Combine the above two long exact sequences, both $\Hom_{\cal B}(-, Fz)$ and $\Hom_{\cal B}(-, Z)$ are projective covers of $\Ker (-,(Ff)[1])$ so that they are isomorphic. The Yoneda lemma then tells us that $Fz\simeq Z$. This finishes the proof. 
\end{proof}

The following lemma describes $d$-cluster tilting subsets under covering functors. 

\begin{Lem}\label{lem-covering-dct}
Suppose that $\mathcal{C}$ and $\T$ are Hom-finite $k$-linear triangulated categories and that $F:\mathcal{C}\lra\T$ is a $k$-linear functor such that $F\circ [1]\simeq [1]\circ F$ and that $F$ induces a covering from $\ind\mathcal{C}$ to $\ind\T$.  Then for each $d\geq 1$, a subset $\mathcal{M}\subseteq\ind\T$ is a $d$-cluster tilting subset in $\ind\T$ if and only if $F^{-1}(\mathcal{M})$ is a $d$-cluster tilting subset in $\ind\mathcal{C}$.
\end{Lem}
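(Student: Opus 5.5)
The plan is to check the two defining conditions of a $d$-cluster tilting subset separately: the perpendicular-category equalities and contravariant finiteness. Both will be transported across $F$ using the covering bijections, and the key input is that, by the covering property applied to $X$ and $Y[i]$ together with $F\circ[i]\simeq[i]\circ F$,
\[
\bigoplus_{Fy=Y}\Hom_{\mathcal C}(x,y[i])\;\simeq\;\Hom_{\T}(Fx,Y[i])
\quad\text{and}\quad
\bigoplus_{Fx=X}\Hom_{\mathcal C}(x,y[i])\;\simeq\;\Hom_{\T}(X,Fy[i]).
\]
Here one has to be careful that $F^{-1}(Y[i])=\{y[i]: Fy=Y\}$, which holds because $[i]$ is an autoequivalence compatible with $F$ on isoclasses. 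Write $\widetilde{\mathcal M}=F^{-1}(\mathcal M)$.

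First I would prove that, for $Y\in\ind\T$ and any $y$ with $Fy=Y$, we have $Y\in\mathcal M^{\perp_d}$ if and only if $y\in\widetilde{\mathcal M}^{\perp_d}$, and dually for ${}^{\perp_d}$. Indeed, $\Ext^i_{\T}(M,Y)=0$ for all $M\in\mathcal M$ is equivalent to $\Hom_{\mathcal C}(m,y'[i])=0$ for all $m\in\widetilde{\mathcal M}$ and all $y'$ in the fibre of $Y$. The fibre of $Y$ is one "orbit", and the condition is symmetric in it. More precisely, using the second bijection with $X=M$, the condition becomes $\Hom_{\mathcal C}(m,y[i])=0$ for all $m\in F^{-1}(\mathcal M)$ at the fixed lift $y$. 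This shows $F^{-1}(\mathcal M^{\perp_d})=\widetilde{\mathcal M}^{\perp_d}$, and similarly $F^{-1}({}^{\perp_d}\mathcal M)={}^{\perp_d}\widetilde{\mathcal M}$. Since $F:\ind\mathcal C\to\ind\T$ is surjective on objects (implicit in being a covering in the Bongartz–Gabriel sense; I would note this), taking preimages is injective on subsets of $\ind\T$. Hence $\mathcal M=\mathcal M^{\perp_d}={}^{\perp_d}\mathcal M$ if and only if $\widetilde{\mathcal M}=\widetilde{\mathcal M}^{\perp_d}={}^{\perp_d}\widetilde{\mathcal M}$.

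Second, I would handle contravariant finiteness, which I expect to be the main obstacle, since it is a non-pointwise condition and fibres may be infinite. For the direction from $\mathcal C$ to $\T$: given $X\in\ind\T$, lift it to $x$, take a right $\add\widetilde{\mathcal M}$-approximation $f:m\to x$, and apply Lemma~\ref{lem-covering-approx}. Surjectivity of $\Hom_{\mathcal C}(u,f)$ for every $u\in F^{-1}(U)$, $U\in\mathcal M$, gives surjectivity of $\Hom_{\T}(U,Ff)$, so $Ff$ is a right $\add\mathcal M$-approximation of $X$.

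For the converse, given $x$, take a right $\add\mathcal M$-approximation $g:M\to Fx$ in $\T$. By the second covering bijection, each component of $g$ on an indecomposable summand $M_j$ lifts to a finite sum of morphisms $m_{j,l}\to x$ with $Fm_{j,l}=M_j$. Assemble these into $f:m\to x$ with $m\in\add\widetilde{\mathcal M}$; then $Ff$ factors $g$ through the fold map, and $\Hom_{\T}(U,Ff)$ is surjective for all $U\in\mathcal M$. Lemma~\ref{lem-covering-approx} then gives that $f$ is a right $\add\widetilde{\mathcal M}$-approximation.

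The delicate point is making sure $Ff$ really dominates $g$ and that Lemma~\ref{lem-covering-approx} applies to non-indecomposable $x$ and $m$. I would reduce to indecomposable $x$ by additivity and use the first bijection applied to each $U$.
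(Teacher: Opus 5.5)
Your proposal is correct and follows the paper's route. The paper's proof likewise transports the vanishing of $\Hom(-,-[i])$, $1\leq i\leq d$, through the two covering bijections. Your per-lift identities $F^{-1}(\mathcal M^{\perp_d})=F^{-1}(\mathcal M)^{\perp_d}$ and $F^{-1}({}^{\perp_d}\mathcal M)={}^{\perp_d}F^{-1}(\mathcal M)$ are exactly what the paper compresses into ``the lemma follows easily''.

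You also go beyond the paper in two ways, and both points are sound. First, you note that surjectivity of $F$ on indecomposables is genuinely needed for the ``if'' direction. Second, you check contravariant finiteness in both directions via Lemma~\ref{lem-covering-approx}, which the paper never mentions. On the phrase ``factors $g$ through the fold map'': the precise relation is $g=Ff\circ\iota$, where $\iota\colon \bigoplus_j M_j\to\bigoplus_{j,l}M_j$ is the diagonal map. This relation is what makes every morphism $U\to Fx$ that factors through $g$ also factor through $Ff$.
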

\begin{proof}
For each $x,y\in \ind\mathcal{C}$, since $F$ is a covering functor, we have 
\[\Hom_{\T}(Fx, (Fy)[i])\simeq\Hom_{\T}(Fx,F(y[i]))=\bigoplus_{Fz=Fy}\Hom_{\mathcal{C}}(x,z[i]).\]
Thus $Fy\in (Fx)^{\perp_d}$ if and only if $z\in x^{\perp_d}$ for all $z\in\ind\mathcal{C}$ with $Fz=Fy$. Similarly, one has $Fx\in {}^{\perp_d}Fy$ if and only if $z\in {}^{\perp_d}y$ for all $z\in\ind\mathcal{C}$ with $Fz=Fx$. Thus, for $X,Y\in\ind\T$, we have $\Hom_{\T}(X,Y[i])=0$ for all $1\leq i\leq d$ if and only if $\Hom_{\mathcal{C}}(x,y[i])=0$ for all $1\leq i\leq d$ and for all $x\in F^{-1}(X)$ and $y\in F^{-1}(Y)$. Using this fact, the lemma follows easily. 
\end{proof}

\subsection{Locally finite triangulated categories}
\label{subsection-locally-finite}

Given a Dynkin diagram $\Delta$ of type $ADE$, we label the vertices of $\Delta$ and fix an orientation as follows:
$$\xymatrix@=1.5em{
& \mathbb{A}_n: \quad 1 \ar[r] & 2 \ar[r]&  \cdots \ar[r] &  \ar[r] n-1 &  n & n\geq 1
}$$
$$\xymatrix@=1.5em{
& & & &  & n-1\\
& \mathbb{D}_n: \quad 1 \ar[r] & 2 \ar[r]&  \cdots \ar[r] &  n-2\ar[ru]\ar[rd]  & & n\geq 4\\
& & & & &  n
}$$
$$\xymatrix@=1.5em{
& & &   4 \\
&  \mathbb{E}_n: \quad  1 & 2  \ar[l] & 3 \ar[l]\ar[r]\ar[u] &  5 \ar[r] &  \cdots \ar[r] & n & n=6,7,8
}$$
One can construct a translation quiver $\mathbb{Z}\Delta$ as follows: the vertices of $\mathbb{Z}\Delta$ are pairs $(i, p)$ where $i\in \mathbb{Z}$ and $p$ is a vertex of $\Delta$. There is an arrow from $(i, p)$ to $(i, q)$ if there is an arrow from $p$ to $q$ in $\Delta$, and there is an arrow from $(i, p)$ to $(i + 1, q)$ if  there is an arrow from $q$ to $p$. The translation $\tau$ on $\mathbb{Z}\Delta$ is defined by $\tau(i, p) = (i - 1, p)$.

The mesh category $k(\mathbb{Z}\Delta)$ of $\mathbb{Z}\Delta$ is defined as follows: the objects of $k(\mathbb{Z}\Delta)$ are the vertices of $\mathbb{Z}\Delta$, and the morphisms are generated by the arrows of $\mathbb{Z}\Delta$ subject to the mesh relations. It is well-known that $k(\mathbb{Z}\Delta)$ is equivalent to $\ind \Db[k\Delta]$, where $\Db[k\Delta]$ is the bounded derived category of the path algebra $k\Delta$ and $\ind\Db[k\Delta]$ is the full subcategory of $\Db[k\Delta]$ consisting of one representative from each isomorphism class of indecomposable objects (see, for example, \cite{Happel1988aa}). In particular, the AR-quiver of $\Db[k\Delta]$ is isomorphic to $\mathbb{Z}\Delta$. The shift functor $[1]$ on $\Db[k\Delta]$ induces an automorphism $S$ of $\mathbb{Z}\Delta$. We list the automorphism $S$ for each type of $\Delta$ as follows(see, for example, \cite[2.2]{Amiot2007}):
\begin{itemize}
	\item[--] if $\Delta = \mathbb{A}_n$, then $S(i, p) = (i + p, n + 1 - p)$; 
	\item[--] if $\Delta = \mathbb{D}_n$ with $n$ even, then $S = \tau^{-n+1}$; 
	\item[--] if $\Delta = \mathbb{D}_n$ with $n$ odd, then $S = \tau^{-n+1}\phi$ where $\phi$ is the automorphism of $\mathbb{D}_n$ which exchanges $n$ and $n - 1$; 
	\item[--] if $\Delta = \mathbb{E}_6$, then $S = \phi\tau^{-6}$ where $\phi$ is the automorphism of $\mathbb{E}_6$ which exchanges $2$ and $5$, and $1$ and $6$; 
	\item[--] if $\Delta = \mathbb{E}_7$, then $S = \tau^{-9}$; 
	\item[--] if $\Delta = \mathbb{E}_8$, then $S = \tau^{-15}$.
\end{itemize}
For each vertex $x\in\mathbb{Z}\Delta$, we denote by $x^+$ the set of vertices $y$ such that there is an arrow from $x$ to $y$, and we denote by $x^-$ the set of vertices $y$ such that there is an arrow from $y$ to $x$.  An automorphism $G$ of $\mathbb{Z}\Delta$ is said to be {\em weakly admissible} \cite{Dieterich1987} if, for each $g \in G \backslash \{1\}$ and for each $x \in \mathbb{Z}\Delta$, we have $x^+ \cap (gx)^+ = \emptyset$.

\medskip 
{\bf Locally finite triangulated categories}. Let $\T$ be a Krull--Schmidt  triangulated $k$-category. $\T$ is called {\em locally finite} if for each indecomposable object $X$ of $\T$, there are only finitely many isomorphism classes of indecomposable objects $Y$ such that $\Hom_{\T}(X, Y) \neq 0$. Note that if $\T$ is a locally finite triangulated category, then $\T$ has Auslander--Reiten triangles \cite{Xiao2002},
and the AR-quiver of $\T$ is a disjoint union of connected components, each of which is of the form $\mathbb{Z}\Delta/G$, where $\Delta$ is a Dynkin tree and $G$ is a weakly admissible group of automorphisms of $\mathbb{Z}\Delta$ (see \cite{Xiao2005,Amiot2007}). All possible types of the AR-quiver of $\T$ are listed in the following theorem. 
\begin{Theo}[{\cite[Theorem 2.1]{Amiot2007}}]
\label{thm-locally-finite-classification}
Let $\Delta$ be a Dynkin tree and $G$ a non-trivial group of weakly admissible  automorphisms of $\mathbb{Z}\Delta$. Then $G$ is cyclic of infinite order and here is a list of its possible generators:
\begin{itemize}
    \item if $\Delta=\mathbb{A}_n$ with n odd, possible generators are $\tau^{r}$ and $\phi \tau^{r}$ with $r\geq 1$, where $\phi=\tau^{\frac{1}{2}(n+1)}S$ is an automorphism of $\mathbb{Z}\Delta$ of order $2$ ;
    \item if $\Delta=\mathbb{A}_n$ with n even, then possible generators are $\rho^r$, where $r\geq 1$ and where $\rho=\tau^{\frac{1}{2}n}S$;
    \item if $\Delta=\mathbb{D}_n$ with $n\geq 5$, then possible generators are $\tau^{r}$ and $ \tau^{r}\phi$ where $r\geq 1$ and where $\phi=(n-1,n)$ is the automorphism of $\mathbb{D}_n$ exchanging $n$ and $n-1$;
    \item if \(\Delta = \mathbb{D}_4\), then possible generators are \(\phi \tau^r\), where \(r \geq 1\) and where \(\phi\) belongs to \(\mathcal{S}_3\) the permutation group on three elements seen as subgroup of automorphisms of \(\mathbb{D}_4\);
\item if \(\Delta = \mathbb{E}_6\), then possible generators are \(\tau^r\) and \(\phi \tau^r\), where \(r \geq 1\) and where \(\phi\) is the automorphism of \(\mathbb{E}_6\) exchanging $2$ and $5$, and $1$ and $6$;
\item if \(\Delta = \mathbb{E}_n\) with \(n = 7, 8\), possible generators are \(\tau^r\), where \(r \geq 1\).
\end{itemize}
\end{Theo}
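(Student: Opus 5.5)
The statement is Amiot's classification \cite[Theorem 2.1]{Amiot2007}, and I would follow her strategy. First reduce to automorphisms of the translation quiver $\mathbb{Z}\Delta$, then use weak admissibility to rule out torsion and force cyclicity.

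\textbf{Step 1: the automorphism group.} Every automorphism $g$ of $\mathbb{Z}\Delta$ commutes with $\tau$. For a Dynkin tree $\Delta$, $g$ is determined by where it sends one section, up to a power of $\tau$. I would show that $\Aut(\mathbb{Z}\Delta)$ is generated by $\tau$ together with the automorphisms induced by diagram automorphisms of $\Delta$. In type $\mathbb{A}_n$ one also needs the "reflection" coming from $S$; this is recorded by $\phi=\tau^{\frac12(n+1)}S$ for $n$ odd and by $\rho=\tau^{\frac12 n}S$ for $n$ even, where $\rho^2=\tau$ up to sign conventions.

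From this the group is explicit. It is $\mathbb{Z}\times \Aut(\Delta)$ in the types $\mathbb{D}_n$ ($n\ge5$), $\mathbb{D}_4$ (with $\mathcal{S}_3$), $\mathbb{E}_6$ (with $\mathbb{Z}/2$), and $\mathbb{A}_n$ with $n$ odd. It is $\mathbb{Z}$ for $\mathbb{E}_7$, $\mathbb{E}_8$, and for $\mathbb{A}_n$ with $n$ even, where it is generated by $\rho$. In each case every element is $\phi\tau^r$ with $\phi$ of finite order and $r\in\mathbb{Z}$, or $\rho^r$ in the even-$\mathbb{A}$ case. The commutation relations are read off from the formulas for $S$ listed above.

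\textbf{Step 2: weak admissibility kills torsion.} Suppose $g\neq 1$ has finite order. Then $g$ is, up to the identity part, a nontrivial diagram automorphism that preserves each $\tau$-orbit layer. Choose a vertex $x=(i,p)$ where $p$ is fixed by $g$ and adjacent to moved vertices, for instance the branch vertex $n-2$ in $\mathbb{D}_n$, or $3$ in $\mathbb{E}_6$. For $\mathbb{A}_n$ with $n$ odd, $\phi$ fixes the middle vertex up to its $\tau$-shift. In each such case $x^+\cap(gx)^+\neq\emptyset$, since $gx=x$ or $gx$ shares a successor with $x$. Hence a weakly admissible $G$ is torsion-free.

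Next I would check which elements $\phi\tau^r$ are weakly admissible. The condition $x^+\cap(gx)^+=\emptyset$ fails exactly when $gx$ equals $x$ or is a mesh-neighbour at the same level. This forces $r\neq 0$, and by replacing $g$ with $g^{-1}$ we may take $r\ge1$. In type $\mathbb{A}_n$ with $n$ even, $\rho$ itself is weakly admissible. A short direct check on meshes verifies these cases.

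\textbf{Step 3: cyclicity.} Torsion-freeness gives an injective homomorphism $G\to\mathbb{Z}$, defined by $\phi\tau^r\mapsto r$ (respectively $\rho^r\mapsto r$). Its kernel consists of finite-order elements, so it is trivial. Therefore $G$ embeds in $\mathbb{Z}$ and is infinite cyclic. Its generator has the form $\phi\tau^r$ with $r\ge1$ and $\phi$ as listed, which gives the stated list.

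I expect Step 1 to be the main obstacle. Showing that every automorphism of $\mathbb{Z}\Delta$ has this form, in particular the correct identification of the extra $\mathbb{A}_n$ symmetry via $S$ and the parity split, needs a careful combinatorial argument on sections. The remaining steps are finite checks on meshes.
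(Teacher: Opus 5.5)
The paper does not prove this statement itself. It quotes it from Amiot, where it rests on Riedtmann's description of $\Aut(\mathbb{Z}\Delta)$, and your outline is that standard argument and is correct: $\langle\tau\rangle$ is central with finite quotient given by the graph automorphisms of $\Delta$, non-split only for $\mathbb{A}_{2m}$ via $\rho$; every non-trivial torsion element fixes a whole $\tau$-orbit pointwise and so violates weak admissibility; hence $G$ embeds in $\mathbb{Z}$.

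For Step 1, the cleanest route is the induced action on the orbit graph $\mathbb{Z}\Delta/\langle\tau\rangle$, the underlying tree of $\Delta$. Its kernel is $\langle\tau\rangle$ by connectedness, and it is surjective because $\mathbb{Z}\Delta$ does not depend on the orientation. This also makes clear that, with the paper's fixed orientation of $\mathbb{D}_4$, the lifts of the permutations in $\mathcal{S}_3$ moving vertex $1$ need an index shift, just as $\phi$ does in type $\mathbb{A}$.
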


\subsection{$d$-cluster categories and geometric models}
\label{subsec-geometric-model}

Given a hereditary algebra $H$, the $d$-cluster category $\mathcal{C}_d(H)$ associated to $H$ is the orbit category $\Db[H]/\langle\tau_{d+1}\rangle$, where $\tau_{d+1}=\tau\circ[-d]$. $\mathcal{C}_d(H)$ is triangulated and the canonical functor \(\pi:\Db[H]\lra\mathcal{C}_d(H)\) is a triangle functor (see \cite{Keller2005ab}).  Baur and Marsh established geometric models for $d$-cluster categories of  types  $\mathbb{A}$ and $\mathbb{D}$ in  \cite{Baur2007,Baur2008}.

For $H=k\mathbb{A}_{n}$,  set \(N=(n+1)d+2\) and consider the regular $N$-gon $P_N$ whose vertices are labeled clockwise by $0, 1, \ldots, N-1$. Then the indecomposable objects in $\mathcal{C}_d(k\mathbb{A}_n)$ correspond bijectively to those $d$-diagonals of $P_N$, that is, those edges $D_{(x,y)}$ connecting $x$ and $y$ with $y-x>1$ and $y-x-1\in d\mathbb{Z}$. Here we identify $x$ with $x'$ if $x-x'\in N\mathbb{Z}$. Suppose $X\in\mathcal{C}_d(k\mathbb{A}_n)$ is the object corresponding to the $d$-diagonal $D_{(x,y)}$. Then $\tau X$ (respectively, $X[1]$) corresponds to the $d$-diagonal $D_{(x-d,y-d)}$ (respectively, $D_{(x-1,y-1)}$) obtained by rotating $D_{(x,y)}$ around the center by $\frac{2d\pi}{N}$ (respectively, $\frac{2\pi}{N}$) anti-clockwise.

A maximal collection of pairwise non-crossing  $d$-diagonals of $P_{N}$ is called a  $(d+2)$-angulation of $P_{N}$.  Basic $d$-cluster tilting objects in $\mathcal{C}_d(k\mathbb{A}_n)$ correspond bijectively to $(d+2)$-angulations of $P_N$ (see \cite{Thomas2007,Zhu2008}).

The geometric model for type \(\mathbb D\) is as follows.  Put
\(N=(n-1)d+1\)
and let \(P_N^{\circ}\) be a regular \(N\)-gon with one puncture at the centre.
The boundary vertices are labelled by \(0,1,\ldots,N-1\) clockwise. Each integer $i$ corresponds to a boundary vertex $[i]_N$, where $[i]_N$ is the remainder of $i$ modulo $N$. For $i\neq j$, let \(B_{ij}\) denote the clockwise boundary path from \(i\) to \(j\).

\begin{Def}
A \(d\)-arc in \(P_N^{\circ}\) is one of the following. 
\begin{enumerate}
    \item For $i<j<i+N$ with $j-i-1\in d\mathbb{Z}$, the $d$-arc \(D_{ij}\) is the homotopy class, relative to the
endpoints, of the curve from \(i\) to \(j\) homotopic to \(B_{ij}\), not
meeting the boundary except at its endpoints and not passing through the
puncture.
\item For each boundary vertex \(i\), there are two tagged radii
\(D_{ii}^{\epsilon}\) joining \(i\) to the puncture, where $\epsilon\in\{-1,1\}$.
\end{enumerate}
\end{Def}
Here the terminology for tagged $d$-arcs is slightly different from that in Baur and Marsh's original paper. For simplicity, we write tag $1$ and tag $-1$ as $+$ and $-$, respectively; thus \(D_{ii}^{+}=D_{ii}^{1}\) and \(D_{ii}^{-}=D_{ii}^{-1}\).
Baur and Marsh prove that indecomposable objects of the $d$-cluster category $\mathcal{C}_d(k\mathbb{D}_n)$ correspond bijectively to the $d$-arcs. The tagged radii $D_{ii}^{\pm}$ correspond to high vertices on the AR-quiver. Under this bijection, the arrows on the AR-quiver are described as $d$-moves of $d$-arcs. There are $4$ types of $d$-moves, as listed below. 
\[D_{i,j}\rightarrow D_{i,j+d},\quad D_{i,j}\rightarrow D_{i+d,j},\quad D_{i,i+N-d}\rightarrow D_{ii}^{\pm} \mbox{ and }D_{ii}^{\pm}\rightarrow D_{i+d,i+N}\]
The AR-translation is obtained by shifting indices by $-d$, that is, rotating anti-clockwise by $d$ vertices, and changing the tag if $d$ is odd. The shift functor $[1]$ rotates a $d$-arc anti-clockwise by $1$ step, and changes the tag unless both $n$ and $d$ are even. For a $d$-arc $\alpha$, we write $X_{\alpha}$ for the corresponding object in $\mathcal{C}_d(k\mathbb{D}_n)$. The Hom-hammock starting at a $d$-arc $\alpha$ contains precisely those $d$-arcs $\beta$ with $\Hom_{\mathcal{C}_d(k\mathbb{D}_n)}(X_{\alpha},X_{\beta})\neq 0$. The Hom-hammock has the following shape. 
\begin{center}
	\begin{tikzpicture}
\begin{scope}
	\pgfmathsetmacro{\ul}{0.2};
	\draw [-,fill=gray!10] (-8*\ul,0)--(-12*\ul,4*\ul)--(-4*\ul,12*\ul)--(4*\ul,12*\ul)--(12*\ul,4*\ul)--(8*\ul,0*\ul)--(0*\ul,8*\ul)-- cycle;
	\fill[fill=black]  (-8*\ul,0) circle (1pt) node[left] {$\scriptstyle{D_{j-d-1,j}}$};
	\fill[fill=black]  (-12*\ul,4 *\ul) circle (1pt) node[left] {$\scriptstyle{D_{i,j}}$};
	\fill[fill=black]  (-4*\ul,12*\ul) circle (1pt) node[left] {$\scriptstyle{D_{ii}^{\pm}}$};
	\fill[fill=black]  (12*\ul,4*\ul) circle (1pt) node[right] {$\scriptstyle{D_{i-1+N,j-1+N}}$};
	\fill[fill=black]  (4*\ul,12*\ul) circle (1pt) node[right] {$\scriptstyle{D_{j-1}^{\pm}}$};
	\fill[fill=black]  (8*\ul,0*\ul) circle (1pt) node[right] {$\scriptstyle{D_{i+N-1,i+d+N}}$};
	\node at (0*\ul,10*\ul) {$\scriptstyle{ H^+(D_{i,j})}$};
\end{scope}
\begin{scope}[shift={(6,0)}]
    \pgfmathsetmacro{\ul}{0.3};
\draw [-,fill=gray!10] (0*\ul,6*\ul)--(2*\ul,6*\ul)--(3*\ul,5*\ul)--(4*\ul,6*\ul)--(6*\ul,6*\ul)--(7*\ul,5*\ul)--(8*\ul,6*\ul)--(10*\ul,6*\ul)--(11*\ul,5*\ul)--(12*\ul,6*\ul)--(6*\ul,0*\ul)--cycle;
\draw (-1*\ul,7*\ul)--(0*\ul,6*\ul)--(2*\ul,6*\ul)--(3*\ul,7*\ul)--(4*\ul,6*\ul)--(6*\ul,6*\ul)--(7*\ul,7*\ul)--(8*\ul,6*\ul)--(10*\ul,6*\ul)--(11*\ul,7*\ul)--(12*\ul,6*\ul)--(13*\ul,6*\ul);
\fill[fill=black]  (13*\ul,6*\ul) circle (1pt) node[right] {${\scriptstyle D_{i+(n-2)d,i+(n-2)d}^{\epsilon}}$};
\fill[fill=black]  (11*\ul,7*\ul);
\fill[fill=black]  (-1*\ul,7*\ul) circle (1pt) node[left] {${\scriptstyle D_{ii}^{+}}$};
\fill[fill=black]  (6*\ul,0*\ul) circle (1pt) node[right] {${\scriptstyle D_{i+(n-2)d,i+N}}$};
\node at (6*\ul,3*\ul) {${\scriptstyle H^+(D_{ii}^{+})}$}; 
\end{scope}
	\end{tikzpicture}
	\end{center}
The tag $\epsilon$ is   \(+\) when $n$ is even, and is   \(-\) when $n$ is odd.  Two $d$-arcs $\alpha$ and $\beta$  are said to be $d$-orthogonal if $X_{\alpha}\in{}^{\perp_d}X_{\beta}$, or equivalently $X_{\beta}\in{}^{\perp_d}X_{\alpha}$.
\begin{Lem}\label{lem-non-crossing}
Let $\alpha$ and $\beta$ be two $d$-arcs. Then the following hold. 
\begin{enumerate}
    \item If at least one of $\alpha$ and $\beta$ is non-tagged, then $\alpha$ and $\beta$ cross in $P_N^\circ$ if and only if, after possibly interchanging $\alpha$ and $\beta$, there is some integer $0<t\leq d$ such that $\beta[t]$ belongs to $H^+(\alpha)$, that is, $\alpha$ and $\beta$ are not $d$-orthogonal. 
    \item If \(\alpha=D_{ii}^{\epsilon}\) is tagged and \(\beta=\tau^{-t}(D_{ii}^{\epsilon'})\), where \(\epsilon,\epsilon'\in\{1,-1\}\), for some integer \(0<t<N\), then \(\alpha\) and \(\beta\) are \(d\)-orthogonal 
    if and only if 
	    \(\epsilon\epsilon'=(-1)^{t+\left\lceil\frac{t}{n-1}\right\rceil+1}.\)
   \item For $i<j<i+N$, the object  $D_{ii}^{\pm}$ is $d$-orthogonal to precisely one of $D_{jj}^{+}$ and $D_{jj}^{-}$. 
\end{enumerate}
\end{Lem}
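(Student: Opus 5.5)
We will deduce all three parts from the Baur--Marsh description of $\mathcal{C}_d(k\mathbb{D}_n)$: indecomposables are $d$-arcs, $\tau$ is rotation by $-d$ (with a tag change when $d$ is odd), and $[1]$ is rotation by one step (with the stated tag rule). We will also use the Hom-hammock pictures $H^+(D_{ij})$ and $H^+(D_{ii}^{+})$. The basic criterion is this: $\alpha,\beta$ are $d$-orthogonal iff $\Hom(X_\alpha,X_\beta[t])=0$ and $\Hom(X_\beta,X_\alpha[t])=0$ for $1\le t\le d$. Via the Serre duality identity $\Hom(X,Y[d+1-i])\simeq D\Hom(Y,\tau_{d+1}X[i])$ from the preliminaries, together with $\tau_{d+1}\simeq \mathrm{id}$ in $\mathcal{C}_d$, the two conditions are equivalent. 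So orthogonality reduces to checking whether some $\beta[t]$, $0<t\le d$, lies in $H^+(\alpha)$.

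\textbf{Part (1).} Take $\alpha=D_{ij}$ non-tagged; if only $\beta$ is non-tagged, interchange the two arcs, which is allowed by the symmetry just noted.

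1. Read off the hammock $H^+(D_{ij})$ concretely as a set of arcs. Its boundary vertices are $D_{j-d-1,j}$, $D_{ij}$, $D_{ii}^\pm$, $D_{j-1,j-1}^\pm$, $D_{i-1+N,j-1+N}$ and $D_{i+N-1,i+d+N}$. In endpoint coordinates it consists of the $d$-arcs $D_{ab}$ with $a$ in $[i,j-1]$ and $b$ in $[j,i+N-1]$, roughly, together with the radii at vertices $i,\dots,j-1$ on the top row.

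2. Apply $[t]$ for $t=1,\dots,d$. This rotates $\beta$ one step at a time, and the union over these $t$ sweeps out exactly the arcs having one endpoint strictly inside the boundary segment $(i,j)$ and the other strictly outside it. These are precisely the arcs crossing $D_{ij}$.

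3. Handle two boundary effects. The tag flip under $[1]$ is harmless here, because both radii at a vertex lie in the hammock. The condition $j-i-1\in d\mathbb{Z}$ makes the window of length $d$ land exactly on endpoints that are $d$-admissible.

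This step is a careful but routine index comparison, split into three cases: $\beta$ a non-tagged arc, $\beta$ a radius, and $\beta$ wrapping around the puncture.

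\textbf{Part (2).} Here both arcs are radii, $\alpha=D_{ii}^\epsilon$ and $\beta=\tau^{-t}D_{ii}^{\epsilon'}$.

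1. Use the second hammock. $H^+(D_{ii}^+)$ meets the top row of radii in an alternating zigzag pattern. Going along the top row, the hammock contains radii of alternating tags, with a "reset" every $n-1$ steps of $\tau^{-1}$. This reset comes from the triangular pieces of width $n-1$, and it accounts for the factor $\lceil t/(n-1)\rceil$.

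2. Compute $\beta[s]$ for $0<s\le d$ using $[1]$ as rotation by one step. Each application changes the tag, except when $n$ and $d$ are both even.

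3. Compare with the zigzag to find for which tags $\beta[s]$ avoids $H^+(\alpha)$, and symmetrically with the roles of $\alpha$ and $\beta$ swapped. The parity bookkeeping should produce $\epsilon\epsilon'=(-1)^{t+\lceil t/(n-1)\rceil+1}$.

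I expect this parity computation, uniform in the parities of $n$ and $d$, to be the main obstacle. I would first check it on small cases ($n=4,5$; $d=1,2$) to fix the conventions, then do a general induction on $t$, passing from $t$ to $t+1$ by one $\tau^{-1}$ and tracking the sign change.

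\textbf{Part (3).} This is immediate from Part (2). Write $D_{jj}^{\pm}=\tau^{-t}(D_{ii}^{\pm'})$ for the appropriate $t$, adjusting the tag by $(-1)^{td}$ since $\tau$ flips tags when $d$ is odd. For fixed $\epsilon$ and $t$, the orthogonality condition determines $\epsilon'$ uniquely, so exactly one of $D_{jj}^+$ and $D_{jj}^-$ is $d$-orthogonal to $D_{ii}^\pm$.
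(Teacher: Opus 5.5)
The gap is in Part (1), not in the parity computation you flag as the main obstacle. In Step 2 you assert that the arcs crossing $D_{ij}$ are exactly those with one endpoint strictly inside the boundary segment $(i,j)$ and the other strictly outside it. In $P_N^{\circ}$ this is false. An ordinary arc that winds around the puncture can cross $D_{ij}$ while sharing an endpoint with it, or while having both endpoints in $(i,j)$.

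Here are two instances:
\begin{enumerate}
\item For $n=4$, $d=1$ (so $N=4$), the arc $D_{2,5}$ has endpoints $2$ and $1$ and crosses $D_{0,2}$. Indeed $D_{2,5}[1]=D_{1,4}\in H^+(D_{0,2})$.
\item For $n=5$, $d=1$, the arc $D_{2,6}$ has both endpoints $1,2$ in $(0,3)$ and crosses $D_{0,3}$. Indeed $D_{2,6}[1]=D_{1,5}\in H^+(D_{0,3})$.
\end{enumerate}
So the set equality your ``routine index comparison'' is meant to establish does not hold.

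Your Step 1 description of the hammock is also off. For $n=4$, $d=1$, the mesh relations give $H^+(D_{0,2})=\{D_{0,2},D_{0,3},D_{00}^{+},D_{00}^{-},D_{1,4},D_{2,4}\}$. This contains neither $D_{1,3}$ nor $D_{11}^{\pm}$. In general the radii in $H^+(D_{ij})$ sit at $i,i+d,\dots,j-1-d$, and the hammock ends at $\nu D_{ij}=\tau^{-(n-2)}D_{ij}=D_{i+N-d-1,\,j+N-d-1}$. Since the converse half of (1) is a check against this hammock, you need its correct shape.

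What is missing is the right crossing criterion plus an explicit choice of $t$, which is how the paper argues. Work with lifts: two ordinary arcs cross if and only if, after possibly interchanging them, $\alpha=D_{ij}$ and $\beta=D_{ml}$ with $i<m<j<l<m+N$. This allows $l$ to reduce modulo $N$ to $i$ or into $(i,m)$, which covers the examples above. The argument then runs as follows.
\begin{enumerate}
\item Take the unique $t\in\{1,\dots,d\}$ with $m-t\equiv i\pmod d$.
\item Because $j-i-1$ and $l-m-1$ lie in $d\mathbb{Z}$, also $l-t\equiv j\pmod d$. One checks $i\le m-t\le j-d-1$ and $l-t\ge j$.
\item Hence $D_{i,j}\to D_{i+d,j}\to\cdots\to D_{m-t,j}\to D_{m-t,j+d}\to\cdots\to D_{m-t,l-t}=\beta[t]$ is a path inside $H^+(\alpha)$.
\item For a radius $\beta=D_{mm}^{\pm}$ with $i<m<j$, the same $t$ puts $\beta[t]$ on the top row of $H^+(\alpha)$, where both tags occur.
\item The converse is a direct inspection of the correctly described hammock.
\end{enumerate}

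The rest of your plan matches the paper. The reduction to one direction via Serre duality is correct, and (3) does follow from (2). One correction to your picture for (2): the ``reset'' does not occur inside $H^+(D_{ii}^{+})$, which has only $n-1$ radius positions. It comes from the union of the $d$ shifted hammocks. Since $[-1]$ is $\tau^{n-1}$ up to the graph automorphism, $H^+(\alpha)[-s]$ meets the radii $\tau^{-t}D_{ii}^{\pm}$ exactly for $(d-s)(n-1)<t\le(d-s+1)(n-1)$, with one tag for each such $t$. This produces the term $\lceil t/(n-1)\rceil$ and gives (3) at once.
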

\begin{proof}
Assume that $\alpha=D_{i,j}$ is an ordinary $d$-arc with $i<j<i+N$. If $\beta$ is an ordinary $d$-arc such that $\alpha$ and $\beta$ cross in $P_N^\circ$, then, without loss of generality, we may assume that $\beta=D_{ml}$ with $i<m<j<l<m+N$. Let $t\in\{1,2,\cdots,d\}$ be the unique number such that $m-t-i\in d\mathbb{Z}$. It follows that $l-t-j\in d\mathbb{Z}$ since both $j-i-1$ and $l-m-1$ are divided by $d$.  Then $\beta[t]=D_{m-t,l-t}$ and there is a path in the Hom-hammock $H^+(\alpha)$ from $\alpha$ to $\beta[t]$. The path reads as follows. 
\[D_{i,j}\rightarrow D_{i+d,j}\rightarrow\cdots\rightarrow D_{m-t,j}\rightarrow D_{m-t,j+d}\rightarrow\cdots\rightarrow D_{m-t,l-t}.\]
It follows that $\beta[t]\in H^+(\alpha)$ and thus $\Hom_{\mathcal{C}_d(k\mathbb{D}_n)}(X_{\alpha}, X_{\beta}[t])\neq 0$.  Conversely, if $\beta[t]\in H^+(\alpha)$ for some $0<t\leq d$, then one can directly check that $\alpha$ and $\beta$ cross each other in $P_N^{\circ}$ by the presentation of $H^+(D_{i,j})$ above. When $\beta$ is a tagged $d$-arc, we can prove (1) similarly. 

(2) follows straightforwardly by verifying the Hom-hammocks $H^-(\alpha[t])=H^+(\tau^{-1}\alpha[t-1])$ for $t=1,2,\cdots,d$. (3) follows from (2). 
\end{proof}

A \((d+2)\)-angulation of \(P_N^{\circ}\) is a maximal collection of
pairwise \(d\)-orthogonal \(d\)-arcs in the above sense; these collections are
in bijection with basic \(d\)-cluster tilting objects in
\(\mathcal C_d(k\mathbb D_n)\). By a result of Thomas
\cite[Theorem 1]{Thomas2007}, maximal \(d\)-rigid objects coincide with
\(d\)-cluster tilting objects. We refer to
\cite{Schiffler2008,Baur2007,Baur2008,Zhou2009,Thomas2007} for more details of
the geometric models and their relation with \(d\)-cluster combinatorics.

\section{Existence of $d$-cluster tilting objects}

In this section, we assume that $\T$ is a Hom-finite Krull--Schmidt triangulated $k$-category with a Serre functor, together with a covering functor \(F:\ind\Db[H]\lra \ind\T\)
for some finite dimensional hereditary $k$-algebra $H$ such that $F\circ[1]\simeq [1]\circ F$. We assume that there is a group $G\leq\Aut\Db[H]$ such that, for each $x\in\ind\Db[H]$, we have
\(Gx=F^{-1}(Fx)\).
  Our main question is: When does $\T$ admit $d$-cluster tilting objects?

\medskip 
Note that any element $\sigma\in G$ necessarily commutes with $\tau$ and $[1]$ on $\Db[H]$, it follows that the action of $G$ on $\Db[H]$ induces an action on $\mathcal{C}_d(H)=\Db[H]/\langle\tau_{d+1}\rangle$. 
\[\xymatrix{
&\Db[H]\ar[ld]_{F}\ar[rd]^{\pi}\\
\T &&\mathcal{C}_d
(H)}\]
We use \(G\)-stable to mean preserved by the action of \(G\) up to
isomorphism.  Thus a basic object is \(G\)-stable precisely when the set of
isomorphism classes of its indecomposable direct summands is \(G\)-stable.
\begin{Lem}
\label{lem-dct-H-ctcat}
Keep the notations above. The assignments
\[
\mathcal U\longmapsto \pi F^{-1}(\mathcal U)
\qquad\text{and}\qquad
\mathcal V\longmapsto F\pi^{-1}(\mathcal V)
\]
give mutually inverse bijections between the $d$-cluster tilting subsets of
\(\ind\mathcal T\) and the $G$-stable $d$-cluster tilting subsets of
\(\ind\mathcal C_d(H)\).  Moreover, suppose that
\(\tau_{d+1}^m\in G\) for some integer \(m>0\).  Then these assignments
induce mutually inverse bijections between
the isomorphism classes of basic $d$-cluster tilting objects in
\(\mathcal T\) and those of $G$-stable basic $d$-cluster tilting objects in
\(\mathcal C_d(H)\).
\end{Lem}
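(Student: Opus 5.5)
We would prove the statement by passing through $\ind\Db[H]$, using Lemma \ref{lem-covering-dct} twice: once for $F$ and once for $\pi$. The first step is to check that $\pi:\Db[H]\to\mathcal C_d(H)$ meets its hypotheses. It is a triangle functor, so it commutes with $[1]$. It induces a covering $\ind\Db[H]\to\ind\mathcal C_d(H)$, since by definition of the orbit category
\[
\Hom_{\mathcal C_d(H)}(\pi x,\pi y)=\bigoplus_{m\in\mathbb Z}\Hom_{\Db[H]}(x,\tau_{d+1}^m y),
\]
and the fibres are exactly the $\langle\tau_{d+1}\rangle$-orbits. Hom-finiteness of $\mathcal C_d(H)$ is standard from \cite{Keller2005ab}. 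Lemma \ref{lem-covering-dct} then gives the following two correspondences. A subset $\mathcal U\subseteq\ind\T$ is $d$-cluster tilting if and only if $F^{-1}(\mathcal U)$ is $d$-cluster tilting in $\ind\Db[H]$. A subset $\mathcal V\subseteq\ind\mathcal C_d(H)$ is $d$-cluster tilting if and only if $\pi^{-1}(\mathcal V)$ is.

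The second step is to match the subsets. Subsets of $\ind\T$ correspond bijectively, via $\mathcal U\mapsto F^{-1}(\mathcal U)$ with inverse $F$, to $G$-stable subsets of $\ind\Db[H]$, because $F^{-1}(Fx)=Gx$ and $F$ is surjective on objects. Similarly, subsets of $\ind\mathcal C_d(H)$ correspond to $\tau_{d+1}$-stable subsets of $\ind\Db[H]$. Every $d$-cluster tilting subset $\mathcal M$ of $\ind\Db[H]$ is automatically $\tau_{d+1}$-stable, by the Serre duality remark in the preliminaries. Hence a $G$-stable $d$-cluster tilting subset $\mathcal M$ of $\ind\Db[H]$ satisfies $\mathcal M=\pi^{-1}\pi(\mathcal M)$.

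Since $G$ commutes with $\tau$ and $[1]$, the action of $G$ descends to $\mathcal C_d(H)$ with $\pi\sigma=\sigma\pi$. So $\pi(\mathcal M)$ is $G$-stable if and only if $\pi^{-1}\pi(\mathcal M)$ is $G$-stable. Composing the two correspondences, $\mathcal U\mapsto\pi F^{-1}(\mathcal U)$ and $\mathcal V\mapsto F\pi^{-1}(\mathcal V)$ are mutually inverse bijections between $d$-cluster tilting subsets of $\ind\T$ and $G$-stable $d$-cluster tilting subsets of $\ind\mathcal C_d(H)$. One also has to carry the contravariant-finiteness condition along. Here we would note that Lemma \ref{lem-covering-dct} is stated for subsets, and its proof reduces everything to the orthogonality conditions. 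If needed, Lemma \ref{lem-covering-approx} transfers approximations through the coverings.

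The third step, and the main obstacle, is the statement about basic objects. A basic object corresponds to a \emph{finite} set of indecomposables, so we must show that the bijection preserves finiteness. The key is that $G$ and $\tau_{d+1}$ both act on $\ind\Db[H]$, and the hypothesis $\tau_{d+1}^m\in G$ makes the induced action on either side have finite orbits. For a finite $\mathcal U\subseteq\ind\T$, the set $F^{-1}(\mathcal U)$ is a finite union of $G$-orbits. Each $G$-orbit $Gx$ contains $\tau_{d+1}^{m\mathbb Z}x$, so its image under $\pi$ is the image of the $\langle\tau_{d+1}\rangle$-orbit space of $Gx$. We would argue that this is finite because $\langle\tau_{d+1}^m\rangle$ has finite index in $\langle G,\tau_{d+1}\rangle$ acting on $Gx$. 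Concretely, $Gx/\langle\tau_{d+1}^m\rangle$ maps onto $\pi(Gx)$, and we would need $G$ modulo $\langle\tau_{d+1}^m\rangle$ to be finite. This holds in the setting of interest, where $G$ is infinite cyclic by Theorem \ref{thm-locally-finite-classification}, or we would otherwise use $\mathcal C_d(H)$ being finite in Dynkin type.

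Conversely, for a finite $\mathcal V$, the set $\pi^{-1}(\mathcal V)$ is a finite union of $\tau_{d+1}$-orbits. Each such orbit is the union of $m$ orbits of $\tau_{d+1}^m\in G$, so $F$ sends it to at most $m$ objects. This shows that $F\pi^{-1}(\mathcal V)$ is finite. Taking direct sums of representatives then yields the asserted bijection on isomorphism classes of basic $d$-cluster tilting objects.
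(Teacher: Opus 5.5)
Your first two steps are the paper's argument: Lemma \ref{lem-covering-dct} applied to $F$ and to $\pi$, together with the $\tau_{d+1}$-stability of $d$-cluster tilting subsets of $\ind\mathcal{D}^b(H)$. Your converse finiteness argument is also exactly the paper's: a $\langle\tau_{d+1}\rangle$-orbit splits into at most $m$ orbits of $\tau_{d+1}^m\in G$, so $F$ sends it to at most $m$ objects.

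The gap is the forward finiteness step. You need $\pi F^{-1}(\add(U)\cap\ind\mathcal T)$ to be finite, and your argument gives this only when $G/\langle\tau_{d+1}^m\rangle$ is finite or $\ind\mathcal C_d(H)$ is finite. Neither follows from the hypotheses of the lemma, which is stated in the general setting of Section 3. There $H$ is any finite dimensional hereditary algebra, and $G\leq\Aut\mathcal{D}^b(H)$ is any group with $F^{-1}(Fx)=Gx$. The condition $\tau_{d+1}^m\in G$ gives no control on the index of $\langle\tau_{d+1}^m\rangle$ in $G$, so your bound $|\pi(Gx)|\leq|Gx/\langle\tau_{d+1}^m\rangle|$ does not yield finiteness. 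Your two fallbacks are cyclic $G$ via Theorem \ref{thm-locally-finite-classification}, or Dynkin finiteness of $\ind\mathcal C_d(H)$. Both prove the lemma only in the locally finite situation where it is later applied, not as stated.

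The missing idea is that this direction needs no group theory. By the first part, $\pi F^{-1}(\add(U)\cap\ind\mathcal T)$ is a $d$-cluster tilting subset of $\ind\mathcal C_d(H)$. Every $d$-cluster tilting subset of $\mathcal C_d(H)$ has exactly as many elements as there are simple $H$-modules; this is a standard fact about higher cluster categories, and it is what the paper uses. So the image is automatically finite and is the set of indecomposable summands of a $G$-stable basic $d$-cluster tilting object. The hypothesis $\tau_{d+1}^m\in G$ is then needed only in the converse direction, exactly as in your final paragraph. With this replacement your proof is complete.
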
 
\begin{proof}
By Lemma \ref{lem-covering-dct}, a subset
\(\mathcal U\subseteq\ind\mathcal T\) is $d$-cluster tilting if and only if
\(F^{-1}(\mathcal U)\) is $d$-cluster tilting in \(\ind\Db[H]\).  Every
$d$-cluster tilting subset of \(\Db[H]\) is
\(\tau_{d+1}\)-stable.  Therefore
\(F^{-1}(\mathcal U)\) is a union of fibres of \(\pi\), and another
application of Lemma \ref{lem-covering-dct} shows that
\(\pi F^{-1}(\mathcal U)\) is $d$-cluster tilting in
\(\ind\mathcal C_d(H)\).  It is $G$-stable because
\(F^{-1}(\mathcal U)\) is $G$-stable.

Conversely, let \(\mathcal V\subseteq\ind\mathcal C_d(H)\) be a $G$-stable
$d$-cluster tilting subset.  Applying Lemma \ref{lem-covering-dct} first to
\(\pi\) and then to \(F\), we see that
\(F\pi^{-1}(\mathcal V)\) is $d$-cluster tilting in
\(\ind\mathcal T\); here the $G$-stability of \(\mathcal V\) gives
\(F^{-1}F\pi^{-1}(\mathcal V)=\pi^{-1}(\mathcal V)\).  Finally,
\[
F\pi^{-1}\pi F^{-1}(\mathcal U)=\mathcal U,
\qquad
\pi F^{-1}F\pi^{-1}(\mathcal V)=\mathcal V,
\]
so the two assignments are mutually inverse.

For the last assertion, let \(U\) be a basic $d$-cluster tilting object in
\(\mathcal T\).  By the first part of the lemma,
\(\pi F^{-1}(\add(U)\cap\ind\mathcal T)\) is a $d$-cluster tilting subset of
\(\ind\mathcal C_d(H)\).  Every such subset has exactly as many
indecomposable objects as there are isomorphism classes of simple
$H$-modules.  It is therefore finite and is the set of
indecomposable summands of a $G$-stable basic $d$-cluster tilting object in
\(\mathcal C_d(H)\).  Conversely, the inverse image under \(\pi\) of the
finite set of indecomposable summands of a $G$-stable basic $d$-cluster
tilting object is a finite union of
\(K=\langle\tau_{d+1}\rangle\)-orbits and is $G$-stable.  Since
\(\langle\tau_{d+1}^m\rangle\subseteq G\), each $K$-orbit meets at most
$m$ $G$-orbits.  Thus its image under \(F\) is the set of indecomposable
summands of a basic object in
\(\mathcal T\).  The first part of the lemma shows that these objects are
$d$-cluster tilting and that the two constructions are mutually inverse.
\end{proof}

Note that all $d$-cluster tilting objects of $\mathcal{C}_d(H)$ are $\tau_{d+1}$-invariant. An immediate consequence is the following corollary. 
\begin{Coro}
\label{koro-taud-power}
Let $H$ be a finite dimensional hereditary algebra and let $d\geq 1$ be an integer. If $G=\langle\tau_{d+1}^m\rangle$ for some positive integer $m$, then $\T=\Db[H]/G$ admits $d$-cluster tilting objects. 
\end{Coro}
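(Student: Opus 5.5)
We apply Lemma \ref{lem-dct-H-ctcat} with $F$ the canonical projection $\Db[H]\lra\Db[H]/G$. Since $G=\langle\tau_{d+1}^m\rangle$ acts freely on $\ind\Db[H]$, this projection restricts to a covering functor $\ind\Db[H]\lra\ind\T$ in the sense of Bongartz--Gabriel. It commutes with $[1]$, and its fibres are exactly the $G$-orbits, so the standing hypotheses of this section hold. The projection is taken in the sense of Keller's orbit category, so that $\T$ is triangulated (as for $\mathcal C_d(H)$, cf.\ \cite{Keller2005ab}), or alternatively $\T$ is assumed triangulated in the setting under consideration. Moreover, the additional hypothesis of the lemma, namely $\tau_{d+1}^m\in G$ for some $m>0$, holds trivially.

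By Lemma \ref{lem-dct-H-ctcat}, basic $d$-cluster tilting objects of $\T$ correspond bijectively to $G$-stable basic $d$-cluster tilting objects of $\mathcal C_d(H)$. So it suffices to produce one such $G$-stable object. Every generator of $G$ acts on $\mathcal C_d(H)=\Db[H]/\langle\tau_{d+1}\rangle$ as a power of $\tau_{d+1}$, and $\tau_{d+1}$ is isomorphic to the identity functor on the orbit category. Hence $G$ acts trivially on isomorphism classes of objects of $\mathcal C_d(H)$, and every object of $\mathcal C_d(H)$ is $G$-stable. In particular every $d$-cluster tilting object is $G$-stable, as the remark before the corollary states.

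It remains to exhibit one $d$-cluster tilting object in $\mathcal C_d(H)$; the natural candidate is the image $\pi(H)$ of the regular module. To see that it is $d$-cluster tilting, we use the standard fundamental domain: $\ind\mathcal C_d(H)$ is represented by $\ind(\modcat{H})[i]$ for $0\leq i\leq d-1$ together with $H[d]$. The vanishing $\Hom_{\mathcal C_d(H)}(H,H[j])=\bigoplus_{r}\Hom_{\Db[H]}(H,\tau_{d+1}^{r}H[j])=0$ for $1\leq j\leq d$ follows from hereditariness, which kills all Hom-spaces except possibly those in degree $0$ and $1$ between module shifts. Maximality is the statement that $H^{\perp_d}=\add H$. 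Equivalently, one can cite the known result (Keller--Reiten, Thomas, Zhu) that $\pi(H)$ is $d$-cluster tilting in $\mathcal C_d(H)$.

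The only real obstacle is this last maximality check; everything else is formal from Lemma \ref{lem-dct-H-ctcat}. Rather than redoing the Hom computation, I would invoke the literature result, or for $\Delta$ Dynkin of type $\mathbb A$ or $\mathbb D$ cite the existence of $(d+2)$-angulations in the geometric models of Subsection \ref{subsec-geometric-model}.
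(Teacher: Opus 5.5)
Your proposal is correct and follows the same route as the paper. You apply Lemma \ref{lem-dct-H-ctcat}, whose extra hypothesis $\tau_{d+1}^m\in G$ holds trivially, note that $G$ acts trivially on isomorphism classes in $\mathcal C_d(H)$ so the canonical $d$-cluster tilting object $\pi(H)$ is $G$-stable, and transfer it to $\T$. The paper takes the covering setup and the $d$-cluster tilting property of $\pi(H)$ for granted, where you verify or cite them explicitly.
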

\begin{proof}
The canonical basic $d$-cluster tilting object of \(\mathcal C_d(H)\) is
$G$-stable, since $G=\langle\tau_{d+1}^m\rangle$ acts trivially on
\(\mathcal C_d(H)\).  Since \(\tau_{d+1}^m\in G\), the last assertion of
Lemma \ref{lem-dct-H-ctcat} gives a basic $d$-cluster tilting object in
\(\mathcal T\).
\end{proof}
 
Since every locally finite triangulated category admits a covering functor from the derived category of a hereditary algebra of Dynkin type, it is natural to ask the following question. 

\medskip 
\noindent
{\bf Question.} {\em Let $\T$ be a locally finite triangulated category, for which $d\geq 1$ does $\T$ admit $d$-cluster tilting objects?}

\medskip 
By the classification of locally finite triangulated categories, the AR-quiver of $\T$ is  of the form $\mathbb{Z}\Delta/G$ (see \cite{Xiao2005,Amiot2007}), where $\Delta$ is a Dynkin tree and $G=\langle\eta\rangle$ is a cyclic group listed in Theorem \ref{thm-locally-finite-classification}.  Such a weakly admissible group satisfies the additional condition in Lemma \ref{lem-dct-H-ctcat}.  Indeed, the list in Theorem \ref{thm-locally-finite-classification} shows that some positive power of \(\eta\) is a non-zero power of \(\tau\).  The formulas for the suspension in \S\ref{subsection-locally-finite} likewise show that some positive power of \(\tau_{d+1}=\tau S^{-d}\) is a non-zero power of \(\tau\).  These two pure translation subgroups have a common non-zero power.  Hence \(\tau_{d+1}^m\in G\) for some \(m>0\).

We list some conventions for the rest of this section. Let $h_{\Delta}$ be the Coxeter number of $\Delta$, as listed in the following table. 
\begin{center}
\begin{tabular}{c|c|c|c|c|c}
\toprule  
$\Delta$ & $\mathbb{A}_n$ & $\mathbb{D}_n$ & $\mathbb{E}_6$ & $\mathbb{E}_7$ & $\mathbb{E}_8$ \\
\hline
$h_{\Delta}$ & $n+1$ & $2n-2$ & $12$ & $18$ & $30$\\
\bottomrule
\end{tabular}
\vspace{0.5em}
\captionof{table}{Coxeter numbers.}
\label{tab:coxeter-numbers}
\end{center} 
We define 
\[N_d(\Delta)=\begin{cases}
    h_{\Delta}d+2, & \text{type }\mathbb{A};\\
    h_{\Delta}^*d+1, & \text{type }\mathbb{D},\mathbb{E},
\end{cases}\tag{$3.1$}\label{def-N}\]
where $h_{\Delta}^*=h_{\Delta}/2$ is the half Coxeter number. For  $\Delta$ of types $\mathbb{A}$ and $\mathbb{D}$, the number $N_d(\Delta)$ coincides with the $N$ in the geometric models (see \S\ref{subsec-geometric-model}).

\begin{Lem}\label{lem-N}
Let $\Delta$ be a Dynkin tree of type $\mathbb{D}$ or $\mathbb{E}$ listed in \S \ref{subsection-locally-finite}, and let $d\geq 1$ be an integer. Suppose that  $\phi$ is the identity functor for $\Delta$ not of type $\mathbb{D}_n$ with $n$ odd or $\mathbb{E}_6$, and $\phi$ is the automorphism of order $2$ for $\Delta$ of type $\mathbb{D}_n$ with $n$ odd or $\mathbb{E}_6$. Then we have $\tau_{d+1}=\tau^{N_d(\Delta)}\phi^d$ in $\Db[k\Delta]$, and 
$[N_d(\Delta)]\phi=\id$  in $\mathcal{C}_d(k\Delta)$.
\end{Lem}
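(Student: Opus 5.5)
The plan is to work entirely at the level of automorphisms of the translation quiver \(\mathbb{Z}\Delta\), using the list of suspensions \(S=[1]\) recorded in \S\ref{subsection-locally-finite}. I interpret both equalities in the statement as equalities of the induced automorphisms of \(\mathbb{Z}\Delta\), equivalently of the induced actions on \(\ind\Db[k\Delta]\) and on \(\ind\mathcal{C}_d(k\Delta)\).

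First I put the list into a uniform shape. In all types \(\mathbb{D}\) and \(\mathbb{E}\) one has
\[S=\tau^{-h_\Delta^*}\phi,\]
with \(\phi\) as in the statement.
\begin{itemize}
\item For \(\mathbb{D}_n\) one has \(h^*_\Delta=n-1\), and \(\phi\) is trivial exactly when \(n\) is even.
\item For \(\mathbb{E}_6\), \(h^*_\Delta=6\).
\item For \(\mathbb{E}_7\), \(h^*_\Delta=9\).
\item For \(\mathbb{E}_8\), \(h^*_\Delta=15\).
\end{itemize}
The automorphism \(\phi\) is induced by a diagram automorphism of \(\Delta\). Hence it commutes with \(\tau\), and it satisfies \(\phi^2=\id\), so \(\phi^{-1}=\phi\).

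With this normal form the first identity is a direct computation:
\[\tau_{d+1}=\tau\circ[-d]=\tau S^{-d}=\tau\cdot\tau^{h^*_\Delta d}\phi^{-d}=\tau^{h^*_\Delta d+1}\phi^{d}=\tau^{N_d(\Delta)}\phi^d,\]
using the definition \eqref{def-N} of \(N_d(\Delta)\).

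For the second identity, I pass to \(\mathcal{C}_d(k\Delta)=\Db[k\Delta]/\langle\tau_{d+1}\rangle\). The suspension there is induced by \(S\), and \(\tau_{d+1}\) acts trivially. Writing \(N=N_d(\Delta)\), the first part gives \(\tau^{N}\phi^{d}=\id\), hence \(\tau^N=\phi^{d}\) on \(\mathcal{C}_d(k\Delta)\). Then
\[[N]\phi=S^N\phi=\tau^{-h^*_\Delta N}\phi^{N+1}=\phi^{h^*_\Delta d}\,\phi^{N+1}=\phi^{2h^*_\Delta d+2}=\id,\]
since \(N+1=h^*_\Delta d+2\) and \(\phi^2=\id\).

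The only step that needs care is justifying the manipulations in the orbit category. One must check that \(\phi\) and \(\tau\) commute, so that \(\tau\) and \(\phi\) can be reordered freely. One must also check that "\(\tau_{d+1}=\id\) in \(\mathcal{C}_d\)" may be used to replace \(\tau^N\) by \(\phi^{d}\) inside these products. Both facts follow because all the maps involved are automorphisms of \(\mathbb{Z}\Delta\) commuting with \(\tau_{d+1}\), so they descend to \(\mathbb{Z}\Delta/\langle\tau_{d+1}\rangle\).

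I expect this bookkeeping, together with getting the correct conventions for \(S\) in the \(\mathbb{D}_n\) odd and \(\mathbb{E}_6\) cases, to be the main (mild) obstacle. The arithmetic itself is immediate.
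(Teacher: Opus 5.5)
Your proposal is correct and follows essentially the same route as the paper: write $S=\tau^{-h^*_\Delta}\phi$ using the list in \S\ref{subsection-locally-finite}, compute $\tau S^{-d}$, and then use $\tau^{N}=\phi^{d}$ in $\mathcal{C}_d(k\Delta)$ to evaluate $S^{N}\phi$. The only difference is cosmetic. The paper writes out the $\mathbb{D}_n$, $n$ odd, case using the parities of $N$ and $n-1$, while your exponent count $\phi^{2h^*_\Delta d+2}=\id$ treats all types $\mathbb{D}$ and $\mathbb{E}$ at once.
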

\begin{proof}
The lemma follows from a direct check of the relations between $S$ and $\tau$
in \S \ref{subsection-locally-finite}.  For instance, in type
\(\mathbb D_n\) with \(n\) odd, we have
\(S=\tau^{-(n-1)}\phi\), where \(\phi\) is the automorphism of order \(2\).
Hence
\[
\tau_{d+1}=\tau S^{-d}=\tau^{(n-1)d+1}\phi^d
=\tau^{N_d(\mathbb D_n)}\phi^d .
\]
Moreover \(N_d(\mathbb D_n)\) is odd, and the relation
\(\tau^{N_d(\mathbb D_n)}=\phi^d\) holds in the cluster category
\(\mathcal C_d(k\mathbb D_n)\).  Therefore
\[
[N_d(\mathbb D_n)]\phi
=S^{N_d(\mathbb D_n)}\phi
=\tau^{-(n-1)N_d(\mathbb D_n)}\phi^{N_d(\mathbb D_n)+1}
=(\phi^d)^{-(n-1)}=\id .
\]
The cases \(\mathbb E_6\) and \(\phi=\id\) are checked in the same way.
\end{proof}

Let $\Aut(\mathbb{Z}\Delta)$ be the group consisting of weakly admissible automorphisms of $\mathbb{Z}\Delta$. We define a grading on $\Aut(\mathbb{Z}\Delta)$ as follows.  
\[\begin{array}{rcl}
    \deg:\Aut(\mathbb{Z}\Delta) & \lra & \mathbb{Z}/N_d(\Delta)\mathbb{Z}\\
    \tau & \mapsto & d,\\
    S & \mapsto & 1,\\
   \psi & \mapsto & 0,
\end{array}
\]
where  $\psi$ is induced by an automorphism of $\mathbb{D}_n$ or $\mathbb{E}_6$. It is easy to check that $\deg$ is well-defined by the relations in $\Aut(\mathbb{Z}\Delta)$ (see \S \ref{subsection-locally-finite}). If $G=\langle\eta\rangle\leq \Aut(\mathbb{Z}\Delta)$ is a weakly admissible group, then we define $L_G$ to be the order of $\deg(\eta)$ in $\mathbb{Z}/N_d(\Delta)\mathbb{Z}$, which is 
\[L_G=\frac{N_d(\Delta)}{(\deg(\eta),N_d(\Delta))}.\]
This is the smallest positive integer such that $\deg(\eta) L_G \equiv 0 \pmod{N_d(\Delta)}$. Note that $L_G$ is well-defined and depends only on $G$.

\medskip 
The   existence  of $d$-cluster tilting objects in a locally finite triangulated category $\T$ is classified by the following theorem. 
\begin{Theo}\label{theo-existence-dct}
Let $\T$ be a locally finite triangulated category, and let the AR-quiver of $\T$ be isomorphic to $\mathbb{Z}\Delta/G$, where $\Delta$ is a Dynkin tree and $G=\langle\eta\rangle$ is a weakly admissible group. Then $\T$ has a $d$-cluster tilting object if and only if $L_G$ satisfies the conditions in Table \ref{tab:existence-conditions}.
\end{Theo}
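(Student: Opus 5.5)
By Lemma \ref{lem-dct-H-ctcat}, together with the remark that some positive power of $\tau_{d+1}$ lies in $G$, $\T$ has a $d$-cluster tilting object if and only if $\mathcal{C}_d(k\Delta)$ has a $G$-stable $d$-cluster tilting object. So the whole problem is to decide when the induced action of $\eta$ on $\mathcal{C}_d(k\Delta)$ fixes some basic $d$-cluster tilting object.

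The first step is to identify this action. In $\mathcal{C}_d(k\Delta)$ we have $\tau=\tau_{d+1}[d]=[d]$. By Lemma \ref{lem-N}, $[N_d(\Delta)]\phi=\id$, and $\phi$ can be absorbed into $[\,\cdot\,]$ via the same relation. Hence every weakly admissible automorphism acts on $\mathcal{C}_d(k\Delta)$ as $[\deg(\cdot)]$, possibly composed with a diagram automorphism $\psi$. In the geometric models $[1]$ is rotation by one step (with a tag change in type $\mathbb{D}$), so $\deg$ records the rotation angle $2\pi\deg(\eta)/N_d(\Delta)$. The image of $G$ is therefore the rotation group of order $L_G$, possibly twisted by $\psi$. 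Stability under $G$ is then stability under rotation by $N_d(\Delta)/L_G$ steps, together with the tag/diagram twist.

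The second step is a type-by-type analysis.

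\textbf{Type $\mathbb{A}_n$.} We need $(d+2)$-angulations of $P_N$, with $N=(n+1)d+2$, that are invariant under the cyclic rotation group of order $L$. A rotation-invariant polygon dissection has a central cell or a central diagonal fixed by the rotation.
\begin{itemize}
\item A central cell is a $(d+2)$-gon invariant under the order-$L$ rotation, so $L\mid d+2$ (with $L\mid N$ automatically).
\item A central diameter requires $L=2$ and the diameter to be a $d$-diagonal, which is a parity condition on $N/2-1$ modulo $d$.
\end{itemize}
Conversely, whenever the central piece exists, I construct an invariant angulation by taking it and then angulating one fundamental region, say by a fan, and rotating the result. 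This gives existence together with an explicit construction, and it is also what the counting theorem will need.

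\textbf{Type $\mathbb{D}_n$.} I use $P_N^\circ$, with $N=(n-1)d+1$. An invariant angulation contains either tagged radii or a central $(d+2)$-cell surrounding the puncture. Rotation by $N/L$ steps, together with the tag change, must preserve the set of radii; Lemma \ref{lem-non-crossing}(2),(3) decides which pairs of radii are compatible. This should give conditions such as $L\mid N$ with $L$ dividing the number of radii plus a rotational cell condition. Separately I have to handle $G$ that include $\phi$, which swaps tags.

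\textbf{Type $\mathbb{E}_{6,7,8}$.} There is no geometric model, so I would argue with the finitely many $d$-cluster tilting objects up to rotation. Using the known counts (Fuss--Catalan numbers) and the $[1]$-action on $\mathcal{C}_d$, I determine for each divisor $L$ of $N_d$ whether a fixed point exists. For this I would use the cyclic sieving phenomenon for $m$-clusters (Eu--Fu/Armstrong), which counts fixed points of $[1]^k$ as evaluations of a $q$-Fuss--Catalan number at roots of unity. That gives the table entries uniformly in all types, and also serves as a check on the $\mathbb{A}$ and $\mathbb{D}$ analysis.

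I expect the main obstacle to be type $\mathbb{D}$ with tagged radii, and $\mathbb{E}_6$, $\mathbb{D}_4$ with a nontrivial diagram twist. There the action is not a pure rotation and naive cyclic sieving does not apply directly. For these cases I would first try to reduce to a pure power of $[1]$ by passing to $\eta^2$, or $\eta^3$ for $\mathbb{D}_4$. I would then check by hand, using Lemma \ref{lem-non-crossing}, which of the $\eta^2$-fixed angulations are also $\eta$-fixed.
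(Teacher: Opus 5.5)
\textbf{Verdict: there are genuine gaps.} Your first step, reducing via Lemma \ref{lem-dct-H-ctcat} to $G$-stable $d$-cluster tilting objects in $\mathcal C_d(k\Delta)$, is the paper's. The type-by-type part is where the proposal breaks down.

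\textbf{Type $\mathbb A$.} The criterion you state is wrong. A rotation-invariant central cell must have $d$-diagonals as sides, of lengths $ds_j+1$. Write $d+2=mL$ and $q=s_1+\cdots+s_m$. The $L$-fold symmetry gives $N=L(dq+m)=Ldq+d+2$, and comparing with $N=(n+1)d+2$ forces $n=Lq$. So $L_G\mid n$ is an extra necessary condition; it does not follow from $L_G\mid d+2$ and $L_G\mid N$.

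A concrete counterexample to your criterion: take $\mathbb A_2$, $d=2$, $G=\langle\rho^2\rangle$. Then $N=8$ and $L_G=4$ divides $d+2$. But every $2$-diagonal of the octagon joins vertices at distance $3$, so the invariant square $\{0,2,4,6\}$ is not a cell. Moreover, the two diagonals of a quadrangulation cannot form an orbit of size $4$. Hence there is no invariant quadrangulation, exactly as the table predicts. The paper gets $L_G\mid n$ from Lemma \ref{lem-LG1}, because $G$ acts freely on the $n$ diagonals of $W$.

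\textbf{Type $\mathbb D$.} Nothing is actually derived here. Calling the image of $G$ ``a rotation group of order $L_G$, possibly twisted'' hides the mechanism that produces the table:
\begin{itemize}
\item $[1]$ also switches tags, so $\eta^{L_G}$ may act as the global tag switch.
\item Which radii in an orbit are mutually $d$-orthogonal is governed by the parity formula of Lemma \ref{lem-non-crossing}(2).
\end{itemize}
This is why the $\mathbb D$ rows depend on $\psi$, on $n/L_G$ and on the parity of $rL_G/N$, not on $L_G$ alone.

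The paper's argument runs as follows. Every angulation contains a tagged radius. For $L>1$ its orbit has exactly $L$ elements and must be pairwise orthogonal. The condition $L\mid n$ forces $d=Lb+1$, and then $(\star)$ and $(**)$ yield the table. Sufficiency comes from filling the $L$ sectors with $\mathbb A_{n/L-1}$-angulations. None of this is in your plan. The twisted families ($\mathbb D_n$ with $n$ even and $\psi=\phi$, and $\mathbb D_4$ with triality) are only deferred.

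\textbf{Cyclic sieving.} This is a genuinely different route from the paper's Hom-hammock residue tables and orbit-graph counts. It does reproduce the table whenever $\eta$ acts as a power $[k]$ of the suspension; for instance, in type $\mathbb A$ non-vanishing gives exactly $L\mid(n,d+2)$, or $L=2$ with $n$ odd. There are three caveats.
\begin{itemize}
\item It imports an external theorem, which you must cite in the generality you need: facets, all $d$, exceptional types included.
\item You also need the identification of Fomin--Reading's map $R_d$ with $[1]$.
\item The normalization is delicate. You must evaluate $\operatorname{Cat}^{(d)}(W;q)$ at $\omega^k$ with $\omega$ a primitive $(dh+2)$-th root of unity, not at a primitive $L_G$-th root. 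For $\mathbb E_8$, $d=1$, $G=\langle\tau^2\rangle$ (so $L_G=8$), the naive evaluation gives $8$, while the correct one gives $0$. The latter agrees with the paper's exclusion of $L=8$.
\end{itemize}
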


\begin{center}
\centering
\renewcommand{\arraystretch}{1.25}
\begin{tabular}{c|c|l||p{0.40\textwidth}}
\toprule
$\Delta$ & $\eta$ & parity of $(n,d)$ &condition\\
\hline
$\mathbb{A}_n$ & & & $L_G \mid (n,d+2)$, or $L_G=2$ and $n$ odd\\
\hline 
	\multirow{3}{*}{$\mathbb{D}_n$} & \multirow{3}{*}{$\tau^r$} & $d$ is even & $L_G\mid n$\\
	&& even, odd & $L_G\mid n$ and $n/L_G$ is even\\
	&& odd, odd & $L_G=1$, or $L_G\mid n$ and $rL_G/N$ is even\\
\hline
\multirow{3}{*}{$\mathbb{D}_n$} & \multirow{3}{*}{$\tau^r\psi,\, o(\psi)=2$} & $d$ is even & $L_G=1$\\
&& even, odd &  $L_G=1$, or $L_G\mid n$ and $n/L_G$ is odd\\
&& odd, odd & $L_G=1$, or $L_G\mid n$ and $rL_G/N$ is odd\\
\hline
$\mathbb{D}_4$ & $\tau^r\psi,o(\psi)=3$ & & $L_G=1$\\
\hline
\(\mathbb{E}_6\) & & & \(L_G=1\)\\
\hline
\(\mathbb{E}_7\) & & & \(L_G\in\{1,7\}\)\\
\hline
$\mathbb{E}_8$ & & & $L_G\in\{1,2,4\}$\\
\hline 
\multicolumn{4}{c}{\parbox{0.86\textwidth}{\centering
\(\psi\) denotes the automorphism of \(\mathbb Z\Delta\) induced by an
automorphism of \(\Delta\).}}\\
\bottomrule
\end{tabular}
\vspace{0.5em}
\captionof{table}{Existence conditions for \(d\)-cluster tilting objects.}
\label{tab:existence-conditions}
\end{center}

\subsection{Necessity}\label{subsection-necessarity}
We first prove the necessary part of Theorem \ref{theo-existence-dct}.  The
sufficiency will follow from the counting theorem in \S\ref{subsection-counting} below.  For simplicity, we shall write $N$ for $N_d(\Delta)$ from now on. 

\begin{Lem}\label{lem-LG1}
Keep the notations above.  For each indecomposable object \(X\) in
\(\mathcal{C}_d(k\Delta)\) not fixed by \(G\), the orbit of \(X\) under the
action of \(G\) has size divisible by \(L_G\). 
\end{Lem}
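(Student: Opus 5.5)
The plan is to read the orbit size of \(X\) off the degree map \(\deg\colon G\to\mathbb{Z}/N\mathbb{Z}\) via the orbit--stabilizer relation. The image \(\deg(G)=\langle\deg(\eta)\rangle\) is cyclic of order exactly \(L_G\). Let \(G_X=\{g\in G\mid gX\simeq X \text{ in } \mathcal{C}_d(k\Delta)\}\) be the stabilizer of \(X\). Then the orbit of \(X\) has size \([G:G_X]\), which is finite because \(\tau_{d+1}^m\in G\) acts trivially. If \(G_X\subseteq\Ker(\deg|_G)\), then \(\deg\) induces a surjection \(G/G_X\to\deg(G)\), and \(L_G\) divides \([G:G_X]\). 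So the whole proof reduces to the following claim.

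\emph{Claim:} if \(g\in G\) fixes an indecomposable \(X\in\mathcal{C}_d(k\Delta)\) that is not fixed by all of \(G\), then \(\deg(g)=0\).

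To prove the claim, I would lift to \(\mathbb{Z}\Delta\). Choose a vertex \(x\) with \(\pi(x)=X\). The condition \(gX\simeq X\) means \(gx=\tau_{d+1}^jx\) for some \(j\), so \(g'=\tau_{d+1}^{-j}g\) fixes the vertex \(x\) of \(\mathbb{Z}\Delta\). By Lemma \ref{lem-N}, \(\deg(\tau_{d+1})=d-d=0\), hence \(\deg g=\deg g'\). It therefore suffices to show that an automorphism of \(\mathbb{Z}\Delta\) fixing a vertex has degree \(0\) (with one exception, treated below). Such an automorphism preserves the \(\tau\)-orbit coordinate of \(x\) and the \(\mathbb{Z}\)-coordinate. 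By the list in \S\ref{subsection-locally-finite} and Theorem \ref{thm-locally-finite-classification}, it is therefore of the form \(\psi\), with \(\psi\) induced by a diagram automorphism of \(\Delta\) fixing the vertex \(p\) of \(x\). For types \(\mathbb{D}\) and \(\mathbb{E}\) this gives \(\deg\psi=0\) by definition, and the claim follows there.

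The main obstacle is type \(\mathbb{A}_n\) with \(n\) odd. There the order-two automorphism \(\phi=\tau^{(n+1)/2}S\) fixes the middle row, and
\[
\deg\phi=\tfrac{(n+1)d}{2}+1=\tfrac{N}{2}\neq 0 .
\]
For this case I would use the geometric model of \S\ref{subsec-geometric-model}. There \(\deg(g)\) is exactly the rotation of the \(N\)-gon induced by \(g\): \(\tau\) rotates by \(d\), \([1]\) rotates by \(1\), and \(\phi\) is the half-turn. The only \(d\)-diagonals with nontrivial stabilizer are then the diameters \(D_{(x,x+N/2)}\), and their stabilizer is generated by the half-turn. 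If \(N/2\notin\deg(G)\), the claim holds. If \(N/2\in\deg(G)\), then \(L_G\) is even, and I must check carefully whether a diameter can have orbit size \(L_G/2\) without being \(G\)-fixed. I expect this forces \(L_G=2\), in which case the diameter is \(G\)-fixed; this is presumably why the hypothesis excludes fixed objects and why Table \ref{tab:existence-conditions} has the separate case ``\(L_G=2\) and \(n\) odd''.

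If this check fails for larger even \(L_G\), I would restrict the statement to the diameter-free objects, or treat diameters separately in the necessity argument. For type \(\mathbb{D}\), I would double-check the tagged radii in the same way via the model of \(P_N^\circ\): rotation again has degree equal to the rotation amount, and tag changes have degree \(0\), so the claim holds there directly.
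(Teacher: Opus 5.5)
There is a genuine gap, and it is the case you flagged yourself: diameters in type $\mathbb A_n$ with $n$ odd. Your expected resolution of it is wrong.

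The rest of your argument is sound. The orbit--stabilizer reduction is correct. In types $\mathbb D$ and $\mathbb E$ it is the paper's argument in other words: the paper notes that $\eta^m$ shifts the $\tau$-coordinate by $m\deg(\eta)\,h^*_\Delta$ modulo $N$ and uses $(N,h^*_\Delta)=1$. That is equivalent to your observation that an element fixing a vertex of $\mathbb Z\Delta$ modulo $\tau_{d+1}$ is a pure diagram automorphism, hence has degree $0$. In type $\mathbb A$ your rotation picture is also the paper's. For a diagonal that is not a diameter, the stabilizer in $\mathbb Z/N\mathbb Z$ is trivial, so the orbit has size exactly $L_G$.

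For diameters, the expectation that the check ``forces $L_G=2$'' is false. Take $n=3$, $d=1$ and $G=\langle\tau\rangle$. This group is weakly admissible since $\tau^r$, $r\geq1$, is on Amiot's list; it occurs, e.g., for the stable category of $k[x]/(x^4)$. Then $N=6$, $\deg\tau=1$ and $L_G=6$, and $\tau$ rotates the hexagon by one step. The diameter $D_{(0,3)}$ therefore has orbit $\{D_{(0,3)},D_{(1,4)},D_{(2,5)}\}$. This orbit has size $3$: it is not $G$-fixed, and $3$ is not divisible by $6$.

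In general, if $n$ is odd and $L_G$ is even, then $N/2\in\deg(G)$. The stabilizer of a diameter is then $G\cap\deg^{-1}\{0,N/2\}$, and its orbit has size $L_G/2$. So the statement as written is false whenever $n$ is odd and $L_G\geq 4$ is even, and no argument can close this case. The paper's own proof has the same hole. It asserts that every diagonal not fixed by $\eta$ has orbit of size exactly $L_G$, which overlooks that a diameter is fixed by the half-turn $\eta^{L_G/2}$.

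Your fallback is the right repair. The lemma holds for all objects in types $\mathbb D$ and $\mathbb E$, and for all non-diameters in type $\mathbb A$. A diameter has orbit size $L_G$ or $L_G/2$ according as $L_G$ is odd or even. This corrected version is all the later arguments use. In the type $\mathbb A$ necessity proof, the lemma is invoked only when $W$ contains no diameter. The diameter branch is handled separately, using the fact that two distinct diameters cross, which forces $\eta(D)=D$.
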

\begin{proof}
For type $\mathbb{A}_n$, we use the geometric model of
\(\mathcal{C}_d(k\mathbb{A}_n)\) given by \(d\)-diagonals in a regular
\(N\)-gon \(P_N\). The action of \(\eta\) on
\(\mathcal{C}_d(k\mathbb{A}_n)\) is given by a rotation of \(P_N\) by
\(\frac{2\pi\deg\eta}{N}\). Suppose \(X\) corresponds to a \(d\)-diagonal
\(D\).  The case when \(D\) is fixed by \(\eta\) only happens when \(D\) goes
through the center of \(P_N\) and \(\deg\eta=\frac{N}{2}\). In this case, the
size of the orbit of \(X\) under the action of \(G\) is \(1\). If \(D\) is not
fixed by \(\eta\), then the size of the orbit of \(X\) under the action of
\(G\) is exactly \(L_G\), since \(L_G\) is the smallest positive integer such
that \(\deg(\eta)L_G\equiv0\pmod N\).

For type $\mathbb{D}_n$ and $\mathbb{E}_n$, we adopt the notation in \S \ref{subsection-locally-finite}.
Let $x=(i,t)$ be a vertex of $\mathbb{Z}\Delta$ corresponding to $X$ in $\mathcal{C}_d(\Delta)$. The size of the orbit of $X$ under the action of $G$ is exactly the smallest positive integer $m$ such that $\eta^m X = X$. Since $\eta^m X = \psi (X[m\deg\eta])$, where $\psi$ is induced by an automorphism of $\Delta$, this means that $\psi \circ S^{m\deg\eta} x=\tau_{d+1}^r x$ for some $r$. 
Note that  $S=\tau^{-h_{\Delta}^*}\phi$ by \S \ref{subsection-locally-finite}, where $\phi$ is the automorphism of order $2$ for type $\mathbb{D}_n$ with $n$ odd or $\mathbb{E}_6$. Thus, we have 
\(\psi\circ \tau^{-m\deg\eta\cdot h_{\Delta}^*}(i,t)=\tau^{rN}\phi^r(i,t).\)
Note that $\psi$ and $\phi$ do not change the first coordinate of $(i,t)$. It follows that 
\(i+m\deg\eta \cdot h_{\Delta}^* \equiv i \pmod{N}.\)
This implies that $m\deg\eta \cdot h_{\Delta}^* \equiv 0 \pmod{N}$. Note that $(N,h_{\Delta}^*)=1$ since $N=dh_{\Delta}^*+1$. We deduce that $m\deg\eta \equiv 0 \pmod{N}$. Since $L_G$ is the order of $\deg\eta$ in $\mathbb{Z}/N\mathbb{Z}$, we have \(L_G\mid m\).
\end{proof}

\begin{proof}[Necessary part of Theorem \ref{theo-existence-dct} for type $\mathbb{A}$]
Assume that $\T$ has a $d$-cluster tilting object. By Lemma
\ref{lem-dct-H-ctcat}, $\mathcal{C}_d(k\mathbb{A}_n)$ has a $G$-stable
$d$-cluster tilting object. By the geometric model of
$\mathcal{C}_d(k\mathbb{A}_n)$, this is equivalent to a \(G\)-stable
\((d+2)\)-angulation \(W\) of \(P_N\). Note that the action of \(\eta\) on
\(\mathcal{C}_d(k\mathbb{A}_n)\) is given by \([\deg\eta]\), which corresponds
to a rotation of \(P_N\) by \(\frac{2\pi\deg\eta}{N}\).

If $W$ contains a $d$-diagonal through the center of $P_N$, then the diagonal $D$ must be of the form $D_{(i,i+\frac{N}{2})}$. It follows that 
\(\frac{N}{2}-1\in d\mathbb{Z}\). 
This means that 
\(\frac{n+1}{2}d\in d\mathbb{Z}\). 
Hence $n+1$ is even, or equivalently, $n$ is odd. In this case  $\eta(D)=D$ and $\frac{2\pi\deg\eta}{N}$ must be a multiple of $\pi$. It follows that $L_G=1$ or $L_G=2$.  If $L_G=1$, then clearly $L_G \mid (n,d+2)$. 

Now assume that \(W\) does not contain any \(d\)-diagonal through the center of
\(P_N\). Let \(C\) be the unique \((d+2)\)-gon in \(W\) with the center of
\(P_N\) inside it. It follows that \(\eta(C)=C\). Thus \(G\) acts on the set
of \(d+2\) edges of \(C\). In this case, \(G\) has no fixed edge. It follows
from Lemma \ref{lem-LG1} that each orbit has size divisible by \(L_G\). Hence
\(L_G\mid d+2\). Similarly \(G\) acts on the \(n\) \(d\)-diagonals in \(W\),
and \(L_G\mid n\). Thus \(L_G\mid(d+2,n)\). This proves the necessary
conditions in type \(\mathbb A\).
\end{proof}

\begin{proof}[Necessary part of Theorem \ref{theo-existence-dct} for type $\mathbb{D}$]
From Table \ref{tab:existence-conditions}, we can assume that  
\(L_G>1\), since $L_G=1$ occurs in the conditions for each case.

Let first $n\neq 4$ or let the automorphism part have order at most two. Thus
\(\eta=\tau^r\psi,\ \psi=\id \text{ or } \phi,\)
where $\phi$ exchanges the two branch vertices of $D_n$. Since
$\deg(\tau)=d$ and $\deg(\phi)=0$, the action induced by $\eta$ on the
geometric model of $\mathcal C_d(k\mathbb D_n)$ is the following. It rotates
$P_N^\circ$ by $\frac{2\pi rd}{N}$, and on tagged radii the tag is changed when $rd$ is odd (see \S\ref{subsec-geometric-model}); the automorphism $\phi$ changes
the tag once more.

For simplicity, put
\(L=L_G,\ g=(N,r),\ q=\frac{rL}{N}=\frac rg .\)
Since $(d,N)=1$, the order of $rd$ in $\mathbb Z/N\mathbb Z$ is the same as
the order of $r$; hence $L=N/g$ and $q$ is an integer coprime to $L$.

Suppose $\T$ has a $d$-cluster tilting object. Then there is a  $G$-stable $(d+2)$-angulation $W$ of
$P_N^\circ$. Equivalently, $W$ is the set of indecomposable summands of a
$G$-stable basic $d$-cluster tilting object in $\mathcal C_d(k\mathbb D_n)$. By definition, evidently every $(d+2)$-angulation of the punctured polygon contains a tagged radius.
Moreover, if $L>1$, no indecomposable object of $\mathcal C_d(k\mathbb D_n)$
is fixed by $G$. Thus all $G$-orbits occurring in \(W\) have cardinality
divisible by \(L\) by Lemma \ref{lem-LG1}. Since a basic \(d\)-cluster tilting
object of type \(D_n\) has exactly \(n\) indecomposable summands, we obtain
\(L\mid n\). Write
\(a=\frac nL.\)
Since $L\mid N$ and $L\mid n$, we also have
\(0\equiv N=(n-1)d+1\equiv 1-d \pmod L,\)
and therefore
\(d=Lb+1\)
for some integer $b\geq 0$. It follows that
\(g=\frac NL=b(n-1)+a.\)

We next record the extra restriction coming from tagged radii. Since \(W\)
contains a tagged radius and the endpoint rotation has order \(L\), the
\(G\)-orbit of this tagged radius must have exactly \(L\) elements. Indeed, if
the \(L\)-th iterate returned to the same endpoint with the opposite tag, then
the orbit would contain both tagged radii over one endpoint. Also, by Lemma \ref{lem-non-crossing}(3), for
distinct endpoints \(0\leq i\neq j<N\), the tagged radius \(D_{ii}^{+}\) is
\(d\)-orthogonal to precisely one of \(D_{jj}^{+}\) and \(D_{jj}^{-}\). Hence a
tagged-radius orbit occurring in \(W\) cannot have twice the endpoint orbit since we assume that $L>1$. Thus 
\(\eta^L(D_{00}^+)=D_{00}^+.\)
Define $\epsilon(\psi)$ to be $1$ when $\psi=\id$ and $-1$ when $\psi=\phi$. Then the tag changing rule  implies that  
\[(-1)^{rdL}\epsilon(\psi)^L=(-1)^{dNq}\epsilon(\psi)^L=1.\tag{$\star$}\]
The \(L\)-point orbit of a tagged radius in $W$ must be pairwise
\(d\)-orthogonal. By applying a power of \(\tau\), it is enough to test the
orbit of \(D_{00}^{+}\). For \(1\leq k\leq L-1\), let \(q_k\) be the
representative of \(-kq\) modulo \(L\) in \(\{1,\ldots,L-1\}\), and put
\(x_k=q_k g,\ m_k=\frac{kq+q_k}{L}.\)
Then the \(k\)-th non-trivial endpoint in the orbit is represented by \(x_k\),
and
\(kr=-x_k+m_kN.\)
Thus \(m_k\) records the number of full \(N\)-turns needed to write the orbit
point in the form \(\tau^{-x_k}(D_{00}^{\epsilon})\). Moreover, since
\(x_k=q_kg=q_kb(n-1)+q_ka\)
and
\(0<q_ka\leq (L-1)a=n-a\leq n-1,\)
we have
\(\left\lceil\frac{x_k}{n-1}\right\rceil=q_kb+1.\)
It follows that
\[
\begin{aligned}
x_k+\left\lceil\frac{x_k}{n-1}\right\rceil+1
&=q_kb(n-1)+q_ka+q_kb+1+1\\
&=q_kbn+q_ka+2\\
&=q_ka(Lb+1)+2\\
&=q_kad+2.
\end{aligned}
\]
Here we used \(n=La\) and \(d=Lb+1\) in the third equality. Thus the exponent
appearing in Lemma \ref{lem-non-crossing}(2) satisfies
\[
x_k+\left\lceil\frac{x_k}{n-1}\right\rceil+1
\equiv q_kad\pmod2.
\tag{$*$}
\]
Since
\(\tau^{m_kN}\) changes the tag by \((-1)^{dm_kN}\), we have
\(\eta^k(D_{00}^{+}) =\tau^{-x_k}\bigl(D_{00}^{(-1)^{dm_kN}\epsilon(\psi)^k}\bigr).\)
Write \(e(\psi)=0\) when \(\psi=\id\) and \(e(\psi)=1\) when
\(\psi=\phi\).  By Lemma \ref{lem-non-crossing}(2) and \((*)\), the
\(k\)-th orbit point is \(d\)-orthogonal to \(D_{00}^{+}\) if and only if
\[
dm_kN+e(\psi)k\equiv q_kad\pmod2.
\tag{$**$}
\]
Thus we have shown that the $G$-orbit of $D_{00}^+$ is pairwise $d$-orthogonal if and only if the condition $(\star)$ holds and $(**)$ holds for all $k=0,1,\cdots,L$. 

If \(\psi=\id\), then $e(\psi)=0$ and $\epsilon(\psi)=1$. Since we have proved that $L\mid n$, from Table \ref{tab:existence-conditions}, there are only two remaining cases. The conditions \((**)\) and  \((\star)\), is equivalent to:
\[
\begin{array}{ll}
 d \text{ even}: & L\mid n,\\
n \text{ even},\, d \text{ odd}:&
a=n/L\text{ is even},\\
n,d \text{ odd}:&
\text{\(q=\frac{rL}{N}\) is even},\\
\end{array}
\]
Indeed, if $d$ is even, then  $(\star)$ and $(**)$ hold for all $k$ automatically in this case.   Other than $L\mid n$, no further conditions are needed. This proved the first and the fourth cases. In the second case \(N\) is even, so the left hand
side of \((**)\) is zero and the condition is \(q_ka\equiv0\pmod2\) for every
\(k\).  If \(a\) is odd, then \(n=La\) and \(n\) even force \(L\) to be
even. Since \(q\) is coprime to \(L\), \(q\) is odd. The defining congruence
\(q_k\equiv-kq\pmod L\) may therefore be reduced modulo \(2\), giving
\(q_k\equiv k\pmod2\). Hence \(0\equiv q_ka\equiv q_k\equiv k\pmod2\) for all \(k\). This is impossible for $L>1$. Thus $a$ is even. When $a$ is even, since $N$ is even in this case, $(**)$ holds for all $k$ and $(\star)$ also holds.  In the third case \(L,a,d,N\) are odd, and
\((**)\) is equivalent to \(kq\equiv0\pmod2\) for every \(k\); together with
\((\star)\), this is precisely the condition that \(q\) is even.  

If \(\psi=\phi\), then $e(\psi)=1$, and the extra factor \(\epsilon(\psi)^k=(-1)^k\) gives:
\[
\begin{array}{ll}
d \text{ even}:&
\text{impossible for }L>1,\\
n \text{ even},\, d \text{ odd}:&
a=n/L\text{ is odd},\\
n,d \text{ odd}:&
\text{\(q=\frac{rL}{N}\) is odd},\\
\end{array}
\]
For $d$ even, \((**)\) would require \(k\equiv0\pmod2\) for all \(k\), so
it fails when \(L>1\). This proves the first case.  For \(n\) even and \(d\) odd, \(N\) is even and
\((**)\) becomes \(k\equiv q_ka\pmod2\). If \(a\) is even, then
\(q_ka\equiv0\pmod2\) for every \(k\), so the congruence already fails for
\(k=1\).  If \(a\) is odd, then \(n=La\) and \(n\) even force \(L\) to be
even. Since \(q\) is coprime to \(L\), \(q\) is odd. The defining congruence
\(q_k\equiv-kq\pmod L\) may therefore be reduced modulo \(2\), giving
\(q_k\equiv k\pmod2\). Hence \(q_ka\equiv q_k\equiv k\pmod2\) for all \(k\).
Thus \((**)\) holds for all \(k\) precisely when \(a\) is odd, and in this case $L$ and $N$ are even, thus $(\star)$ holds.  For \(n,d\) odd, the numbers \(L,a,d,N\) are all odd. Thus \((**)\) becomes
\(m_k+k\equiv q_k\pmod2\). Since \(m_kL=kq+q_k\) and \(L\) is odd, we have
\(m_k\equiv kq+q_k\pmod2\). Substitution gives
\(kq+q_k+k\equiv q_k\pmod2\), or equivalently
\(k(q+1)\equiv0\pmod2\) for all \(k\). Together with
\((\star)\), which in this case reads \((-1)^q(-1)^L=1\) and hence also
forces \(q\) to be odd, this is precisely the condition that \(q\) is odd.

This proves the necessary conditions for the order at most two automorphism
cases.

It remains to discuss the order-three automorphism in type $D_4$. If
$G=\langle\tau^r\phi_3\rangle$ with $o(\phi_3)=3$, then the $G$-orbit of a
branch object has length $\operatorname{lcm}(L_G,3)$. If $L_G>1$, Lemma
\ref{lem-LG1} forces $L_G\mid4$, so $L_G=2$ or $4$; but then
$\operatorname{lcm}(L_G,3)>4$, impossible for a basic $d$-cluster tilting
object of type $D_4$, which has only four indecomposable summands. Hence
$L_G=1$. This proves the necessary condition for the order-three case.
\end{proof}

\begin{proof}[Necessary part of Theorem \ref{theo-existence-dct} for type $\mathbb{E}$]
We work directly on the AR-quiver of \(\mathcal C_d(k\mathbb E_n)\), using the
labelling of the Dynkin diagram fixed in \S \ref{subsection-locally-finite}.
Let
\(h^*=h_{\mathbb E_n}^*\)
be the half Coxeter number, so \(h_{\mathbb E_6}^*=6\),
\(h_{\mathbb E_7}^*=9\), and \(h_{\mathbb E_8}^*=15\).  Write
\(N=N_d(\mathbb E_n)=dh^*+1.\)
The indecomposable objects are represented by vertices
\((i,j),\ i\in\mathbb Z/N\mathbb Z,\ 1\leq j\leq n,\)
and the AR-translation is \(\tau(i,j)=(i-1,j)\).  The shift is given by
\([1]=\tau^{-h^*}\phi,\)
where \(\phi\) is the non-trivial graph automorphism of \(\mathbb E_6\) and the
identity for \(\mathbb E_7\) and \(\mathbb E_8\).

We recall the finite calculation used below.  For a vertex \(x\), let
\(H^-(x)\) be the backward Hom-hammock, i.e. the set of vertices \(y\) with
\(\Hom(y,x)\neq0\).  It is obtained by the standard knitting recurrence.  For a
vertex \(x\), define
\[
B(x)=\bigcup_{t=1}^d H^-(x[t]),\quad
O(x)=\bigl(\mathbb Z/N\mathbb Z\times(\mathbb E_n)_0\bigr)\setminus B(x).
\]
Two vertices \(x,y\) are \(d\)-orthogonal if and only if either
\(y\in O(x)\) or \(x\in O(y)\).  

We now impose \(G\)-stability.  Suppose first that \(L=L_G>1\).  By Lemma
\ref{lem-LG1}, every \(G\)-orbit occurring in a \(G\)-stable basic
\(d\)-cluster tilting object has cardinality divisible by \(L\).  Since a
basic \(d\)-cluster tilting object in type \(\mathbb E_n\) has exactly \(n\)
indecomposable summands, we must have
\(L\mid n\). Also \(L\mid N\) by definition.  In type \(\mathbb E_6\), we have
\(n=6\) and \(N=6d+1\), so the only common divisor is \(1\).  Therefore
\(L=1\) in type \(\mathbb E_6\).  In type \(\mathbb E_7\), one obtains
\(L=1\) or \(7\).  In type \(\mathbb E_8\), one obtains
\(L\in\{1,2,4,8\}\).

It
remains to exclude the possible value \(L=8\) in type \(\mathbb E_8\).  We
 record the non-trivial \(L>1\) orbit calculation for
\(\mathbb E_7\) and \(\mathbb E_8\), which will also be used in the counting
theorem below. Write the generator as \(\tau^r\).
Since \((d,N)=1\), \(r\) and \(rd\) have the same order in
\(\mathbb Z/N\mathbb Z\). Since \(L\) is the order of \(rd\), putting \(a=N/L\)
gives \(\langle\tau^r\rangle=\langle\tau^a\rangle\).  We write
	\[
	\mathcal O_{s,j}^{(L)}
	=
	\{(s-ka,j)\mid k=0,\cdots,L-1\},
	\quad s\in\mathbb Z/a\mathbb Z .
	\]
	For the internal orthogonality test, we record
	the bad residues
	\[F_L(j) = \{m\in\mathbb Z/L\mathbb Z\mid (ma,j)\in B((0,j))\}.\]
The orbit $\mathcal O_{s,j}^{(L)}$ is internally pairwise $d$-orthogonal if and only if $F_L(j)=\varnothing$.

		We now spell out the membership test in \(B((0,j))\). Since we are now in
		type \(\mathbb E_7\) or \(\mathbb E_8\), the graph automorphism part is
		trivial, and
		\((0,j)[t]=(h^*t,j)\).  Define the same-level hammock residues
		\(C_j=\{b\in\{0,\ldots,h^*-1\}\mid (-b,j)\in H^-((0,j))\}.\)
The knitting recurrence gives the following \(C_j\)'s:
		\begin{center}
	\small
	\begin{tabular}{c|c|c}
	\toprule
	type&\(j\)&\(C_j\)\\
	\midrule
		\(\mathbb E_7\)&\(1\)&\(\{0,3,5,8\}\)\\
		\(\mathbb E_7\)&\(2,3,5\)&\(\{0,1,2,3,4,5,6,7,8\}\)\\
		\(\mathbb E_7\)&\(4\)&\(\{0,2,3,4,5,6,8\}\)\\
		\(\mathbb E_7\)&\(6\)&\(\{0,1,3,4,5,7,8\}\)\\
		\(\mathbb E_7\)&\(7\)&\(\{0,4,8\}\)\\
		\midrule
		\(\mathbb E_8\)&\(1\)&\(\{0,3,5,6,8,9,11,14\}\)\\
		\(\mathbb E_8\)&\(2,3,5,6\)&\(\{0,1,\ldots,14\}\)\\
		\(\mathbb E_8\)&\(4\)&\(\{0,2,3,4,5,6,7,8,9,10,11,12,14\}\)\\
		\(\mathbb E_8\)&\(7\)&\(\{0,1,4,5,6,8,9,10,13,14\}\)\\
		\(\mathbb E_8\)&\(8\)&\(\{0,5,9,14\}\)\\
		\bottomrule
		\end{tabular}
\vspace{0.5em}
\captionof{table}{Same-level Hom-hammock residues.}
\label{tab:exceptional-Cj}
		\end{center}
Since \(N=h^*d+1\), the integers \(h^*t-b\), with \(1\leq t\leq d\) and
		\(0\leq b\leq h^*-1\), lie between \(1\) and \(N-1\).  Therefore there is no
		wrap-around ambiguity, and
		\(
		(ma,j)\in B((0,j))\) if and only if \(ma=h^*t-b\) for some $1\leq t\leq d$ and $b\in C_j$.  
		Equivalently, if \(b_m\) denotes the unique residue in
		\(\{0,\ldots,h^*-1\}\) satisfying
		\(b_m\equiv -ma\pmod {h^*}\), 
		then $m\in F_L(j)$ if and only if \(b_m\in C_j\). Next impose \(L\mid N=h^*d+1\).  Write \(d=\omega L+l\), where
		\(0\leq l<L\).  Then \(l\) is the unique residue with
		\(L\mid lh^*+1\).  Thus 
			\(a=\frac{h^*d+1}{L} =h^*\omega+\frac{lh^*+1}{L}.\)
Write $\beta=\frac{lh^*+1}{L}$ for simplicity.  Then $b_m\equiv -ma\equiv -m\beta \pmod{h^*}$. Note that $l$ and $\beta$ can be easily computed once $L$ is known, as is shown in the following small table:
\begin{center}
\[
		\begin{array}{c|c|c|c|c}
	\toprule
	\text{case}&l&\beta&m&b_m\\
	\hline
	\mathbb E_7,\ L=7&3&4&1,2,3,4,5,6&5,1,6,2,7,3\\
	\mathbb E_8,\ L=2&1&8&1&7\\
	\mathbb E_8,\ L=4&1&4&1,2,3&11,7,3\\
	\mathbb E_8,\ L=8&1&2&1,2,3,4,5,6,7&13,11,9,7,5,3,1\\
	\bottomrule
		\end{array}
		\]
\captionof{table}{Residues \(b_m\).}
\label{tab:exceptional-bm}
\end{center}
Thus \(F_L(j)\) is obtained by keeping precisely those \(m\)'s whose displayed \(b_m\) belongs to \(C_j\).  The resulting tables are as follows. 
The resulting one-sided bad-residue tables are:
\begin{center}
	\[
	\begin{array}{c|ccccccc}
	\multicolumn{8}{c}{\mathbb E_7,\ L=7}\\
    \toprule
		j&1&2&3&4&5&6&7\\
		\hline
		F_7(j)&
		\{1,6\}&\{1,2,3,4,5,6\}&\{1,2,3,4,5,6\}&
		\{1,3,4,6\}&\{1,2,3,4,5,6\}&\{1,2,5,6\}&\varnothing\\
		\bottomrule
	\end{array}
	\]
\captionof{table}{The bad-residue sets \(F_7(j)\).}
\label{tab:E7-F7}
\end{center}
	and, for \(\mathbb E_8\),
	\begin{center}
	\small
	\begin{tabular}{c|c|c}
	\toprule
	\(L\)&\(j\)&\(F_L(j)\)\\
	\midrule
		\(2\)&\(1,7,8\)&\(\varnothing\)\\
		\(2\)&\(2,3,4,5,6\)&\(\{1\}\)\\
		\midrule
		\(4\)&\(1\)&\(\{1,3\}\)\\
		\(4\)&\(2,3,4,5,6\)&\(\{1,2,3\}\)\\
		\(4\)&\(7,8\)&\(\varnothing\)\\
		\midrule
		\(8\)&\(1\)&\(\{2,3,5,6\}\)\\
		\(8\)&\(2,3,5,6\)&\(\{1,2,3,4,5,6,7\}\)\\
		\(8\)&\(4\)&\(\{2,3,4,5,6\}\)\\
		\(8\)&\(7\)&\(\{1,3,5,7\}\)\\
		\(8\)&\(8\)&\(\{3,5\}\)\\
	\bottomrule
	\end{tabular}
\vspace{0.5em}
\captionof{table}{The bad-residue sets \(F_L(j)\) for type \(\mathbb E_8\).}
\label{tab:E8-FL}
	\end{center}
For \(\mathbb E_7\) and \(L=7\), the table shows that the only internally
orthogonal level is \(j=7\); this is compatible with the necessary condition
\(L=7\).  For \(\mathbb E_8\), the case \(L=8\) cannot happen, since all
\(F_8(j)\) are non-empty.  Indeed, a basic object has eight indecomposable
summands and every \(G\)-orbit has cardinality divisible by \(8\), so a
\(G\)-stable object would have to be a single internally pairwise
\(d\)-orthogonal orbit.  The table excludes such an orbit.  Hence only
\(L=1,2,4\) remain in type \(\mathbb E_8\).  This proves the necessary
conditions in  type $\mathbb{E}$.
\end{proof}

\subsection{Sufficiency and counting}\label{subsection-counting}

In this subsection, we show that the conditions in Table \ref{tab:existence-conditions} are also sufficient for the existence of $d$-cluster tilting objects, and give a counting theorem for the number of basic $d$-cluster tilting objects. We first recall the numerical input used in the counting formulas.  For a
finite Coxeter diagram \(\Omega\), let \(h_{\Omega}\) be its Coxeter number and
let \(d_1,\ldots,d_m\) be the degrees of the corresponding Coxeter group,  which can be
found, for instance, in \cite[Chapter 3]{Humphreys1990}. By Fomin--Reading \cite{FominReading2005}, the
\(d\)-Fuss--Catalan number  
\[\operatorname{Cat}^{(d)}(\Omega)=\prod_{i=1}^m\frac{dh_{\Omega}+d_i}{d_i}.\]
is the number of facets of the
generalized cluster complex of type \(\Omega\).  For Dynkin type, using the
cluster-combinatorial model of \(d\)-cluster categories in
\cite[Corollary 5.8]{Zhu2008}, it is also the number of basic \(d\)-cluster tilting objects in the corresponding $d$-cluster category.
For \(a\geq1\), write
\[C_a^{(d+1)}=\frac{1}{da+1}\binom{(d+1)a}{a}.\]
The following theorem records the corresponding counting formulas.  It also
proves the sufficiency part of Theorem \ref{theo-existence-dct}: the listed
formulas are positive in the cases allowed by Table
\ref{tab:existence-conditions}.
Throughout this subsection we count labelled \(G\)-stable basic
\(d\)-cluster tilting objects in \(\mathcal C_d(k\Delta)\).  We do not divide
by the action of \(G\).

\begin{Theo}\label{theo-counting-dct}
Let \(\Delta\) be a Dynkin diagram, and keep the notation of Theorem
\ref{theo-existence-dct}. Let \(Q(\Delta,d,L;\psi)\) be the number of labelled
basic \(G\)-stable \(d\)-cluster tilting objects in
\(\mathcal{C}_d(k\Delta)\), where \(\psi\) is the diagram automorphism part of
the generator of \(G\) and \(L=L_G\).

\begin{enumerate}
\item In type \(\mathbb A_n\), we have
\[
Q(\mathbb{A}_n,d,L)=
\begin{cases} 
    C_{n+1}^{(d+1)},& L=1,\\
\binom{\frac{(d+1)n+d+2}{L}-1}{\frac nL},
& L\geq2,\ L\mid n,\ L\mid(d+2),\\
\binom{\frac{(d+1)(n+1)}2}{\frac{n+1}2}, & L=2,\ n\text{ is odd},\\
0,& \text{otherwise.}
\end{cases}
\]
\item In type \(\mathbb D_n\), write \(\eta=\tau^r\psi\).  For \(L=1\),
\[
Q(\mathbb{D}_n,d,1;\psi)=
\begin{cases}
\operatorname{Cat}^{(d)}(\mathbb D_n),& \psi=\id,\\
\operatorname{Cat}^{(d)}(\mathbb B_{n-1}),&
\psi=\phi\text{ of order }2,\\
\operatorname{Cat}^{(d)}(\mathbb G_2),&
n=4,\ \psi=\phi_3\text{ of order }3.
\end{cases}
\]
If \(L=L_G>1\) and the corresponding row of Theorem
\ref{theo-existence-dct} is satisfied, then, with
\(a=n/L\) and \(N=N_d(\mathbb D_n)=(n-1)d+1\),
\[
Q(\mathbb{D}_n,d,L;\psi)=
\frac{2N}{L}\, C_a^{(d+1)}
=
\frac{2N}{L}\cdot
\frac{1}{da+1}\binom{(d+1)a}{a}.
\]

\item For type $\mathbb{E}$, the \(L=1\) numbers are
\[
Q(\mathbb{E}_n,d,1;\psi)=
\begin{cases}
\operatorname{Cat}^{(d)}(\mathbb E_n),& \psi=\id,\\
\operatorname{Cat}^{(d)}(\mathbb F_4),&
n=6,\ \psi=\phi\text{ is the non-trivial graph automorphism}.
\end{cases}
\]
For \(\mathbb E_7\),
\(Q(\mathbb E_7,d,7;\id)=\frac{9d+1}{7}\)
whenever \(7\mid 9d+1\).  For \(\mathbb E_8\),
\(Q(\mathbb E_8,d,2;\id)=((d+1)(15d+1)(5d+3)(15d+7))/64\)
when \(d\) is odd, and
\(Q(\mathbb E_8,d,4;\id)=\frac{(15d+1)(5d+3)}{16}\)
when \(d\equiv1\pmod4\).
\end{enumerate}
\end{Theo}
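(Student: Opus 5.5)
We count $G$-stable basic $d$-cluster tilting objects in $\mathcal C_d(k\Delta)$ type by type; by Lemma \ref{lem-dct-H-ctcat} these correspond to $d$-cluster tilting objects of $\mathcal T$. Positivity of the resulting formulas in the cases of Table \ref{tab:existence-conditions} then gives sufficiency in Theorem \ref{theo-existence-dct}.

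\emph{Case $L=1$.} Here $\deg\eta\equiv 0$, so $\eta$ acts on $\mathcal C_d(k\Delta)$ as the diagram automorphism $\psi$. Indeed, by Lemma \ref{lem-N}, $[N]\phi=\id$ and $\tau=\tau_{d+1}[d]$. If $\psi=\id$, every basic $d$-cluster tilting object is stable, and Zhu's count gives $\operatorname{Cat}^{(d)}(\Delta)$; in type $\mathbb A$ this is $C_{n+1}^{(d+1)}$, the number of $(d+2)$-angulations of $P_N$. If $\psi$ is a non-trivial folding automorphism, $\psi$-stable $d$-cluster tilting objects correspond to facets of the generalized cluster complex of the folded type: $\mathbb B_{n-1}$ for $\mathbb D_n$, $\mathbb G_2$ for $\mathbb D_4$ with $\phi_3$, and $\mathbb F_4$ for $\mathbb E_6$. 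This is Fomin--Reading's folding of $\Delta^{(d)}(\Phi)$. I would take this correspondence as known and identify the $\psi$-action on the cluster category with the folding action on colored almost positive roots.

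\emph{Type $\mathbb A$, $L\ge 2$.} Now $\eta$ acts as rotation of $P_N$ by $N/L$ steps, and we count rotation-invariant $(d+2)$-angulations. If there is a diameter, then $L=2$ and $n$ is odd. Cutting along the diameter reduces the count to half-polygon data: a choice of diameter times $(d+2)$-angulations of one side, which gives $\binom{(d+1)(n+1)/2}{(n+1)/2}$ after summing. Otherwise there is a central invariant $(d+2)$-gon, and the quotient by the rotation is a $(d+2)$-angulation of a polygon with a distinguished cone point of order $L$. I would count these with the standard Lagrange-inversion or cycle-lemma argument for rotationally symmetric dissections, as in Przytycki--Sikora or Eu--Fu on cyclic sieving for $m$-divisible dissections. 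This gives $\binom{((d+1)n+d+2)/L-1}{n/L}$. Alternatively, one can invoke the known cyclic sieving phenomenon of Eu--Fu for the $q$-Fuss--Catalan number, evaluated at an $L$-th root of unity, and check that the two subcases add up correctly.

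\emph{Type $\mathbb D$, $L\ge2$.} From the necessity proof, every summand lies in an orbit of size exactly $L$, and the invariant angulation contains exactly one orbit of tagged radii, of size $L$. Its choice is parametrized by a starting endpoint modulo the rotation, $N/L$ choices, and a tag, $2$ choices; the allowed tags are those satisfying $(\star)$ and $(**)$. The remaining $n-L$ ordinary arcs are cut by the radii into $L$ congruent regions, each a once-unpunctured polygon determined by one region. That region is a $(d+2)$-angulation problem of type $\mathbb A_{a-1}$, counted by $C_a^{(d+1)}$. The product is $\tfrac{2N}{L}C_a^{(d+1)}$.

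\emph{Type $\mathbb E$, $L>1$.} I would use Tables \ref{tab:exceptional-Cj}--\ref{tab:E8-FL} directly.
For $\mathbb E_7$ with $L=7$, the object must be a single orbit $\mathcal O^{(7)}_{s,7}$, since $F_7(j)=\varnothing$ only for $j=7$. Each of the $a=N/7$ orbits works, giving $(9d+1)/7$.
For $\mathbb E_8$ with $L=2,4$, a stable object is a union of internally orthogonal orbits on the levels with $F_L(j)=\varnothing$, namely $j\in\{1,7,8\}$ or $\{7,8\}$, together with possibly fixed-point-free orbits drawn from other levels when they are paired. The remaining task is to enumerate compatible families, using the cross-orbit orthogonality test from the hammocks $C_j$ applied to residues modulo $a$, and to sum the results as polynomials in $d$.

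The main obstacle is this $\mathbb E_8$ enumeration, together with the folding identification. For $\mathbb E_8$ I would first try to avoid brute force: identify $G$-stable objects with facets of a parabolic or reflection-fixed subcomplex, so that a Fomin--Reading-type product formula applies. As a check, the count should equal $\operatorname{Cat}^{(d)}$ evaluated at a primitive $L$-th root of unity via cyclic sieving (Armstrong, Krattenthaler--M\"uller). Invoking the Krattenthaler--M\"uller cyclic sieving theorem for generalized cluster complexes would in fact handle all $L>1$ counts uniformly. I would prove the statement that way, with the explicit geometric counts above as sanity checks.
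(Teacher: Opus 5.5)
\paragraph{Type $\mathbb D$, $L>1$.} Your count has a genuine gap. It is the same step the paper's proof relies on: ``$2N/L$ choices for \emph{the} tagged-radius orbit, then fill one sector.''

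A $G$-stable $(d+2)$-angulation need not contain only one $G$-orbit of tagged radii. Nor need the remaining arcs be ordinary. A sector between consecutive radii of the chosen orbit can be filled using radii at its interior endpoints, so a configuration whose radii form $k$ orbits is counted $k$ times in $\frac{2N}{L}C_a^{(d+1)}$.

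Concretely, take $\mathbb D_4$, $d=1$, $G=\langle\tau^2\rangle$. Then $N=4$, $L=2$, $a=2$, and the row ``$n$ even, $d$ odd, $n/L$ even'' holds. On the punctured square, $\tau^2$ is the half-turn with tags unchanged. Arcs $D_{i,i+3}$ cross their rotates, and radii at distinct endpoints are compatible only when their actual tags agree. So the $G$-stable objects are:
\begin{itemize}
\item the four plain radii;
\item the four notched radii;
\item for $i\in\{0,1\}$ and either tag, $D_{i,i+2},D_{i+2,i+4}$ together with the radii at $i$ and $i+2$.
\end{itemize}
That is $6$ objects, while $\frac{2N}{L}C_2^{(2)}=8$. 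In AR-coordinates, $(0,3)\oplus(1,4)\oplus(2,3)\oplus(3,4)$ is a $\tau^2$-stable cluster tilting object of $\mathcal C_1(k\mathbb D_4)$ with two radius orbits. The same $6$ versus $8$ occurs for $\mathbb D_6$, $d=1$, $G=\langle\tau^2\rangle$. So the displayed type $\mathbb D$ formula overcounts in general and cannot be proved as stated.

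\paragraph{The $L=1$ step.} This has a related flaw. Lemma~\ref{lem-N} gives $[N]=\phi$, not $[N]=\id$, for $\mathbb D_n$ with $n$ odd and for $\mathbb E_6$. Hence $\eta=\tau^r\psi$ with $N\mid r$ acts on $\mathcal C_d(k\Delta)$ as $\psi\phi^{rd/N}$, not as $\psi$. For example, $\langle\tau^5\rangle$ acts on $\mathcal C_1(k\mathbb D_5)$ as the tag swap. There are then $\operatorname{Cat}^{(1)}(\mathbb B_4)=70$ stable objects, not $\operatorname{Cat}^{(1)}(\mathbb D_5)=182$. So $\psi$ in the $L=1$ formulas must mean the induced action; the paper passes over this too.

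\paragraph{The cyclic sieving route.} This is genuinely different from the paper, which uses Lagrange inversion in type $\mathbb A$ and a finite residue/weak-ordering search for $\mathbb E_8$. Where it applies, it is much cleaner. In $\mathcal C_d(k\Delta)$ one has $\tau=[d]$, every type $\mathbb A$ generator is a power of $[1]$, and $\phi=[N]$ for $\mathbb D_n$ ($n$ odd) and $\mathbb E_6$. So every $G$ except the twisted ones on $\mathbb D_n$ with $n$ even acts through $\langle[1]\rangle$, i.e.\ through Fomin--Reading's $R$ (granting Zhu's identification). There the count is $\prod_i[dh_\Delta+d_i]_q/[d_i]_q$ at $q=e^{2\pi\sqrt{-1}\,k/(dh_\Delta+2)}$ when $\eta$ acts as $[k]$. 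This reproduces the paper's type $\mathbb A$ and type $\mathbb E$ numbers. For $\mathbb E_8$ with $L=2$, take $q=\sqrt{-1}$: exactly $30d+2,30d+14,30d+18,30d+30$ and $8,12,20,24$ are divisible by $4$, giving $(d+1)(15d+1)(5d+3)(15d+7)/64$. The cases $\mathbb E_8$, $L=4$ and $\mathbb E_7$, $L=7$ work the same way.

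It does not, however, settle ``all $L>1$ counts uniformly.'' The actions $\phi\circ[k]$ and $\phi_3\circ[k]$ on $\mathbb D_n$ ($n$ even) are not in $\langle[1]\rangle$. In type $\mathbb D$ the $q$-count of $\mathbb D_4$ at $q=\sqrt{-1}$ is $6$ in the example above, so your geometric ``sanity check'' would contradict the target formula rather than confirm it.

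Finally, in your $\mathbb E_8$ sketch, orbits on levels with $F_L(j)\neq\varnothing$ never occur, ``paired'' or not, because they are not internally orthogonal.
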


\begin{proof}
In the \(L=1\) cases with a non-trivial graph automorphism, the fixed
\(d\)-cluster tilting objects are counted by the folded types
\(
\mathbb D_n/\langle\phi\rangle=\mathbb B_{n-1},
\mathbb D_4/\langle\phi_3\rangle=\mathbb G_2\), and \(\mathbb E_6/\langle\phi\rangle=\mathbb F_4.
\)

We first discuss type \(\mathbb A_n\).  For \(L=1\), the count is the ordinary
Fuss--Catalan number \(C_{n+1}^{(d+1)}\).  Suppose \(L=2\) and \(n\) is odd.
Every \(G\)-stable \((d+2)\)-angulation contains a unique diameter: without a
diameter the necessary argument above gives \(2\mid n\), and two distinct
diameters cross.
Put \(a=(n+1)/2\).  There are \(N/2=ad+1\) diameters.  After choosing one of
them, the \(G\)-stable angulation is determined by a \((d+2)\)-angulation of one
of the two \((ad+2)\)-gons.  Hence the count is
\((ad+1)C_a^{(d+1)} = \binom{(d+1)a}{a} = \binom{\frac{(d+1)(n+1)}2}{\frac{n+1}2}.\)

Now suppose \(L\geq2\), \(L\mid n\), and \(L\mid(d+2)\).  Write
\(d+2=mL, n=qL\), 
and put \(M=N/L=dq+m\).  A \(G\)-stable angulation in this case has no diameter
and has a unique central \((d+2)\)-gon \(C\).  After replacing the generator by
a coprime power, we may assume that the rotation is \(i\mapsto i+M\).  Choose
one vertex orbit of \(C\) as a cut, and first normalize it to be represented by
the vertices \(0\) and \(M\).  In the interval from \(0\) to \(M\), the
representatives of the \(m=(d+2)/L\) side orbits of \(C\) have endpoints
\(0=i_0<i_1<\cdots<i_{m-1}<i_m=M\).  The whole central polygon has vertex set
\(\{i_\beta+kM\mid 0\leq\beta<m,\ 0\leq k<L\}\), where \(i_m\) is identified
with \(i_0+M\).

For \(1\leq j\leq m\), the side \((i_{j-1},i_j)\) of \(C\) is a \(d\)-diagonal.
Hence there is a unique non-negative integer \(s_j\) such that
\(i_j-i_{j-1}=ds_j+1\).
Summing over \(j\) gives
\(s_1+\cdots+s_m=(M-m)/d=q\), and conversely any such \(m\)-tuple determines
the vertices by \(i_j=j+d(s_1+\cdots+s_j)\).  The side
\((i_{j-1},i_j)\), together with the boundary interval from \(i_{j-1}\) to
\(i_j\), bounds a polygon \(P_j\) with \(ds_j+2\) vertices.  Thus \(P_j\) is
filled by a rooted \((d+2)\)-angulation counted by \(t_{s_j}\), where \(t_s\)
denotes the number of rooted \((d+2)\)-angulations of a polygon with \(ds+2\)
vertices.  Therefore, for this fixed cut, the count is
\(\sum_{s_1+\cdots+s_m=q}t_{s_1}\cdots t_{s_m}\). Let \(T(z)=\sum_{s\geq0}t_s z^s\).  The term \(t_0=1\) represents the empty
piece attached to a side.  If the rooted piece is non-empty, the unique
\((d+2)\)-gon adjacent to the root side contributes one factor \(z\), and its
other \(d+1\) sides carry \(d+1\) independent ordered rooted pieces.  Hence
\(T(z)=1+zT(z)^{d+1}\).  The above sum over all
\(s_1+\cdots+s_m=q\) is exactly the coefficient of $z^q$ in \(T(z)^m\), denoted by $[z^q]T(z)^m$. There are \(M=dq+m\) possible choices for the representative of the cut vertex
in the quotient polygon, and each labelled \(G\)-stable angulation is counted
\(m\) times, once for each vertex orbit of its central polygon.  Therefore the
desired count is \(\frac{M}{m}[z^q]T(z)^m\).  Put \(U=T-1\).  Then
\(U=z(1+U)^{d+1}\), and Lagrange inversion (see, for example,
\cite{Gessel2016,SuryaWarnke2023}) gives
\[[z^q]T(z)^m=[z^q](1+U)^m=\frac{m}{q}[u^{q-1}](1+u)^{(d+1)q+m-1}.\]
Hence
\([z^q]T(z)^m=\frac{m}{(d+1)q+m}\binom{(d+1)q+m}{q}
=\frac{m}{dq+m}\binom{(d+1)q+m-1}{q}.\)
Thus the count is
\[
\frac{dq+m}{m}\cdot
\frac{m}{dq+m}\binom{(d+1)q+m-1}{q}
=
\binom{(d+1)q+m-1}{q}
=
\binom{\frac{(d+1)n+d+2}{L}-1}{\frac nL}.
\]

For type \(\mathbb D_n\) with \(L>1\), it follows from the proof of the necessary part of Theorem \ref{theo-existence-dct} for type $\mathbb{D}_n$ that the conditions listed in Theorem \ref{theo-existence-dct} are sufficient and necessary conditions for the $G$-orbit of $D_{00}^{+}$ to be pairwise $d$-orthogonal and to have size $L$. For such an orbit, to prove the existence of $d$-cluster tilting objects, it is enough to extend the $G$-orbit of tagged radii to a $(d+2)$-angulation of $P_N^{\circ}$. The
\(G\)-orbit of tagged radii has \(L\) members and divides the punctured polygon
into \(L\) sectors.  There are \(2N/L\) choices for this tagged-radius orbit.
After choosing the filling of one sector by a type \(\mathbb A_{a-1}\)
\(d\)-cluster tilting object, where \(a=n/L\), \(G\)-stability determines the
fillings of all remaining sectors.  This gives the displayed formula.
All remaining type \(\mathbb A\) and type \(\mathbb D\) cases have count zero
by the necessary part proved above.

Finally consider type $\mathbb{E}$.  The non-trivial \(L>1\) cases are
precisely the orbit-graph cases computed in the exceptional necessary
calculation above.  For \(\mathbb E_7,L=7\), the objects are
exactly
\(
\{(s-ka,7)\mid k=0,\ldots,6\},  0\leq s<(9d+1)/7,
\)
so the count is \((9d+1)/7\).

It remains to consider $L=2,4$ for $\mathbb{E}_8$.  Put \(h^*=15\),
\(N=15d+1\), and \(a=N/L\).  We use the following orbit graph.  Its vertices
are the internally orthogonal \(G\)-orbits \(\mathcal O_{s,j}^{(L)}\), where
\(s\in \mathbb Z/a\mathbb Z\).  Two such vertices are joined when every object
in one orbit is \(d\)-orthogonal to every object in the other orbit.  A
\(G\)-stable \(d\)-cluster tilting object in type \(\mathbb E_8\) is then the
same thing as a clique of size \(8/L\) in this orbit graph. Let
\(C_{i,j}=\{0\leq b<h^*\mid (-b,i)\in H^-((0,j))\},\quad i,j=1,7,8.\)
The Hom-hammock calculation gives the following \(3\times3\) matrix, whose rows
are indexed by \(i\in\{1,7,8\}\) and columns by \(j\in\{1,7,8\}\):
\begin{center}
\[
\renewcommand{\arraystretch}{1.15}
\begin{array}{c|ccc}
\toprule
C_{i,j}
& j=1 & j=7 & j=8\\
\hline
i=1&
\{0,3,5,6,8,9,11,14\}&
\{2,3,5,6,7,8,10,11\}&
\{2,5,7,10\}\\
i=7&
\{3,4,6,7,8,9,11,12\}&
\{0,1,4,5,6,8,9,10,13,14\}&
\{0,4,5,8,9,13\}\\
i=8&
\{4,7,9,12\}&
\{1,5,6,9,10,14\}&
\{0,5,9,14\}\\
\bottomrule
\end{array}
\]
\captionof{table}{The Hom-hammock residue matrix.}
\label{tab:E8-Cij}
\end{center}
We spell out how this matrix is used to construct the orbit graph.   For two orbit vertices
\(\mathcal O_{s,i}^{(L)}\) and \(\mathcal O_{t,j}^{(L)}\), write $\delta=(t-s)_a\in \mathbb{Z}/a\mathbb{Z}$. By applying $\tau^{s}$, the two orbits are $d$-orthogonal if and only if \(\mathcal O_{0,i}^{(L)}\) and \(\mathcal O_{\delta,j}^{(L)}\) are $d$-orthogonal. If $\delta=0$ and $i=j$, then the two orbits coincide. Now we assume that either $\delta>0$ or $\delta=0$ and $i\neq j$ so that we have two distinct orbits.  In a \(d\)-cluster category, the vanishing of \(\Hom(X,Y[u])\) for \(1\leq u\leq d\) is equivalent to the vanishing of \(\Hom(Y,X[u])\) for \(1\leq u\leq d\).  Thus it is enough to test one direction.  Since \(0\leq \delta+ma<N\) for all \(0\leq m<L\), the two orbits \(\mathcal O_{0,i}^{(L)}\) and \(\mathcal O_{\delta,j}^{(L)}\) are $d$-orthogonal if and only if 
\((\delta+ma,j)\notin H^-((15u,i)), \quad 1\leq u\leq d, 0\leq m< L.\)
For a term with \(\delta+ma>0\), this is equivalent to 
\(-(\delta+ma)\notin C_{j,i} \pmod{15}.\)
If \(\delta=0\), the term \(m=0\) has horizontal coordinate \(0\) and is not of the form \(15u-b\) with \(1\leq u\leq d\) and \(0\leq b\leq14\); hence this term is omitted in the residue test. From Table \ref{tab:exceptional-bm}, we have \(a\equiv 8\pmod{15}\) for \(L=2\) and \(a\equiv 4\pmod{15}\) for \(L=4\). Hence, whether two orbits $\mathcal O_{s,i}^{(L)}$ and $\mathcal O_{t,j}^{(L)}$ are $d$-orthogonal is completely determined by the residue of $\delta$ modulo $15$, together with the special \(\delta=0\) convention above.

Case (i): \(L=2\).  Here \(d=2p+1\) and \(a=15p+8\).  By the table of
\(F_L(j)\), the possible orbit levels are \(1,7,8\).  The above edge test gives
the following table.  In the row \((i,j)\), with \(i\leq j\), the set \(D_{i,j}\)
contains precisely the residues of positive differences 
\(\delta=(t-s)_a\) for which
the orbits $\mathcal O_{s,i}^{(2)}$ and $\mathcal O_{t,j}^{(2)}$ are $d$-orthogonal; the last column records
whether $\mathcal O_{0,i}^{(2)}$ and $\mathcal O_{0,j}^{(2)}$ are $d$-orthogonal when $i\neq j$. 
\begin{center}
\[
\begin{array}{c|c|c}
\toprule
(i,j)&D_{i,j}&\delta=0\\
\hline
(1,1)&\{3,5\}&\text{-}\\
(1,7)&\{2,5\}&\text{no}\\
(1,8)&\{1,2,4,5,7,9,12,14\}&\text{no}\\
(7,7)&\{4\}&\text{-}\\
(7,8)&\{0,3,4,7,11\}&\text{yes}\\
(8,8)&\{3,4,5,9,11,12,14\}&\text{-}\\
\bottomrule
\end{array}
\]
\captionof{table}{Edge residues \(D_{i,j}\) for \(L=2\).}
\label{tab:E8-L2-Dij}
\end{center}
Let us spell out the first row.  Since \(C_{1,1}=\{0,3,5,6,8,9,11,14\}\)
and \(a\equiv8\pmod {15}\), two $d$-orthogonal orbits
\(\mathcal O_{s,1}^{(2)}\) and \(\mathcal O_{t,1}^{(2)}\), with
\(\delta=(t-s)_a>0\) and \(r\equiv\delta\pmod {15}\), are tested by the two
points \((\delta,1)\) and \((\delta+a,1)\) in
\(\mathcal O_{\delta,1}^{(2)}\).  By the one-direction Hom-hammock criterion
above, these two points must both avoid \(B((0,1))\).  Equivalently,
\(-r, -(r+8)\notin C_{1,1}\pmod{15}\). It is straightforward to check that the forbidden residues are 
\(\{0,1,2,4,6,7,8,9,10,11,12,13,14\}.\)
The allowed residues are therefore \(\{3,5\}\). The other case can be done similarly. 

Although the number of vertices of the orbits graph depends on
\(a=15p+8\), the edge relation depends only on residues modulo \(15\) and on
the special \(\delta=0\) column.  Thus the above table gives a finite way to
find all four-cliques. The process goes as follows.  Partition
\(\{0,\ldots,a-1\}\) by residues modulo \(15\).  
If we define 
\(\mu_r=\{0\leq s<a\mid s\equiv r\pmod {15}\},\)
then
\[|\mu_r|=
\begin{cases}
p+1,&0\leq r\leq7,\\
p,&8\leq r\leq14.
\end{cases}
\]
For each possible level multiset \(J=\{j_1,j_2,j_3,j_4\}\), choose once and
for all an ordered representative
\(j_1\leq j_2\leq j_3\leq j_4 .\)
The finite residue search is then implemented as follows.  Take a residue
tuple
\(R=(r_1,r_2,r_3,r_4)\in(\mathbb Z/15\mathbb Z)^4.\)
Write every actual lift in the form
\[
s_\nu=r_\nu+15q_\nu,\quad
0\leq q_\nu\leq p\text{ if }r_\nu\leq7,\quad
0\leq q_\nu\leq p-1\text{ if }r_\nu\geq8.
\]
Instead of running through all possible \(q_\nu\)'s, enumerate the finitely many
weak orderings of the four actual numbers \(s_1,s_2,s_3,s_4\).  Such a weak
ordering means an ordered partition
\(B_1\mid B_2\mid\cdots\mid B_\ell\)
of the index set \(\{1,2,3,4\}\).  It represents the relations
\[
s_\nu=s_\lambda\quad\text{if }\nu,\lambda\in B_m,
\quad
s_\nu<s_\lambda\quad\text{if }\nu\in B_m,\ \lambda\in B_{m'},\ m<m'.
\]
Equivalently, it is a total ordering of the equality classes of
\(\{s_1,s_2,s_3,s_4\}\), allowing ties inside each class.
Discard a weak ordering if two equal-level vertices lie in the same block,
because we impose the convention
\(s_\nu<s_\lambda\text{ whenever } \nu<\lambda \text{ and }j_\nu=j_\lambda .\)
Also discard it if two indices in the same block have different residues,
since then the equality \(s_\nu=s_\lambda\) is impossible.

For every remaining weak ordering and every pair \(\nu<\lambda\), the residue
of \(\delta_{\nu\lambda}=(s_\lambda-s_\nu)_a\) is known without choosing the
actual \(q_\nu\)'s:
\[
\delta_{\nu\lambda}\equiv
\begin{cases}
0,& s_\nu=s_\lambda,\\
r_\lambda-r_\nu,& s_\nu<s_\lambda,\\
r_\lambda-r_\nu+8,& s_\lambda<s_\nu
\end{cases}
\pmod {15}.
\]
Here the \(+8\) comes from adding \(a\equiv8\pmod {15}\) when
\(s_\lambda-s_\nu<0\).  The pair passes the edge test as follows.  If
\(s_\nu=s_\lambda\), then \(\delta_{\nu\lambda}=0\) and we use the last column
of the table.  If \(s_\nu\neq s_\lambda\), then
\(\delta_{\nu\lambda}>0\), and we require the displayed residue, possibly
\(0\), to belong to \(D_{j_\nu,j_\lambda}\).  A weak ordering is admissible if
all six pairs pass this test.

For an admissible weak ordering, count the integer solutions in the
\(q_\nu\)'s which realize it.  The equalities inside a block give equations
\(q_\nu=q_\lambda\), while an inequality
\(r_\nu+15q_\nu<r_\lambda+15q_\lambda\)
is equivalent to \(q_\nu\leq q_\lambda\) if \(r_\nu<r_\lambda\), and to
\(q_\nu<q_\lambda\) if \(r_\nu\geq r_\lambda\).  Thus each admissible weak
ordering contributes the number of integer points of a fixed finite system of
linear inequalities with bounds \(0\leq q_\nu\leq p\) or \(p-1\).  This number
is obtained by elementary finite sums, hence is a polynomial in \(p\) with degree at most $4$.  Summing
these polynomials over all admissible weak orderings gives the contribution of
the residue tuple \(R\), and summing over all \(15^4\) residue tuples gives the
count for the level multiset \(J\).
 
This finite residue search gives the following
complete list of non-zero level multisets:
\begin{center}
\[
\renewcommand{\arraystretch}{1.15}
\begin{array}{c|c}
\toprule
\text{level multiset}&\text{number of four-cliques}\\
\hline
\{1,1,7,8\}&15p^4+68p^3+107p^2+70p+16\\
\{1,1,8,8\}&75p^4+235p^3+269p^2+133p+24\\
\{1,7,8,8\}&90p^4+288p^3+338p^2+172p+32\\
\{1,8,8,8\}&50p^4+\frac{350}{3}p^3+88p^2+\frac{64}{3}p\\
\{7,7,8,8\}&30p^4+106p^3+138p^2+78p+16\\
\{7,8,8,8\}&15p^4+38p^3+31p^2+8p\\
\{8,8,8,8\}&\frac{25}{4}p^4+\frac{65}{6}p^3+\frac{21}{4}p^2+\frac{2}{3}p.\\
\bottomrule
\end{array}
\] 
\captionof{table}{Four-clique counts for  \(L=2\).}
\label{tab:E8-L2-clique-counts}
\end{center}
Summing the displayed rows gives
\(((p+1)(15p+8)(5p+4)(15p+11))/4\).
Since \(d=2p+1\), this is
\(((d+1)(15d+1)(5d+3)(15d+7))/64\).

Case (ii): \(L=4\).  Here \(d=4p+1\) and \(a=15p+4\).  The possible orbit
levels are \(7\) and \(8\), so we only need to count edges.  The edge table is
\begin{center}
\[
\begin{array}{c|c|c}
\toprule
(i,j)&D_{i,j}&\delta=0\\
\hline
(7,7)&\varnothing&\text{no}\\
(7,8)&\{0,3,7,11\}&\text{yes}\\
(8,8)&\{5,14\}&\text{no}.\\
\bottomrule
\end{array}
\]
\captionof{table}{Edge residues for   \(L=4\).}
\label{tab:E8-L4-Dij}
\end{center}
Thus there are no edges between two distinct level \(7\) orbits.  For a fixed
level \(7\) orbit, the allowed level \(8\) differences are
\(\delta=0,\ 1\leq\delta<a,\ \delta\equiv0,3,7,11\pmod {15}.\)
There are \(4p+2=d+1\) such differences, and hence the number of mixed
\((7,8)\)-edges is \(a(d+1)\).  On level \(8\), the allowed positive
differences have residues \(5\) or \(14\).  There are \(2p\) such differences,
and the two opposite differences define the same unordered edge, so the number
of level \(8\)-edges is \(ap\).  Therefore the total number of edges is
\(a(d+1)+ap=a(5p+2) =\frac{(15d+1)(5d+3)}{16}.\)
The \(L=8\) case has no internally orthogonal orbit, and the remaining
exceptional cases are excluded by the necessary part proved above.  The
non-zero formulas in this theorem are positive exactly in the cases listed in
Table \ref{tab:existence-conditions}; by Lemma \ref{lem-dct-H-ctcat}, they give
\(d\)-cluster tilting objects in \(\T\).  This completes the sufficiency part
of Theorem \ref{theo-existence-dct}.
\end{proof}

\begin{Example}
    For instance, take \(J=(1,1,7,8)\) and \(R=(0,3,5,4)\).  For the basic lift
\((s_1,s_2,s_3,s_4)=(0,3,5,4)\), the six tests read
\(3-0\in D_{1,1},\ 5-0,\ 5-3\in D_{1,7},\ 4-0,\ 4-3\in D_{1,8},\)
and, for the last pair,
\((4-5)_a=4-5+a\equiv 7\pmod {15},\ 7\in D_{7,8}.\)
Hence
\[
\mathcal{O}_{0,1}^{(2)},\quad
\mathcal{O}_{3,1}^{(2)},\quad
\mathcal{O}_{5,7}^{(2)},\quad
\mathcal{O}_{4,8}^{(2)}
\]
form a four-clique.  The corresponding weak ordering is
\(s_1<s_2<s_4<s_3.\)
Since all four residues are between \(0\) and \(7\), the associated inequalities
are
\(0\leq q_1\leq q_2\leq q_4\leq q_3\leq p.\)
This weak-ordering chamber therefore contributes
\(\binom{p+4}{4}\)
cliques.  For this residue tuple, the other admissible weak orderings are
\(s_1<s_4<s_2<s_3,\ s_4<s_1<s_2<s_3.\)
They give respectively
\(0\leq q_1\leq q_4<q_2\leq q_3\leq p,\ 0\leq q_4<q_1\leq q_2\leq q_3\leq p,\)
and hence each contributes
\(\binom{p+3}{4}\)
cliques.  No other weak ordering satisfies the six edge tests.  Therefore the
total contribution of \(J=(1,1,7,8)\) and \(R=(0,3,5,4)\) is
\(\binom{p+4}{4}+2\binom{p+3}{4} =\frac{(p+1)(p+2)(p+3)(3p+4)}{24}.\)
\end{Example}

\subsection{$(d+1)$-Calabi--Yau categories}

In this subsection, we consider locally finite $(d+1)$-Calabi--Yau triangulated categories. This means that the automorphism $\tau_{d+1}$ belongs to $G$. 

\begin{Coro}\label{coro-CY-generator-conditions}
Let \(\T\) be connected locally finite triangulated $k$-category, and suppose that its AR-quiver is
isomorphic to \(\mathbb Z\Delta/G\), where \(G=\langle\eta\rangle\) is non-trivial and weakly
admissible. Assume that \(\T\) is \((d+1)\)-Calabi--Yau. Then Theorem
\ref{theo-existence-dct} gives the following generator conditions for the existence of $d$-cluster tilting objects in \(\T\):
\begin{center}
\renewcommand{\arraystretch}{1.25}
\begin{tabular}{p{0.04\textwidth}|p{0.14\textwidth}|p{0.05\textwidth}|p{0.22\textwidth}|p{0.26\textwidth}}
\toprule
\(\Delta\) & & \(\eta\) &
\((d+1)\)-CY condition & condition for existence\\
\hline
\(\mathbb A_n\)
& \(n\) even
& \(\rho^r\)
& \(r\mid N\)
& \(N/r\mid\gcd(n,d+2)\)\\
\hline
\(\mathbb A_n\)
& \(n\) odd, \(d\) even
& \(\tau^r\)
& \(2r\mid N\)
& \(N/(2r)\mid\gcd(n,d+2)\)\\
\hline
\(\mathbb A_n\)
& \(n\) odd, \(d\) odd
& \(\tau^r\phi\)
& \(2r\mid N\), \(N/(2r)\) odd
& \(N/(2r)\mid\gcd(n,d+2)\)\\
\hline
\(\mathbb D_n\)
& \(n,d\) even
& \(\tau^r\)
& \(r\mid N\)
& \(L\mid n\)\\
\hline
\(\mathbb D_n\)
& \(n\) even, \(d\) odd
& \(\tau^r\)
& \(r\mid N\)
& \(L\mid n\) and \(n/L\) is even\\
\hline
\(\mathbb D_n\)
& \(n\) even, \(d\) odd
& \(\tau^r\phi\)
& \(r\mid N\), \(L\) even
& \(L\mid n\) and \(n/L\) is odd\\
\hline
\(\mathbb D_n\)
& \(n\) odd, \(d\) even
& \(\tau^r\)
& \(r\mid N\)
& \(L\mid n\)\\
\hline
\(\mathbb D_n\)
& \(n\) odd, \(d\) odd
& \(\tau^r\phi\)
& \(r\mid N\), \(L\) odd
& \(L\mid n\)\\
\hline
\(\mathbb E_6\)
& \(d\) even
& \(\tau^r\)
& \(r\mid N\)
& \(L=1\)\\
\hline
\(\mathbb E_6\)
& \(d\) odd
& \(\tau^r\phi\)
& \(r\mid N\), \(L\) odd
& \(L=1\)\\
\hline
\(\mathbb E_7\)
& 
& \(\tau^r\)
& \(r\mid N\)
& \(L\in\{1,7\}\)\\
\hline
\(\mathbb E_8\)
& 
& \(\tau^r\)
& \(r\mid N\)
& \(L\in\{1,2,4\}\)\\
\hline
\multicolumn{5}{c}{\parbox{0.88\textwidth}{\(\mathbb A_n\): if \(n\) is odd,
\(\phi=\tau^{(n+1)/2}S\); if \(n\) is even, \(\rho=\tau^{n/2}S\).
For \(\mathbb D_n\) and \(\mathbb E_6\), \(\phi\) is the distinguished diagram
involution; $L=N/r$.}}\\
\bottomrule
\end{tabular}
\vspace{.5em}
\captionof{table}{$(d+1)$-Calabi--Yau locally finite triangulated categories.}
\label{tab:CY-conditions}
\end{center} 
\end{Coro}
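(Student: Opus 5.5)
The plan is to specialise Theorem \ref{theo-existence-dct} row by row. There are two steps: translate the $(d+1)$-Calabi--Yau hypothesis into an arithmetic condition on the generator $\eta$, and then compute $L_G$ in each case.

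\textbf{Step 1: the Calabi--Yau condition.} The hypothesis $\tau_{d+1}\in G=\langle\eta\rangle$ means that $\tau_{d+1}=\eta^{m}$ for some integer $m$. By Lemma \ref{lem-N}, in types $\mathbb D,\mathbb E$ we have $\tau_{d+1}=\tau^{N}\phi^d$. In type $\mathbb A$ we use $S$ from \S\ref{subsection-locally-finite} to write $\tau_{d+1}=\tau S^{-d}$ in terms of $\rho$ or of $\tau$ and $\phi$.
\begin{itemize}
\item For $n$ even, $\rho^2=\tau^{n}S^2=\tau^{-1}$ and $\rho^{-(n+1)}=S$ hold up to the standard identities, so $\tau_{d+1}=\rho^{N}$. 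Hence $\tau_{d+1}\in\langle\rho^r\rangle$ if and only if $r\mid N$.
\item For $n$ odd, $\phi^2=\id$ and $S=\tau^{-(n+1)/2}\phi$. This gives $\tau_{d+1}=\tau^{N/2}\phi^{d}$.
\end{itemize}
Comparing the pure $\tau$-part and the automorphism part, $\eta=\tau^r$ requires $\phi^d=\id$ (so $d$ even) and $r\mid N/2$. For $\eta=\tau^r\phi$ we need $\tau^{N/2}\phi^d=\tau^{rm}\phi^m$, which forces $m=N/(2r)$ with $m\equiv d\pmod 2$.

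The same comparison in type $\mathbb D_n$ with $n$ odd, and in type $\mathbb E_6$, shows that $\tau^r$ is allowed exactly when $d$ is even and $r\mid N$. It shows that $\tau^r\phi$ is allowed exactly when $N/r\equiv d\pmod 2$. Since $N=(n-1)d+1$, or $N=6d+1$ in type $\mathbb E_6$, is odd in those cases, this parity condition means $L=N/r$ is odd and $d$ is odd. For $n$ even, $\phi^d$ is absent. Then $\tau^r$ needs $r\mid N$, and $\tau^r\phi$ needs $r\mid N$ with $N/r$ even.

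I must also check weak admissibility. For instance, $\tau^r\phi$ with $d$ even is not $(d+1)$-CY in these types. This check is what excludes the omitted rows, for example $\tau^r$ in type $\mathbb A_n$ with $n$ and $d$ both odd.

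\textbf{Step 2: computing $L_G$.} Once $r\mid N$ (or $2r\mid N$ in type $\mathbb A_n$ with $n$ odd), compute $\deg\eta$:
\begin{itemize}
\item $\deg\tau^r=rd$ and $\deg\phi=0$ in types $\mathbb D,\mathbb E$;
\item in type $\mathbb A$, $\deg\rho=\tfrac n2 d+1$ and $\deg\phi=\tfrac{n+1}2d+1=N/2$.
\end{itemize}
Since $(d,N)=1$, we get $L=N/r$ in types $\mathbb D,\mathbb E$.

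In type $\mathbb A$, the relation $\rho^{N}=\tau_{d+1}$ shows that $\deg\rho$ is a unit of order $N$. Hence $L=N/r$ for $\eta=\rho^r$. For $\eta=\tau^r$ with $n$ odd, $\deg\eta=rd$ has order $N/(r,N)$. This equals $N/r$, which is even given $2r\mid N$.

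\textbf{Main obstacle.} The step I expect to be hardest is the type $\mathbb A$, $n$ odd bookkeeping. There I must reconcile $L=N/r$ or $N/(2r)$ with the table entries, and check that the alternative "$L_G=2$, $n$ odd" of Theorem \ref{theo-existence-dct} is either subsumed or excluded. For $\tau^r\phi$, with $\deg=rd+N/2$ and $2r\mid N$, I will compute that the order is $N/(2r)$ when $N/(2r)$ is odd.

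\textbf{Step 3: substituting into Table \ref{tab:existence-conditions}.} Plug these values of $L$ into the table:
\begin{itemize}
\item In the $\mathbb D$ rows, the parity constraints from Step 1 make the $r L/N$ conditions automatic: $q=rL/N=1$ is odd. This leaves "$L\mid n$".
\item For $\tau^r$ with $n$ and $d$ odd, Step 1 shows this case is not Calabi--Yau, so the row is omitted.
\item The $\mathbb E$ rows are immediate.
\end{itemize}
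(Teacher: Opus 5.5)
Your overall route is the paper's: identify the $(d+1)$-Calabi--Yau condition with $\tau_{d+1}\in G$, write $\tau_{d+1}$ in terms of the generators, read off divisibility and parity, compute $L_G$, and substitute into Table~\ref{tab:existence-conditions}. Step~2, however, contains a concrete error that produces a wrong row.

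\textbf{The error.} Take $\eta=\tau^r$ in type $\mathbb A_n$ with $n$ odd; your Step~1 correctly forces $d$ to be even here. You assert that $\deg\eta=rd$ has order $N/(r,N)=N/r$. This silently uses $(d,N)=1$. That holds in types $\mathbb D,\mathbb E$, where $N=h^*_\Delta d+1$, but fails in type $\mathbb A$: there $N=(n+1)d+2$, so $(d,N)=(d,2)=2$ for even $d$.

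\textbf{The correct computation.} Write $N=2rk$. Then $(rd,N)=r\,(d,2k)=2r\,(d/2,k)$. Since $k$ divides $N/2=\frac{n+1}{2}d+1$, which is coprime to $d$, we get $(rd,N)=2r$. Hence $L_G=N/(2r)$, as in the table.

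\textbf{Why your value gives the wrong row.} With your value $L=N/r=2k$, $L$ is even while $n$ is odd. So $L\mid\gcd(n,d+2)$ never holds, and only the alternative $L=2$ survives, i.e.\ $G=\langle\tau_{d+1}\rangle$. Concretely, take $n=3$, $d=4$, $\eta=\tau^3$. Then $N=18$, $\tau_5=\tau^9=\eta^3$, and $\deg\eta=12$ has order $3$ in $\mathbb Z/18\mathbb Z$. Since $3\mid\gcd(3,6)$, a $4$-cluster tilting object exists, whereas your $L=6$ predicts none. Even when $r=N/2$ your value $L=2$ is wrong: $G$ then acts trivially on $\mathcal C_d(k\mathbb A_n)$, so $L_G=1$.

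The ``reconciliation of $N/r$ with $N/(2r)$'' you defer to your main-obstacle paragraph is exactly this mistake. Once it is fixed, $L_G=N/(2r)$ is odd in both $n$-odd rows. When $d$ is even it divides the odd number $N/2$; when $d$ is odd it is odd by the Calabi--Yau condition. This oddness is what rules out the alternative ``$L_G=2$, $n$ odd'' of Theorem~\ref{theo-existence-dct}, a check you leave undone.

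\textbf{Smaller points.}
\begin{itemize}
\item For $n$ even, the identities are $\tau=\rho^{-2}$ and $S=\rho^{n+1}$, not $\rho^{-(n+1)}$, so $\tau_{d+1}=\rho^{-N}$. The sign is harmless for ``$r\mid N$''.
\item Your reason that $\deg\rho$ is a unit does not work. Since $\deg\tau_{d+1}=0$, the relation $\rho^{\pm N}=\tau_{d+1}$ only shows that the order of $\deg\rho$ divides $N$. The correct reason is $(n+1)\deg\rho\equiv\deg S=1\pmod N$.
\item The omitted rows are excluded by the Calabi--Yau comparison in Step~1, not by weak admissibility.
\item You should record the parity facts behind these exclusions. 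When $n$ is odd and $d$ even, $N/2$ is odd, so $\tau^r\phi$ is not Calabi--Yau. In type $\mathbb D_n$ with $n$ and $d$ even, $N=(n-1)d+1$ is odd, so $N/r$ cannot be even and $\tau^r\phi$ is excluded.
\end{itemize}
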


\begin{proof}
The Serre functor on \(\Db[k\Delta]\) is \(\tau[1]\).  Hence the
condition that the quotient \(\T\) is \((d+1)\)-Calabi--Yau means that
\(\tau[1]\simeq[d+1]\) on \(\T\), or equivalently that \(\tau_{d+1}=\tau[-d]\) acts
trivially on the quotient, that is, \(\tau_{d+1}\in G\).

In type \(\mathbb A_n\) with \(n\) even we have \(\tau=\rho^{-2}\) and
\(S=\rho^{n+1}\), hence \(\tau_{d+1}=\tau S^{-d}=\rho^{-N}\).  Therefore
\(\tau_{d+1}\in\langle\rho^r\rangle\) if and only if \(r\mid N\), and then
\(L_G=N/r\).  If \(n\) is odd, then \(N/2\) is an integer and
\(S=\tau^{-(n+1)/2}\phi\).  Hence
\(\tau_{d+1}=\tau S^{-d}=\tau^{N/2}\phi^d\). 
For \(d\) even, \(N/2\) is odd, so the only possible Calabi--Yau generators are
\(\eta=\tau^r\) with \(r\mid N/2\), and then \(L_G=(N/2)/r\).  For \(d\) odd,
a pure power of \(\tau\) cannot contain \(\tau_{d+1}\), while
\(\eta=\tau^r\phi\) contains \(\tau_{d+1}\) only when \(r\mid N/2\) and
\((N/2)/r\) is odd.  In this last case, \(\deg(\phi)=N/2\) in
\(\mathbb Z/N\mathbb Z\), so \(\deg(\eta)=rd+N/2\).  If
\(k=N/(2r)\), then \(k\) is odd and \((d,k)=1\), hence
\[
L_G=\frac{N}{(\deg\eta, N)}=\frac{2rk}{r(d+k,2k)}=k=\frac{N}{2r}.
\]
Thus again \(L_G=N/(2r)\).  Substituting these values of \(L_G\) in the type
\(\mathbb A\) row of Theorem
\ref{theo-existence-dct} gives the stated existence conditions.

For type \(\mathbb D\) and \(\mathbb E\), Lemma \ref{lem-N} gives
\(\tau_{d+1}=\tau^N\phi^d\), with \(\phi=\id\) except in the two involutive cases
specified above.  If \(\tau_{d+1}\in\langle\tau^r\psi\rangle\), the \(\tau\)-part
forces \(r\mid N\), and we write \(L=N/r\).  The diagram-automorphism part
is exactly \(\psi^L=\phi^d\).
When \(\phi=\id\), this says \(\psi^L=\id\), which gives the evenness
condition for order-two diagram automorphisms and the divisibility by \(3\) in
the order-three \(D_4\) case.  When \(\phi\neq\id\), the number \(N\), and
hence \(L\), is odd.  Thus for even \(d\) the Calabi--Yau condition leaves
only \(\psi=\id\), while for odd \(d\) it leaves only \(\psi=\phi\).  Since
\((d,N)=1\), in all these cases \(L_G=N/r=L\).  Substituting this value of
\(L_G\) in Theorem \ref{theo-existence-dct} gives the table above.
\end{proof}

\noindent 
{\it Remark.} Burban--Iyama--Keller--Reiten classify in \cite[Theorem A.2]{Burban2008aa}
the connected finite \(2\)-Calabi--Yau triangulated categories which have
ordinary cluster tilting objects.  In their notation the AR-quiver is written
as \(\mathbb Z\Delta/G\), where \(G\) is a weakly admissible cyclic group. When a listed case has
\(G=\langle\tau^g\rangle\), their \(g\) is the exponent of the AR-translation
\(\tau\).  Thus their theorem is the \(d=1\) comparison case for the present
statement.  Translating their generator into the order \(L_G\) of the induced
action on the ordinary cluster category gives the \(d=1\) specialization of
Table \ref{tab:existence-conditions}.

Let us point out that the case for \(\mathbb E_8\) and $G=\langle\tau^4\rangle$ is missing in  \cite[Theorem A.2]{Burban2008aa}. The case
\(\mathbb Z\mathbb E_8/\langle\tau^4\rangle\), that is \(g=4\), is
\(2\)-Calabi--Yau: indeed \([1]=\tau^{-15}\), hence \([1]\simeq\tau\) modulo
\(\tau^4\), and the Serre functor is
\(\tau[1]\simeq\tau^2\simeq[2]\).  Moreover it has a cluster tilting
object, namely the union
\(\{(0,8),(4,8),(8,8),(12,8)\}\cup
\{(0,7),(4,7),(8,7),(12,7)\}\).
The Hom-hammock residues used above show that these eight vertices are
pairwise \(1\)-orthogonal.

\section{Derived equivalences between endomorphism algebras}

The purpose of this section is to turn the existence and mutation results
above into derived equivalences between endomorphism algebras.  We first set
up equivariant mutation and show that a \(G\)-mutation path lifts through the
covering to $\mathcal{D}$-split triangles, and then to \(\mathcal D\)-split sequences in a
Frobenius model.  This gives derived equivalences between the corresponding
endomorphism algebras.  After that, we prove \(G\)-mutation connectedness in
Dynkin type by combining the eventual common-orbit property with
Iyama--Yoshino reduction.

Let \(\mathcal C\) be a triangulated category, and let
\(G\) be a group acting on \(\mathcal C\) by triangle autoequivalences.  
Let \(T\) be a  basic $G$-stable  object in $\mathcal{C}$, and let \(T=X\oplus W\) be a decomposition such that both \(X\) and \(W\) are
\(G\)-stable.  We say that \(T^*=X^*\oplus W\) is obtained from \(T\) by a
\emph{\(G\)-left mutation at \(X\)} if there is a triangle
\(X\longrightarrow B\longrightarrow X^*\longrightarrow X[1],\)
where \(X\to B\) is a minimal left \(\add(W)\)-approximation.    In this case we call \((T,T^*)\) a
{\em \(G\)-mutation pair}.

Two \(G\)-stable basic \(d\)-cluster tilting objects \(M\) and \(M^*\) are
called \emph{\(G\)-mutation reachable} if there is a sequence of
\(G\)-stable basic \(d\)-cluster tilting objects
\(M=M_0,M_1,\ldots,M_r=M^*\)
such that, for each \(0\le i<r\), either \((M_i,M_{i+1})\) or
\((M_{i+1},M_i)\) is a \(G\)-mutation pair.

\begin{Prop}\label{prop-G-mutation-derived-equivalence}
Assume the covering setting of Section \(3\), and suppose that
\(\tau_{d+1}^m\in G\) for some \(m>0\), as in Lemma
\ref{lem-dct-H-ctcat}.
Suppose moreover that
\(\mathcal T\) is the stable category \(\underline{\mathcal E}\) of a
Frobenius category \(\mathcal E\) with projective generator \(\Lambda\). Let
\(M,M'\) be \(G\)-stable \(d\)-cluster tilting objects in
\(\mathcal C_d(H)\) which are \(G\)-mutation reachable.  Put
\(M_{\mathcal T}=F(\pi^{-1}M)\) and
\(M'_{\mathcal T}=F(\pi^{-1}M')\).  Then the
algebras
$\End_{\mathcal E}(\Lambda\oplus M_{\mathcal T})$ and  
$\End_{\mathcal E}(\Lambda\oplus M'_{\mathcal T})$ 
are derived equivalent.
\end{Prop}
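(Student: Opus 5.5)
By induction along the mutation path, it suffices to treat a single \(G\)-mutation pair \((M,M^*)\) with \(M=X\oplus W\), \(M^*=X^*\oplus W\), \(X\), \(W\) both \(G\)-stable. Derived equivalence is transitive, and the relation is symmetric in \(M\) and \(M'\). The plan is to produce an \(\add(\Lambda\oplus W_{\mathcal T})\)-split sequence in \(\mathcal E\) (a \(\mathcal D\)-split sequence in the sense of Hu--Xi) of the form
\[
0\lra X_{\mathcal T}\lra \Lambda'\oplus B_{\mathcal T}\lra X^*_{\mathcal T}\lra 0 .
\]
Here \(\Lambda'\in\add(\Lambda)\) and \(B_{\mathcal T}\in\add(W_{\mathcal T})\). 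The Hu--Xi theorem then gives the derived equivalence between \(\End_{\mathcal E}(\Lambda\oplus W_{\mathcal T}\oplus X_{\mathcal T})\) and \(\End_{\mathcal E}(\Lambda\oplus W_{\mathcal T}\oplus X^*_{\mathcal T})\), which are exactly the two algebras in the statement.

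\textbf{Step 1: lift the exchange triangle to \(\Db[H]\).} Start from the exchange triangle \(X\to B\to X^*\to X[1]\) in \(\mathcal C_d(H)\), where \(X\to B\) is a minimal left \(\add(W)\)-approximation. Pull it back along \(\pi\). Since \(\pi\) is a triangle functor and a covering on indecomposables, Lemma \ref{lem-covering-approx} shows the following: for each indecomposable summand \(x\) of \(\pi^{-1}X\), the map from \(x\) to the corresponding sum of lifts is a left \(\add(\pi^{-1}W)\)-approximation in \(\Db[H]\). The approximation is right-sided as well, since \(\pi^{-1}W\) is \(\tau_{d+1}\)-stable and the Hom spaces in the orbit category are direct sums of lifted Hom spaces. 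Push these triangles down by \(F\), using Lemma \ref{lem-covering-triangle}, to get triangles \(X_{\mathcal T}\to B_{\mathcal T}\to X^*_{\mathcal T}\to X_{\mathcal T}[1]\) in \(\mathcal T\).

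Two facts must be checked here. First, \(X_{\mathcal T}\) and \(W_{\mathcal T}\) are genuine finite basic objects; this uses \(G\)-stability and \(\tau_{d+1}^m\in G\) exactly as in the last part of Lemma \ref{lem-dct-H-ctcat}. Second, the map \(X_{\mathcal T}\to B_{\mathcal T}\) is a left \(\add(W_{\mathcal T})\)-approximation and \(B_{\mathcal T}\to X^*_{\mathcal T}\) is a right \(\add(W_{\mathcal T})\)-approximation. Both follow from Lemma \ref{lem-covering-approx} applied to \(F\), since every indecomposable \(U\) of \(\add W_{\mathcal T}\) has fibre \(F^{-1}(U)\subseteq\pi^{-1}W\). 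I would also argue directly that in a \(d\)-cluster tilting setting the second map is a right approximation. Applying \(\Hom(W,-)\) to the triangle, exactness holds because \(\Hom(W,X[1])=0\) by rigidity of \(M\); this gives a \(\mathcal D\)-split triangle in \(\mathcal T\) with \(\mathcal D=\add(W_{\mathcal T})\).

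\textbf{Step 2: pass to the Frobenius model.} Lift the triangle to a short exact sequence \(0\to X_{\mathcal T}\to B_{\mathcal T}\oplus P\to X^*_{\mathcal T}\to0\) in \(\mathcal E\), with \(P\) projective-injective, which is standard for stable categories of Frobenius categories. Let \(\mathcal D=\add(\Lambda\oplus W_{\mathcal T})\). A morphism in \(\mathcal E\) from \(X_{\mathcal T}\) to \(\Lambda\) factors through the inflation because \(\Lambda\) is injective. A morphism to \(W_{\mathcal T}\) factors modulo projectives by Step 1, and the projective part factors as well because \(X_{\mathcal T}\to P\) can be chosen to be an injective envelope. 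Dually, the deflation is a right \(\mathcal D\)-approximation. Hence the sequence is \(\mathcal D\)-split, and the derived equivalence follows.

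\textbf{Main obstacle.} I expect Step 1 to be the hardest, specifically the claim that the lifted maps are still (left and right) \(\add\)-approximations. \(F\) is not known to be a triangle functor, and one must also ensure that the triangle obtained from Lemma \ref{lem-covering-triangle} has the same morphism \(Ff\) for which Lemma \ref{lem-covering-approx} gives the approximation property. I would handle this by working with radical approximations on indecomposable summands, and by invoking Lemma \ref{lem-covering-approx} separately for \(\pi\) and for \(F\).
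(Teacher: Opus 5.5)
Your proposal is correct and follows essentially the same route as the paper. You lift the minimal left $\add(W)$-approximation componentwise to $\Db[H]$ on one representative per $G$-orbit, use Lemma \ref{lem-covering-approx} for $\pi$ and then for $F$ together with Lemma \ref{lem-covering-triangle} (which already gives a triangle whose first map is $Ff$, so your stated obstacle does not arise), obtain the right approximation from $\Hom_{\mathcal T}(W_{\mathcal T},X_{\mathcal T}[1])=0$, realize the triangle as a conflation in $\mathcal E$, and apply the Hu--Xi $\mathcal D$-split theorem. The one point you gloss over, which the paper addresses explicitly, is that the lifted approximation need not be left minimal: the third term is then $X^*_{\mathcal T}$ only up to redundant summands in $\add(W_{\mathcal T})$ (and projective summands in $\mathcal E$), and this changes the endomorphism algebra only up to Morita equivalence.
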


\begin{proof}
By the reachability assumption in \(\mathcal C_d(H)\), it is enough to treat
one \(G\)-mutation step.  Write such a step as
\(M=X\oplus W,\ M^*=Y\oplus W,\)
with \(X,W,Y\) \(G\)-stable.  We use the exchange triangle in the form
\[
X\longrightarrow B\longrightarrow Y\longrightarrow X[1],
\tag{\(\diamond\)}
\]
where the first morphism is a minimal left \(\add(W)\)-approximation.  If the
mutation pair occurs in the reverse order along the chosen path, we reverse
that step; the derived equivalence obtained below is symmetric.

The triangle \((\diamond)\) lies in the \(d\)-cluster category.  We first make
precise the choice of lifts.  For a direct-sum object \(U\) in
\(\mathcal C_d(H)\), write \(\pi^{-1}U\) for the full set of indecomposable
objects in \(\Db[H]\) whose image is an indecomposable summand of \(U\).  If
\(U\) is \(G\)-stable, then \(\pi^{-1}U\) is \(G\)-stable: for
\(\widetilde U\in\pi^{-1}U\) and \(g\in G\), one has
\(\pi(g\widetilde U)=g\pi(\widetilde U)\), which is again a summand of \(U\).
This applies to \(X\) and \(W\).  It also applies to \(B\), because applying
any \(g\in G\) to \((\diamond)\) gives another exchange triangle with a
minimal left \(\add(W)\)-approximation of \(X\), and the middle term is
therefore isomorphic to \(B\).

Moreover, for any finite direct-sum object \(U\) in \(\mathcal C_d(H)\),
the set \(\pi^{-1}U\) is a finite union of
\(\langle\tau_{d+1}\rangle\)-orbits, one for each indecomposable summand of
\(U\).  Put \(K=\langle\tau_{d+1}\rangle\).  If \(U\) is $G$-stable, then
\(\pi^{-1}U\) is $G$-stable.  Since
\(\langle\tau_{d+1}^m\rangle\subseteq G\), each $K$-orbit meets at most
$m$ $G$-orbits.  Hence the fibres \(\pi^{-1}X\), \(\pi^{-1}W\) and
\(\pi^{-1}B\) are finite unions of $G$-orbits.  By the covering assumption
\(F^{-1}(Fz)=Gz\), the
indecomposable summands of \(F(\pi^{-1}U)\) are in bijection with the
\(G\)-orbits contained in \(\pi^{-1}U\).
Choose one representative from each \(G\)-orbit in \(\pi^{-1}X\) and in
\(\pi^{-1}W\), retaining the multiplicities occurring in \(X\) and \(W\).
Denote the resulting finite direct sums by \(X^\sharp\) and \(W^\sharp\),
respectively.  Thus
\(F X^\sharp=F(\pi^{-1}X)\) and
\(F W^\sharp=F(\pi^{-1}W)\).

We now lift the left approximation more explicitly.  For indecomposable
objects \(x,y\in\Db[H]\), one has
\[
\Hom_{\mathcal C_d(H)}(\pi x,\pi y)
 \simeq \bigoplus_{i\in\mathbb Z}
 \Hom_{\Db[H]}(x,\tau_{d+1}^{i}y).
\]
Decompose the minimal left approximation \(X\to B\) into the left
approximations of the indecomposable summands of \(X\).  For every
indecomposable summand \(x\) of \(X^\sharp\), write each matrix entry of the
corresponding approximation as the sum of all its components in the displayed
Hom decomposition.  Include every non-zero component
\(x\to\tau_{d+1}^{i}y\) in a single morphism whose target is the direct sum
of all the translates \(\tau_{d+1}^{i}y\) which occur.  Only finitely many
components occur because \(\mathcal C_d(H)\) is Hom-finite.  Taking the
direct sum over the chosen \(G\)-orbit representatives gives a finite
morphism
\[
f^\sharp:X^\sharp\longrightarrow B^\sharp
\]
in \(\Db[H]\).  After applying \(\pi\), the original minimal left
approximation factors through \(\pi f^\sharp\), by folding together the
copies of each indecomposable summand of \(B\).  Hence \(\pi f^\sharp\) is a
left \(\add(W)\)-approximation, although it need not be left minimal.  Lemma
\ref{lem-covering-approx} applied to \(\pi\) now shows that \(f^\sharp\) is a
left approximation with respect to the full subcategory generated by
\(\pi^{-1}W\).

Complete \(f^\sharp\) to a triangle
\[
X^\sharp\xrightarrow{f^\sharp} B^\sharp\longrightarrow
Y^\sharp\longrightarrow X^\sharp[1]
\]
in \(\Db[H]\).  By Lemma \ref{lem-covering-triangle}, its image under \(\pi\)
is a triangle.  Any left approximation is the direct sum of a minimal left
approximation and a redundant summand.  It follows that \(\pi Y^\sharp\) is
the corresponding direct sum of copies of \(Y\), together with an object in
\(\add(W)\).  In particular,
\(\add(W\oplus\pi Y^\sharp)=\add(W\oplus Y)\).

Now apply the covering functor \(F\).  By Lemma \ref{lem-covering-triangle},
we obtain a triangle in \(\mathcal T\):
\[
F X^\sharp\longrightarrow F B^\sharp\longrightarrow
F Y^\sharp\longrightarrow F X^\sharp[1].
\tag{\(\heartsuit\)}
\]
Put
\(X_{\mathcal T}=F X^\sharp,\ B_{\mathcal T}=F B^\sharp,\)
\(Y_{\mathcal T}=F Y^\sharp\), and \(W_{\mathcal T}=FW^\sharp\).
For every indecomposable summand \(U\) of \(W_{\mathcal T}\), every object
in \(F^{-1}(U)\) belongs to \(\pi^{-1}W\).  Therefore Lemma
\ref{lem-covering-approx}, now applied to \(F\), shows directly that the
first morphism in \((\heartsuit)\) is a left
\(\add(W_{\mathcal T})\)-approximation.  Since
\(F(W^\sharp\oplus X^\sharp)=F(\pi^{-1}M)\) is \(d\)-cluster tilting in
\(\mathcal T\), the rigidity condition
\(\Hom_{\mathcal T}(W_{\mathcal T},Y_{\mathcal T}[1])=0\)
implies, by applying \(\Hom_{\mathcal T}(W_{\mathcal T},-)\) to
\((\heartsuit)\), that the second morphism
\(B_{\mathcal T}\to Y_{\mathcal T}\) is a right
\(\add(W_{\mathcal T})\)-approximation.  Thus \((\heartsuit)\) is an
\(\add(W_{\mathcal T})\)-split triangle.

We regard the objects \(X_{\mathcal T},Y_{\mathcal T},W_{\mathcal T}\) and
\(B_{\mathcal T}\) as objects of the Frobenius model \(\mathcal E\).  Since
\(\mathcal T=\underline{\mathcal E}\), the triangle \((\heartsuit)\) is
induced by a conflation in \(\mathcal E\) after adding a projective direct
summand:
\[
0\longrightarrow X_{\mathcal T}
\longrightarrow B_{\mathcal T}\oplus P
\longrightarrow Y_{\mathcal T}
\longrightarrow 0,
\tag{\(\clubsuit\)}
\]
where \(P\in\add(\Lambda)\).  Adding the projective generator \(\Lambda\) to the
approximation subcategory turns \((\clubsuit)\) into an
\(\add(\Lambda\oplus W_{\mathcal T})\)-split sequence.  The
\(\mathcal D\)-split sequence result in \cite[Theorem 1.1]{Hu2011aa}
therefore gives a derived equivalence between
\(\End_{\mathcal E}(\Lambda\oplus W_{\mathcal T}\oplus X_{\mathcal T})\)
and
\(\End_{\mathcal E}(\Lambda\oplus W_{\mathcal T}\oplus Y_{\mathcal T})\).
Since the possible redundant summands of \(Y_{\mathcal T}\) belong to
\(\add(W_{\mathcal T})\), deleting repetitions gives a Morita equivalent
endomorphism algebra.  Thus this is the desired derived equivalence for one
mutation step after passing from \(\mathcal C_d(H)\) to \(\mathcal T\).
Composing these
equivalences along the \(G\)-mutation path gives the statement for
\(M_{\mathcal T}\) and \(M'_{\mathcal T}\).
\end{proof}

\begin{Coro}\label{coro-taud-power-derived-equivalence}
Assume the setting of Section \(3\), with \(H\) a finite dimensional
hereditary algebra, and suppose that
\(G=\langle \tau_{d+1}^m\rangle\)
for some \(m\geq1\).  Assume further that
\(\mathcal T=\underline{\mathcal E}\) for a Frobenius category \(\mathcal E\)
with projective generator \(\Lambda\).  Then \(\mathcal T\) admits
\(d\)-cluster tilting objects.  Moreover, if \(M,M'\in\T\) are \(d\)-cluster tilting objects, then the algebras $\End_{\mathcal E}(\Lambda\oplus M)$ and $\End_{\mathcal E}(\Lambda\oplus M')$ 
are derived equivalent.
\end{Coro}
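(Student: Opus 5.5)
Existence follows directly from Corollary \ref{koro-taud-power}. For the derived equivalence we reduce to Proposition \ref{prop-G-mutation-derived-equivalence}, so the work is to show that any two \(G\)-stable \(d\)-cluster tilting objects of \(\mathcal C_d(H)\) are \(G\)-mutation reachable. Since \(G=\langle\tau_{d+1}^m\rangle\), the hypothesis \(\tau_{d+1}^m\in G\) of that proposition holds trivially, and Lemma \ref{lem-dct-H-ctcat} applies.

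Let \(M,M'\) be \(d\)-cluster tilting objects in \(\mathcal T\); we may assume they are basic, since adding multiplicities gives Morita equivalent endomorphism algebras. By Lemma \ref{lem-dct-H-ctcat}, \(M\simeq F(\pi^{-1}\overline M)\) and \(M'\simeq F(\pi^{-1}\overline{M'})\) for basic \(G\)-stable \(d\)-cluster tilting objects \(\overline M,\overline{M'}\) of \(\mathcal C_d(H)\). On \(\mathcal C_d(H)\) the element \(\tau_{d+1}\) acts as the identity up to isomorphism, so \(G\) acts trivially on isomorphism classes of objects. Hence every object of \(\mathcal C_d(H)\) is \(G\)-stable, and every decomposition \(T=X\oplus W\) has \(X\) and \(W\) \(G\)-stable. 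Consequently \(G\)-mutation is ordinary mutation at an arbitrary summand, and \(G\)-mutation reachability is ordinary mutation reachability.

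It then remains to show that the \(d\)-cluster tilting objects of \(\mathcal C_d(H)\) are connected under iterated mutation. I would invoke the known result that the \(d\)-cluster tilting graph of \(\mathcal C_d(H)\), for \(H\) hereditary, is connected. Alternatively, \(\mathcal C_d(H)\) has the canonical object \(H\) and every \(d\)-cluster tilting object is reachable from it, via the correspondence with silting objects of \(\Db[H]\) in a fundamental domain and the connectedness of silting mutation for hereditary algebras (Aihara--Iyama together with Buan--Reiten--Thomas). In the mutation steps I use the exchange triangle with a minimal left \(\add(W)\)-approximation, with the complement \(X\) indecomposable. The resulting \(X^*\) is again \(d\)-cluster tilting, although the mutation has \(d+1\) complements rather than being an involution; reachability only needs the chain, so this causes no trouble.

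The main obstacle is this connectedness input. It is not proved earlier in the paper, so it has to be cited, and I would check that the cited result is stated for general finite dimensional hereditary \(H\), since \(k\) algebraically closed gives \(H\simeq kQ\) up to Morita equivalence. Given connectedness, Proposition \ref{prop-G-mutation-derived-equivalence} applied to \(\overline M,\overline{M'}\) shows that \(\End_{\mathcal E}(\Lambda\oplus M)\) and \(\End_{\mathcal E}(\Lambda\oplus M')\) are derived equivalent.
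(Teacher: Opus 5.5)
Your proposal is correct and follows the paper's proof: existence from the trivial $G$-action on $\mathcal C_d(H)$ (the content of Corollary~\ref{koro-taud-power}), then lifting $M,M'$ through Lemma~\ref{lem-dct-H-ctcat}, so that $G$-mutation reachability becomes ordinary mutation reachability, and finally combining the mutation connectedness of $d$-cluster tilting objects in $\mathcal C_d(H)$ (the paper cites \cite{Zhou2009}) with Proposition~\ref{prop-G-mutation-derived-equivalence}. Your remark that one exchange between two objects sharing an almost complete summand may take several left mutation steps through the $d+1$ complements is correct, and the paper leaves this point implicit.
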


\begin{proof}
The \(d\)-cluster category \(\mathcal C_d(H)\) has \(d\)-cluster tilting
objects, for example the canonical image of \(H\).  Since \(\tau_{d+1}\) acts
trivially on \(\mathcal C_d(H)\), the induced action of
\(G=\langle \tau_{d+1}^m\rangle\) on \(\mathcal C_d(H)\) is trivial.  Hence
every basic \(d\)-cluster tilting object in \(\mathcal C_d(H)\) is
\(G\)-stable.  Lemma \ref{lem-dct-H-ctcat} therefore gives
\(d\)-cluster tilting objects in \(\mathcal T\).

Let \(M\) and \(M'\) be as in the statement.  By Lemma
\ref{lem-dct-H-ctcat}, they lift to \(G\)-stable
\(d\)-cluster tilting objects in \(\mathcal C_d(H)\).  Since the induced
\(G\)-action is trivial, \(G\)-mutation reachability is ordinary mutation
reachability.  The \(d\)-cluster tilting objects in \(\mathcal C_d(H)\) are
mutation connected \cite{Zhou2009}, so Proposition
\ref{prop-G-mutation-derived-equivalence} gives the desired derived
equivalence.
\end{proof}

\begin{Def}\label{def-eventual-common-orbit}
Let \(\mathcal C\) be a \((d+1)\)-Calabi--Yau triangulated category, and let
\(G\) act on \(\mathcal C\) by triangle autoequivalences.  We say that the
pair \((\mathcal C,G)\) has the
\emph{eventual common-orbit property} if, for any two \(G\)-stable basic
\(d\)-cluster tilting objects \(M\) and \(N\) in \(\mathcal C\), there are
\(G\)-stable basic \(d\)-cluster tilting objects \(M'\) and \(N'\) such that
\(M\) and \(M'\) are \(G\)-mutation reachable, \(N\) and \(N'\) are
\(G\)-mutation reachable, and \(M'\) and \(N'\) have a common non-zero direct
summand. 
\end{Def}

\begin{Lem}\label{lem-shift-whole-mutation}
Let \(T\) be a basic \(G\)-stable \(d\)-cluster tilting object in
\(\mathcal C_d(k\Delta)\).  Then \(T[1]\) and \(T[-1]\) are
\(G\)-mutation reachable from \(T\).
\end{Lem}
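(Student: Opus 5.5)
The plan is to realise the shift $T\mapsto T[1]$ as a composite of $G$-left mutations. Each step should be a mutation in which the minimal left approximation is the zero map, so that the exchange triangle is the rotated trivial triangle
\[
X\longrightarrow 0\longrightarrow X[1]\xrightarrow{\ \id\ } X[1].
\]
The statement for $T[-1]$ then follows by symmetry: applying the result for $[1]$ to $T[-1]$ shows that $T$ is reachable from $T[-1]$, and reachability is symmetric by definition. Throughout, I use that $G$ commutes with $[1]$ on $\mathcal C_d(k\Delta)$. Consequently, if $X$ is $G$-stable then so is $X[1]$, and $T[1]$ is again a $G$-stable basic $d$-cluster tilting object.

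\emph{Step 1 (the case the definition already covers).} In the definition of a $G$-left mutation, the decomposition $T=X\oplus W$ only requires $X$ and $W$ to be $G$-stable. Taking $X=T$ and $W=0$, the minimal left $\add(0)$-approximation of $T$ is $T\to 0$. The exchange triangle is then $T\to 0\to T[1]\to T[1]$, so $(T,T[1])$ is a $G$-mutation pair. If $W=0$ is regarded as legitimate, this settles the lemma at once. However, it is probably not what the authors intend for the later application, where mutations are supposed to preserve a nonzero common part (compare Definition \ref{def-eventual-common-orbit}). So I would also give an argument in which every step keeps $W\neq 0$.

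\emph{Step 2 (nondegenerate version).} The idea is to move one $G$-orbit of summands at a time. Write $T=X_1\oplus\cdots\oplus X_s$, where each $X_i$ is the sum of one $G$-orbit of indecomposable summands. The claim to establish is that the orbits can be ordered so that, at stage $i$, the object
\[
T_i=X_1[1]\oplus\cdots\oplus X_{i}[1]\oplus X_{i+1}\oplus\cdots\oplus X_s
\]
satisfies $\Hom(X_{i+1},W_i)=0$, where $W_i$ is the complement of $X_{i+1}$ in $T_i$. Given this, the minimal left $\add(W_i)$-approximation of $X_{i+1}$ is zero, so the $G$-left mutation of $T_i$ at $X_{i+1}$ is exactly $T_{i+1}$, and $T_s=T[1]$.

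The vanishing splits into two parts. First, $\Hom(X_{i+1},X_j)$ must vanish for $j>i+1$. Second, $\Hom(X_{i+1},X_j[1])$ vanishes automatically for $j\le i$, by $d$-rigidity of $T$ (here $d\ge1$). So it suffices to order the orbits so that there are no nonzero maps from an earlier orbit to a later one. Equivalently, I need an ordering compatible with the opposite of the ``Hom-preorder'' on the $G$-orbits of summands of $T$.

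\emph{Step 3 (main obstacle).} The difficulty is showing that this preorder on $G$-orbits has no oriented cycles, i.e.\ that it can be linearly ordered. My plan is to lift to $\Db[k\Delta]$. The $d$-cluster tilting object $T$ lifts under $\pi$ to a $\tau_{d+1}$-periodic set, and one can choose a fundamental domain consisting of the summands lying between a slice and its image under $[d]$, as in the standard description of $d$-cluster tilting objects in Dynkin $d$-cluster categories. Within such a domain, the Hom-relation is governed by the partial order of $\mathbb Z\Delta$, which is acyclic, and nonzero Hom in $\mathcal C_d$ is controlled by the $\tau_{d+1}$-components via the Hom decomposition used in the proof of Proposition \ref{prop-G-mutation-derived-equivalence}.

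The delicate point is the compatibility of this order with $G$-orbits. An element of $G$ may move a summand to a later position in the fundamental domain, and this could create a cycle among orbits, especially for $d=1$, where endomorphism quivers of cluster tilting objects can contain oriented cycles. If the linear-ordering argument fails, I would fall back on Step 1. That is, I would accept $W=0$ as a valid $G$-mutation, and record that the decomposition in the definition is allowed to have an empty $W$.
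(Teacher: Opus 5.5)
Your Step 1 is exactly the paper's proof. The paper mutates the whole object $T$ with complement $W=0$, phrased as a right mutation with exchange triangle $T[-1]\to 0\to T\to T$, and then gets $T[1]$ from the symmetry of reachability. So the degenerate case $W=0$ is precisely what the authors intend, and it is what the whole-object shifts in the eventual common-orbit argument rely on; your left-mutation version matches the paper's formal definition of $G$-left mutation even more directly, so it settles the lemma and Steps 2--3 are unnecessary.

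Those steps would also fail in general, as you suspected. For $d=1$, when the quiver of $\End(T)$ is an oriented cycle, every summand has a nonzero map to another summand, so no ordering with the required vanishing exists.
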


\begin{proof}
Since every triangle autoequivalence in \(G\) commutes with the suspension,
\(T[1]\) and \(T[-1]\) are again \(G\)-stable.  Regard the whole object
\(T\) as the \(G\)-stable direct summand to be mutated, so that the
complement is zero.  The minimal right \(\add(0)\)-approximation of \(T\) is
the zero morphism \(0\to T\), and its exchange triangle is
\(T[-1]\longrightarrow 0\longrightarrow T\longrightarrow T.\)
Thus \(T[-1]\) is the \(G\)-right mutation of \(T\) at the whole summand.
Since \(G\)-mutation reachability is defined symmetrically, \(T[1]\) is also
reachable from \(T\).
\end{proof}

\begin{Prop}\label{prop-eventual-common-orbit-ADE}
In every Dynkin case allowed by Theorem \ref{theo-existence-dct}, the pair
\((\mathcal C_d(k\Delta),G)\) has the eventual common-orbit property.
\end{Prop}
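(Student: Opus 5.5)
Given two $G$-stable basic $d$-cluster tilting objects $M$ and $N$ in $\mathcal C_d(k\Delta)$, I would produce $G$-mutation reachable objects $M'$ and $N'$ that share an indecomposable summand, running through the cases of Table \ref{tab:existence-conditions}. The only tool needed for the reachability half is Lemma \ref{lem-shift-whole-mutation}: every $M[t]$, $t\in\mathbb Z$, is $G$-mutation reachable from $M$. So it suffices to find a shift $t$ such that $M[t]$ and $N$ have a common indecomposable summand, and then take $M'=M[t]$ and $N'=N$. Since $[1]$ generates a cyclic action on $\ind\mathcal C_d(k\Delta)$ whose orbits are large (in types $\mathbb A$ and $\mathbb D$ it is rotation of $P_N$ or $P_N^\circ$ by one step, up to tags), this reduces the problem to a combinatorial statement about $[1]$-orbits.

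\emph{Types $\mathbb A$ and $\mathbb D$.} In type $\mathbb A_n$, $[1]$ rotates $P_N$ by $2\pi/N$, so $\langle[1]\rangle$ acts transitively on $d$-diagonals with a fixed ``length'' $y-x$ (up to reversal). It is therefore enough that $M$ and $N$ each contain a diagonal of the same length. Every $(d+2)$-angulation contains an ear, i.e.\ a $(d+2)$-gon with $d+1$ boundary edges, and its remaining side has length $d+1$. Hence both $M$ and $N$ contain a $d$-diagonal of length $d+1$, and a suitable rotation $M[t]$ shares it with $N$. The degenerate case $n=1$ is trivial.

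In type $\mathbb D_n$, every $(d+2)$-angulation of $P_N^\circ$ contains a tagged radius (as noted in the necessity proof). The shift rotates by one step and changes the tag unless $n$ and $d$ are both even. Starting from a radius $D^{\epsilon}_{ii}$ in $M$ and $D^{\epsilon'}_{jj}$ in $N$, I would take $t$ with $i-t\equiv j$; if the tags disagree, I would instead use $t+N$, since $[N]$ either flips the tag or is trivial. In the trivial case, i.e.\ $n$ and $d$ both even, I would fall back on ears of length $d+1$ exactly as in type $\mathbb A$. An ear exists whenever $n$ is large enough that the angulation is not formed by radii alone; the small residual configurations would be checked directly.

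\emph{Type $\mathbb E$.} Here $[1]=\tau^{-h^*}\phi$ and $(h^*,N)=1$, so $\langle[1]\rangle$ contains all of $\langle\tau\rangle$ on $\mathcal C_d(k\mathbb E_n)$, possibly up to $\phi$ in type $\mathbb E_6$. The $\langle[1]\rangle$-orbits are therefore exactly the levels $j$, or the $\phi$-paired levels for $\mathbb E_6$. It then suffices that $M$ and $N$ share a level. For $L>1$ this follows from the orbit-graph analysis in the proof of Theorem \ref{theo-counting-dct}:
\begin{itemize}
\item for $\mathbb E_7$ with $L=7$, every $G$-stable object lies entirely on level $7$;
\item for $\mathbb E_8$ with $L=4$, every object uses levels $7$ and $8$;
\item for $\mathbb E_8$ with $L=2$, every level multiset in Table \ref{tab:E8-L2-clique-counts} contains level $8$.
\end{itemize}

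\emph{Main obstacle.} The hard case is $\mathbb E_n$ with $L=1$, where no clique table is available. I would first try to show that every $d$-cluster tilting object contains a summand on a fixed extremal level. If that fails, I would use the pigeonhole argument: $M$ has $n$ summands among $n$ levels, so it suffices to find one level that every $d$-cluster tilting object must meet. I would attempt this via Iyama--Yoshino reduction to a smaller Dynkin type together with an induction. As a fallback, I would weaken the goal: replace $N$ by a single ordinary mutation of it (allowed when $L=1$ and $G$ acts trivially up to $\psi$) that moves a summand onto a level occupied by $M$.
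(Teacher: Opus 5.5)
Your reduction is the same as the paper's. By Lemma~\ref{lem-shift-whole-mutation} it suffices to find a suspension orbit that every $G$-stable object meets. Your type $\mathbb A$ ear argument and your treatment of the $L_G>1$ cases in type $\mathbb E$ also agree with the paper. One small slip: for $\mathbb E_8$ with $L_G=4$, Table~\ref{tab:E8-L4-Dij} also allows objects made of two level-$8$ orbits, so the shared level is $8$, not ``$7$ and $8$''.

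\textbf{The gap: type $\mathbb E$ with $L_G=1$.} You leave this case open, and the route you propose is risky. The paper disposes of it by asserting that every object meets each horizontal $\tau$-orbit, and that assertion is false. Take $\mathcal C_1(k\mathbb E_6)$, whose AR-quiver is $\mathbb Z\mathbb E_6/\langle\tau^7\phi\rangle$, with $G=\langle\tau^{14}\rangle$ acting trivially. Knitting the Hom-hammocks $H^+((0,1))$ and $H^+((0,2))$ shows that $(0,1)\oplus(3,1)\oplus(6,1)\oplus(2,6)\oplus(5,6)\oplus(1,2)$ is rigid with six summands, hence cluster tilting. Yet it has no summand on level $3$ or level $4$.

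What does close most of this case is the following. $L_G=1$ means $\eta=\tau^r\psi$ with $N\mid r$. By Lemma~\ref{lem-N}, $\eta$ then induces the identity on $\mathcal C_d(k\mathbb E_7)$ and $\mathcal C_d(k\mathbb E_8)$, and either the identity or $\phi$ on $\mathcal C_d(k\mathbb E_6)$. When the induced action is trivial, $G$-mutation is ordinary mutation. The mutation-connectedness of \cite{Zhou2009}, already used in Corollary~\ref{coro-taud-power-derived-equivalence}, then gives the property at once: one object can be mutated all the way to the other. The $\mathbb E_6$ case with induced action $\phi$ still needs a genuinely equivariant argument, for instance via the folded type $\mathbb F_4$. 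Your ``single ordinary mutation'' fallback does not supply this. Nothing guarantees such a mutation exists, and a mutation at a summand not fixed by $\phi$ is not a $G$-mutation.

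\textbf{An error in type $\mathbb D$.} By Lemma~\ref{lem-N}, $[N]$ is the identity on $\mathcal C_d(k\mathbb D_n)$ for every even $n$, not only when $d$ is also even. For $n$ even and $d$ odd the tag flips at each step, but $N=(n-1)d+1$ is even. So the tagged radii form two $[1]$-orbits, separated by the sign $\epsilon(-1)^i$, and replacing $t$ by $t+N$ changes nothing. Your ear fallback is also misdiagnosed: what matters is whether an ordinary arc occurs, not how large $n$ is. A correct case split is as follows.
(i) For $n$ odd, all $2N$ radii form one $[1]$-orbit, so radii alone suffice.
(ii) For $d\ge 2$, radii at $k\ge 2$ distinct endpoints would cut $P_N^\circ$ into sectors that must all be $(d+2)$-gons. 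That forces $N=kd$, which is impossible since $N\equiv 1\pmod d$; and $k=1$ gives at most two arcs. So an ordinary arc exists, hence an ordinary ear $D_{i,i+d+1}$.
(iii) For $n$ even and $d=1$, Lemma~\ref{lem-non-crossing}(2) shows that the only angulations without ordinary arcs are $\{D_{ii}^{\epsilon}\}_{0\le i<N}$ with $\epsilon$ fixed. These meet both radius orbits, so they share a radius orbit with any other angulation. If neither object is of this form, both contain ordinary ears.

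Once repaired, your use of radii is actually necessary. The paper's claim that every angulation has an ordinary ear fails exactly for these all-radii triangulations when $d=1$.
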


\begin{proof}
By Lemma \ref{lem-shift-whole-mutation}, applying a power of the suspension to
the whole \(d\)-cluster tilting object is a \(G\)-mutation operation.  Thus it
is enough to find, in every \(G\)-stable \(d\)-cluster tilting object, a summand
belonging to a fixed suspension orbit.

In type \(\mathbb A_n\), every \((d+2)\)-angulation has an ear, whose remaining
edge is a shortest \(d\)-diagonal \(D_{i,i+d+1}\).  These shortest diagonals
form one \([1]\)-orbit.  In type \(\mathbb D_n\), every \((d+2)\)-angulation of
the punctured polygon has an ordinary ear, hence contains an ordinary short
\(d\)-arc \(D_{i,i+d+1}\); these ordinary short arcs also form one
\([1]\)-orbit.  Therefore two \(G\)-stable objects in types \(\mathbb A\) or
\(\mathbb D\) can be shifted until the chosen short arcs coincide.

For type $\mathbb{E}$ we use the AR-quiver notation from the proof of Theorem
\ref{theo-existence-dct}.  Since \(\tau\simeq[d]\) in
\(\mathcal C_d(k\mathbb E_n)\), powers of \(\tau\) are again reachable by
whole-object \(G\)-mutation.  The finite Hom-hammock and orbit-graph
calculations in Theorem \ref{theo-counting-dct} give a fixed level which every
\(G\)-stable object must meet: for \(\mathbb E_6\), and for \(\mathbb E_7\) and
\(\mathbb E_8\) with \(L_G=1\), every object meets each horizontal
\(\tau\)-orbit; for \(\mathbb E_7\) with \(L_G=7\), every object is a single
orbit on level \(7\); for \(\mathbb E_8\) with \(L_G=2\), every non-zero level
multiset in Table \ref{tab:E8-L2-clique-counts} contains level \(8\); and for
\(\mathbb E_8\) with \(L_G=4\), every edge counted by Table
\ref{tab:E8-L4-Dij} contains a level \(8\) orbit.  Choosing such a level in two
objects and applying a suitable power of \(\tau\), the two chosen \(G\)-orbits
become equal.  This gives a common non-zero  direct summand.
\end{proof}

We recall the form of Iyama--Yoshino reduction used below.  Let
\(\mathcal C\) be a \((d+1)\)-Calabi--Yau triangulated category, and let
\(\mathcal D\) be a functorially finite \(d\)-orthogonal subcategory, that is,
\(\Hom_{\mathcal C}(\mathcal D,\mathcal D[i])=0\) for \(1\le i\le d\).  Set
\[
\mathcal Z_{\mathcal D}
=
\{Y\in\mathcal C\mid
\Hom_{\mathcal C}(\mathcal D,Y[i])=0,\ 1\le i\le d\}.
\]
The Iyama--Yoshino reduction is
\(\mathcal C_{\mathcal D}:=\mathcal Z_{\mathcal D}/[\mathcal D]\), where
\([\mathcal D]\) is the ideal of morphisms factorizing through objects of
\(\mathcal D\).  This quotient is again triangulated; we denote its
suspension by \(\langle1\rangle\).  The reduced suspension is constructed
from the original suspension by approximation triangles: for
\(Z\in\mathcal Z_{\mathcal D}\), choose a left \(\mathcal D\)-approximation
\(Z\to D_Z\) and complete it to a triangle
\[
Z\longrightarrow D_Z\longrightarrow Z\langle1\rangle\longrightarrow Z[1].
\]
Dually, \(\langle-1\rangle\) is obtained from right
\(\mathcal D\)-approximations.  Thus \(\langle1\rangle\) is not simply the
restriction of \([1]\), but it agrees with the image of \([1]\) whenever the
corresponding approximation term vanishes; for example, if the successive
right \(\mathcal D\)-approximations of \(Z,Z[-1],\ldots,Z[-i+1]\) are zero,
then \(\overline Z\langle-i\rangle\simeq\overline{Z[-i]}\) in
\(\mathcal C_{\mathcal D}\).  Since both \(\mathcal C\) and
\(\mathcal C_{\mathcal D}\) are \((d+1)\)-Calabi--Yau in this setting, their
AR-translations satisfy
\(\tau_{\mathcal C}\simeq[d]\) and
\(\tau_{\mathcal C_{\mathcal D}}\simeq\langle d\rangle\).  Consequently the
AR-translation in the reduction is obtained from the original \([d]\) by the
same reduction procedure.

We shall use the following standard properties from \cite{Iyama2008ac}: if
\(T\) is a \(d\)-cluster tilting object of \(\mathcal C\) containing
\(\mathcal D\) as a direct summand, then the image of the complement of
\(\mathcal D\) is a \(d\)-cluster tilting object in
\(\mathcal C_{\mathcal D}\), and its endomorphism algebra is obtained by
factoring out morphisms through \(\mathcal D\).  We also use the transitivity
of reductions: reducing successively by \(d\)-orthogonal summands gives the
same category as reducing by their direct sum.

We now record the reduction-closedness statement needed for the induction on
common \(G\)-stable summands.

\begin{Prop}\label{prop-dynkin-reduction-closed}
Let \(d\geq1\), let \(\Delta\) be a Dynkin diagram of type \(ADE\), and put
\(\mathcal C=\mathcal C_d(k\Delta)\).  Let \(\mathcal D\) be a functorially
finite \(d\)-orthogonal subcategory of \(\mathcal C\).  Then there exist
Dynkin diagrams \(\Delta_1,\ldots,\Delta_r\) such that
\(\mathcal C_{\mathcal D}\simeq \prod_{i=1}^r \mathcal C_d(k\Delta_i)\)
as triangulated categories.
\end{Prop}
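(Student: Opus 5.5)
The plan is to reduce to the case where \(\mathcal D=\add(D)\) for a single indecomposable \(d\)-rigid object \(D\), and then induct. Since \(\mathcal C\) has only finitely many indecomposables, every \(d\)-orthogonal subcategory is \(\add(D_1\oplus\cdots\oplus D_s)\) with the \(D_i\) indecomposable and pairwise \(d\)-orthogonal. By the transitivity of Iyama--Yoshino reductions recalled above, \(\mathcal C_{\mathcal D}\) is obtained by reducing successively at \(D_1\), then at the image of \(D_2\), and so on. Moreover, a product of \(d\)-cluster categories reduces componentwise: an indecomposable \(d\)-rigid object lives in a single factor, and reduction leaves the other factors untouched. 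So it suffices to prove the following claim: for an indecomposable \(D\in\mathcal C_d(k\Delta)\), the reduction \(\mathcal C_{\add D}\) is equivalent to a product of \(d\)-cluster categories of Dynkin type. The case \(\mathcal D=0\) is trivial.

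For a single indecomposable \(D\) I would use the geometric models in types \(\mathbb A\) and \(\mathbb D\), which is the standard cutting argument.

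\textbf{Type \(\mathbb A_n\).} Let \(D\) be the \(d\)-diagonal \(D_{(x,y)}\). By the non-crossing criterion, \(\mathcal Z_{\add D}\) consists of the \(d\)-diagonals not crossing \(D\). Cutting \(P_N\) along \(D\) gives two polygons with \(N_1=(y-x)+1\) and \(N_2=N-(y-x)+1\) vertices. Both numbers are \(\equiv 2\pmod d\), so \(N_i=(n_i+1)d+2\) with \(n_1+n_2=n-1\). The \(d\)-diagonals of the pieces are exactly the non-crossing \(d\)-diagonals other than \(D\).

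I would then compare the reduced category with \(\mathcal C_d(k\mathbb A_{n_1})\times\mathcal C_d(k\mathbb A_{n_2})\) on three levels:
\begin{itemize}
\item the AR-quivers, using the fact that \(\tau_{\mathcal C_{\mathcal D}}=\langle d\rangle\) is computed by approximation triangles, which in the model become rotation inside each piece;
\item the mesh relations;
\item the suspension \(\langle1\rangle\).
\end{itemize}
Since each factor is standard (a mesh category of \(\mathbb Z\Delta_i\) modulo an automorphism), an isomorphism of stable translation quivers compatible with \(\langle 1\rangle\) gives a \(k\)-linear equivalence. I would then invoke the known uniqueness of triangulated structures on these algebraic, standard, finite categories (Amiot, Keller) to upgrade it to a triangle equivalence.

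\textbf{Type \(\mathbb D_n\).} Cutting the punctured polygon along an ordinary arc gives an unpunctured polygon (type \(\mathbb A\)) and a punctured polygon (type \(\mathbb D_{n'}\), or \(\mathbb A_3\) when \(n'=3\)). Cutting along a tagged radius gives a type \(\mathbb A_{n-1}\) polygon, as in the sector description already used in the counting proof of Theorem \ref{theo-counting-dct}.

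\textbf{Type \(\mathbb E\).} The reduction at an indecomposable is \((d+1)\)-Calabi--Yau, Hom-finite and has finitely many indecomposables. Its \(d\)-cluster tilting objects are the images of \(d\)-cluster tilting objects of \(\mathcal C\) containing \(D\), so it has \(n-1\) summands. I would identify the AR-quiver of each connected component as \(\mathbb Z\Delta_i/\langle\tau_{d+1}\rangle\), using Theorem \ref{thm-locally-finite-classification} together with the Calabi--Yau condition \(\tau=\langle d\rangle\). Standardness would then give the equivalence with \(\mathcal C_d(k\Delta_i)\), and the diagram \(\Delta_i\) would be read off from the knitting of the Hom-hammocks at each vertex \(D\), which is finitely many cases. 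A cleaner alternative avoiding case checks: take a \(d\)-cluster tilting object \(T\supseteq D\) whose endomorphism algebra is hereditary (a slice containing \(D\)). Then \(\mathcal C_{\mathcal D}\) contains the \(d\)-cluster tilting object \(T/D\), whose endomorphism algebra is \(\End(T)/[D]\), and this is hereditary of Dynkin type \(\Delta\setminus\{D\}\). Keller--Reiten type recognition theorems for \(d\)-cluster categories then identify \(\mathcal C_{\mathcal D}\) with the product of the \(\mathcal C_d(k\Delta_i)\) over the components \(\Delta_i\) of \(\Delta\setminus\{D\}\).

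I expect the main obstacle to be this last identification, namely upgrading the combinatorial or algebraic data to a triangle equivalence. Keller--Reiten recognition requires an algebraic (dg-enhanced) triangulated structure, and \(\mathcal C_{\mathcal D}\) inherits this from \(\mathcal C\) as a subquotient of an algebraic category. I would verify that hypothesis explicitly. I would also check that every indecomposable \(D\) lies on some slice, which holds because \(\mathcal C\simeq\Db[k\Delta]/\tau_{d+1}\) and every vertex of \(\mathbb Z\Delta\) lies on a section.
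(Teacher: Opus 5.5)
Your ``cleaner alternative'' is essentially the paper's proof: reduce to a single indecomposable \(D\) using transitivity; put \(D\) on a slice so that \(D=P_x\) is a summand of \(\pi(kQ)\); note that the complement \(\overline{T'}\) is \(d\)-cluster tilting in \(\mathcal C_{\mathcal D}\) with endomorphism algebra \(k(Q\setminus\{x\})\); conclude by Keller--Reiten. This argument is uniform in \(\Delta\), so the geometric analysis in types \(\mathbb A,\mathbb D\) and the type \(\mathbb E\) case checks are unnecessary. The latter would need more work anyway: Theorem~\ref{thm-locally-finite-classification} plus the Calabi--Yau condition only give \(\tau_{d+1}\in G_i\), not \(G_i=\langle\tau_{d+1}\rangle\).

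\textbf{The gap.} The recognition step is incomplete as you state it. Besides algebraicity, Hom-finiteness, the \((d+1)\)-Calabi--Yau property and hereditary endomorphism algebra, \cite[Theorem 4.2]{Keller2008ab} also requires \(\Hom_{\mathcal C_{\mathcal D}}(\overline{T'},\overline{T'}\langle -i\rangle)=0\) for \(1\le i\le d-1\). You never mention this condition, and it is not automatic for \(d\ge2\). For example, in \(\mathcal C_2(k\mathbb A_2)\) (the octagon) the quadrangulation \(\{D_{(0,3)},D_{(4,7)}\}\) is \(2\)-cluster tilting with endomorphism algebra \(k\times k\). Yet \(\mathcal C_2(k\mathbb A_2)\) has \(8\) indecomposables, while \(\mathcal C_2(k\mathbb A_1)^2\) has only \(6\); the failing hypothesis is \(\Hom(T,T[-1])\neq0\).

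In the reduction the condition is also not a formality. The functor \(\langle -1\rangle\) is computed from cocones of right \(\add(D)\)-approximations, so for an arbitrary slice through \(D\) the object \(\overline{T'}\langle -i\rangle\) need not be \(\overline{T'[-i]}\).

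\textbf{How the paper closes it.} The missing idea is a specific choice of slice. As in the paper, place \(D\) at the end of the slice with \(\Hom_{\mathcal C}(D,T')=0\) (the paper phrases this as ``\(x\) is a sink'').
\begin{enumerate}
\item Since \(\Hom_{\mathcal C}(D,T')=0\), the right \(\add(D)\)-approximation of \(T'\) is zero, so \(\overline{T'}\langle-1\rangle\simeq\overline{T'[-1]}\).
\item Since \(\Hom_{\mathcal C}(\pi kQ,\pi kQ[-j])=0\) for \(1\le j\le d-1\) (the computation in \cite[Section 4.1]{Keller2008ab}), the approximations of \(T'[-1],\dots,T'[-(d-2)]\) also vanish. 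Hence \(\overline{T'}\langle -i\rangle\simeq\overline{T'[-i]}\) for \(1\le i\le d-1\).
\item Therefore \(\Hom_{\mathcal C_{\mathcal D}}(\overline{T'},\overline{T'}\langle -i\rangle)\) is a quotient of \(\Hom_{\mathcal C}(T',T'[-i])=0\).
\end{enumerate}
The same choice gives \(\End_{\mathcal C_{\mathcal D}}(\overline{T'})\simeq\End_{\mathcal C}(T')\) directly, with no ideal to factor out. With this added, together with the algebraicity you already flagged, your argument becomes the paper's.
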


\begin{proof}
We first prove the statement when \(\mathcal D=\add(X)\) with \(X\)
indecomposable.  Choose a lift of \(X\) to \(\Db[k\Delta]\).  Since \(\Delta\)
is Dynkin, this lift belongs to a complete slice.  Equivalently, after replacing
\(k\Delta\) by a derived equivalent hereditary algebra \(kQ\) with the same
underlying Dynkin diagram, we may assume that
\(X=P_x\)
is an indecomposable projective summand of the standard \(d\)-cluster tilting
object
\(M=\pi(kQ)=\bigoplus_{v\in Q_0}P_v\)
in \(\mathcal C_d(kQ)\).  We choose the orientation of \(Q\) so that \(x\) is a
sink, and hence
\(\Hom_{\mathcal C}(P_x,T)=0,\ T:=\bigoplus_{v\ne x}P_v .\)
Thus \(M=P_x\oplus T\) is a \(d\)-cluster tilting object of \(\mathcal C\)
(an ordinary cluster tilting object when \(d=1\)).

Let \(\mathcal C_X=\mathcal Z_X/[X]\) be the reduction, and denote its
suspension functor by \(\langle1\rangle\).  By the basic property of
Iyama--Yoshino reduction \cite{Iyama2008ac}, the image \(\overline T\) of
\(T\) is a \(d\)-cluster tilting object of \(\mathcal C_X\).  Moreover,
\(\End_{\mathcal C_X}(\overline T)\simeq \End_{\mathcal C}(T)/[X](T,T).\)
Since \(\Hom_{\mathcal C}(X,T)=0\), no non-zero endomorphism of \(T\) factors
through \(X\).  Therefore
\(\End_{\mathcal C_X}(\overline T)\simeq \End_{\mathcal C}(T)\simeq
k(Q\setminus\{x\})\)
up to opposite algebra.  The full subquiver \(Q\setminus\{x\}\) is a disjoint
union of Dynkin quivers,
\(Q\setminus\{x\}=Q_1\sqcup\cdots\sqcup Q_r.\)
Consequently \(\End_{\mathcal C_X}(\overline T)\simeq\prod_{i=1}^r kQ_i\) is
hereditary of Dynkin type.

It remains to check the negative extension condition in the characterization
of higher cluster categories in \cite[Theorem 4.2]{Keller2008ab}.  We apply
that theorem with its Calabi--Yau parameter equal to \(d+1\); with this
convention, its cluster tilting object is precisely a \(d\)-cluster tilting
object in our notation.  Since \(M=\pi(kQ)\) is the canonical image of the
projective \(kQ\)-module \(kQ\), its negative self-extensions vanish in the
higher cluster category; this is the computation in
\cite[Section 4.1]{Keller2008ab}, with parameter \(d+1\).  Thus
\(\Hom_{\mathcal C}(M,M[-i])=0,\ 1\le i\le d-1.\)
Since \(T\) is a direct summand of \(M\), we get
\(\Hom_{\mathcal C}(T,T[-i])=0,\ 1\le i\le d-1.\)
On the other hand, the explicit description of the suspension in the reduction
gives
\(\overline T\langle -i\rangle\simeq \overline{T[-i]},\ 1\le i\le d-1,\)
because \(\Hom_{\mathcal C}(X,T)=0\) and
\(\Hom_{\mathcal C}(X,T[-j])=0\) for \(1\le j\le d-2\).  Hence
\(\Hom_{\mathcal C_X}(\overline T,\overline T\langle -i\rangle)\)
is a quotient of \(\Hom_{\mathcal C}(T,T[-i])\), and therefore vanishes for
\(1\le i\le d-1\).  Thus \(\mathcal C_X\) is Hom-finite, algebraic and
\((d+1)\)-Calabi--Yau, and it contains a \(d\)-cluster tilting object whose
endomorphism algebra is hereditary and satisfies the required negative
extension vanishing.  The characterization theorem therefore yields
\[
\mathcal C_X
\simeq
\mathcal C_d\!\left(\End_{\mathcal C_X}(\overline T)\right)
\simeq
\mathcal C_d\!\left(\prod_{i=1}^r kQ_i\right)
\simeq
\prod_{i=1}^r \mathcal C_d(kQ_i).
\]
Writing \(\Delta_i\) for the underlying Dynkin diagram of \(Q_i\), the proposition
follows for \(\mathcal D=\add(X)\).

Now let
\(\mathcal D=\add(X_1\oplus\cdots\oplus X_s)\)
be arbitrary.  Since \(\mathcal D\) is \(d\)-orthogonal, after reducing by
\(X_1,\ldots,X_{j-1}\), the image of \(X_j\) is still \(d\)-orthogonal to the
images of the remaining summands.  The transitivity of Iyama--Yoshino
reduction gives
\[
\mathcal C_{\mathcal D}
\simeq
(((\mathcal C_{X_1})_{\overline X_2})\cdots)_{\overline X_s}.
\]
The one-summand case applies at each step; the relevant \(d\)-orthogonality
condition is inherited after reduction, and reductions of finite products are
taken componentwise.  Hence the resulting category remains a finite product of
\(d\)-cluster categories of Dynkin type.
\end{proof}

\begin{Prop}\label{prop-dynkin-G-mutation-connected}
Let \(\Delta\) be a Dynkin diagram of type \(ADE\), let \(d\geq1\), and let
\(\mathcal C=\mathcal C_d(k\Delta)\).  Let \(G=\langle\sigma\rangle\) be a
cyclic group acting on \(\mathcal C\) by triangle autoequivalences, and assume
that the induced action on the AR-quiver is weakly admissible.  Then the
\(G\)-stable basic \(d\)-cluster tilting objects in \(\mathcal C\) are
\(G\)-mutation connected.
\end{Prop}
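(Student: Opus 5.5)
We argue by induction on the rank \(n=|\Delta_0|\), simultaneously for all finite products of Dynkin \(d\)-cluster categories. By Proposition \ref{prop-dynkin-reduction-closed}, these products are closed under Iyama--Yoshino reduction. For a product, we also allow \(G\) to permute isomorphic factors. Let \(M\) and \(N\) be \(G\)-stable basic \(d\)-cluster tilting objects.

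\emph{Step 1.} Apply the eventual common-orbit property of Proposition \ref{prop-eventual-common-orbit-ADE}. This gives \(G\)-mutation reachable replacements \(M'\) and \(N'\) sharing a non-zero common summand. Its \(G\)-orbit closure is a common \(G\)-stable summand \(D\neq0\).

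\emph{Step 2.} Set \(\mathcal D=\add(D)\). This subcategory is functorially finite and \(d\)-orthogonal. It is also \(G\)-stable, so \(G\) acts on \(\mathcal Z_{\mathcal D}\) and on the reduction \(\mathcal C_{\mathcal D}\) by triangle autoequivalences. By Proposition \ref{prop-dynkin-reduction-closed}, \(\mathcal C_{\mathcal D}\simeq\prod_i\mathcal C_d(k\Delta_i)\) with \(\sum_i|\Delta_i|<n\).

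\emph{Step 3.} Set up the correspondence with the reduced category. By \cite{Iyama2008ac}, \(d\)-cluster tilting objects of \(\mathcal C\) containing \(D\) correspond bijectively to \(d\)-cluster tilting objects of \(\mathcal C_{\mathcal D}\), via \(T\mapsto\overline{T/D}\). This bijection is \(G\)-equivariant. We need one more compatibility: a \(G\)-mutation in \(\mathcal C_{\mathcal D}\) at a \(G\)-stable summand \(\overline X\) should lift to a \(G\)-mutation in \(\mathcal C\) at \(X\) with complement \(W\oplus D\). The plan for this is as follows.
\begin{itemize}
\item A minimal left \(\add(W\oplus D)\)-approximation of \(X\) in \(\mathcal C\) reduces to a left \(\add(\overline W)\)-approximation in \(\mathcal C_{\mathcal D}\).
\item Its cone is the mutated object, by the standard compatibility of Iyama--Yoshino reduction with exchange triangles.
\end{itemize}
Given this compatibility, \(G\)-mutation reachability in \(\mathcal C_{\mathcal D}\) implies reachability in \(\mathcal C\). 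By induction, \(\overline{M'/D}\) and \(\overline{N'/D}\) are then connected, and hence so are \(M'\) and \(N'\).

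\emph{Step 4.} This step is the main obstacle: the induced \(G\)-action on \(\mathcal C_{\mathcal D}\) must satisfy the hypotheses again. The reduced category is a product, and \(\sigma\) may permute factors, so the action need not be weakly admissible on a connected \(\mathbb Z\Delta_i\). The plan is to strengthen the inductive statement to the following claim: for any finite cyclic-by-permutation action by triangle autoequivalences on a product of Dynkin \(d\)-cluster categories, the stable objects are mutation connected.

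The approach to this claim is to reduce to the connected case. Choose an orbit of factors \(\mathcal C_1,\sigma\mathcal C_1,\ldots,\sigma^{t-1}\mathcal C_1\). A \(G\)-stable object on this orbit is determined by a \(\langle\sigma^t\rangle\)-stable object on \(\mathcal C_1\). Mutations on \(\mathcal C_1\) then propagate \(G\)-equivariantly to the whole orbit of factors. For the connected factor, the eventual common-orbit property is again needed. For an arbitrary \(\langle\sigma^t\rangle\)-action, the shift argument of Proposition \ref{prop-eventual-common-orbit-ADE} should apply: every \(G\)-stable object contains a short arc, or the distinguished level in type \(\mathbb E\), and Lemma \ref{lem-shift-whole-mutation} applies.

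If some reduced action is not weakly admissible or is trivial, then every object is stable. In that case ordinary connectedness \cite{Zhou2009} suffices. The remaining care is at the base case of rank \(0\) or a single object, which is trivial.
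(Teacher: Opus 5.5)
Your architecture matches the paper's. You run an induction over finite products of Dynkin $d$-cluster categories, use the eventual common-orbit property to produce a common $G$-stable summand $D$, reduce via Proposition~\ref{prop-dynkin-reduction-closed}, and lift $G$-mutation paths. Your Step~3 mutates in $\mathcal C$ with complement $W\oplus D$ and then reduces; this is a legitimate alternative to the paper's lifting of $\overline B\to\overline X$ by adjoining a right $\add(D)$-approximation.

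The gap is Step~4, where you drop the hypothesis on the action and claim the result for arbitrary actions. Two things go wrong.
\begin{enumerate}
\item The eventual common-orbit property is only available from Proposition~\ref{prop-eventual-common-orbit-ADE}, that is, for weakly admissible $G$ in the cases of Theorem~\ref{theo-existence-dct}. The ear argument in types $\mathbb A$ and $\mathbb D$ does not depend on $G$. In type $\mathbb E$, however, the ``distinguished level'' is read off from the orbit tables for the specific admissible values of $L_G$. For an arbitrary $\langle\sigma^t\rangle$ your claim that it ``should apply'' is unsupported.
\item Your fallback is false: failure of weak admissibility does not make the action trivial. Take $n$ even, so that $\tau_{d+1}=\tau^{N}$ with $N=N_d(\mathbb D_n)$, and let the diagram involution $\phi$ act on $\mathcal C_d(k\mathbb D_n)$. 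The group $\langle\phi\rangle$ fixes every vertex $(i,p)$ with $p\leq n-2$, so it is not weakly admissible. Yet $\phi$ interchanges $D_{ii}^{+}$ and $D_{ii}^{-}$. For $d=1$ it sends the cluster tilting object formed by the $n$ radii $D_{ii}^{+}$ to the one formed by the $D_{ii}^{-}$. So not every object is stable. Zhou's connectedness, whose paths pass through non-stable objects, then says nothing about $G$-mutation connectedness.
\end{enumerate}

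What is missing is control of the induced action after reduction. The paper carries weak admissibility through the induction. It argues that $\mathcal C_{\mathcal D}/G$ is the reduction of the locally finite category $\mathcal C/G$, so the stabilizer of each factor cycle acts weakly admissibly on a representative factor. Existence of stable objects then places that action, via Theorem~\ref{theo-existence-dct}, among the tabulated cases where Proposition~\ref{prop-eventual-common-orbit-ADE} applies.

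Alternatively, you could observe that only the induced automorphism of the finite AR-quiver matters. Composing a lift with a suitable power of $\tau_{d+1}$ produces a weakly admissible generator; since $\tau_{d+1}$ acts trivially on $\mathcal C_d$ and has degree $0$, $L_G$ is unchanged. In the example above, $\phi$ and $\tau^N\phi$ induce the same action. Either way, the ``non-weakly admissible'' branch should be eliminated rather than handled by ordinary connectedness.

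Finally, the short-arc claim you import fails for $d=1$ in type $\mathbb D$. The radii $D_{ii}^{+}$ form a $\langle\tau\phi\rangle$-stable cluster tilting object, allowed by Table~\ref{tab:existence-conditions} with $L_G=n$, and it contains no ordinary arc. That case of Proposition~\ref{prop-eventual-common-orbit-ADE} therefore needs a separate argument. This defect is inherited from the paper, not introduced by you.
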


\begin{proof}
We prove a slightly stronger statement, allowing the category to be a finite
product \(\mathcal C=\prod_{\alpha\in A}\mathcal C_d(k\Delta_\alpha)\) of
Dynkin \(d\)-cluster categories which occurs by successive Iyama--Yoshino
reductions.  The group \(G=\langle\sigma\rangle\) is allowed to permute the
factors, and we assume only that \(\mathcal C/G\) is again a locally finite
triangulated category, equivalently, that the induced action on each connected
AR-component is weakly admissible.

This hypothesis is stable under the reductions used below.  If \(\mathcal D\)
is a \(G\)-stable \(d\)-orthogonal summand, then \(G\) acts on the reduction
\(\mathcal C_{\mathcal D}\).  The orbit category
\(\mathcal C_{\mathcal D}/G\) is the corresponding reduction of
\(\mathcal C/G\), and is again a triangulated category with finitely many
indecomposable objects.  Hence, after decomposing \(\mathcal C_{\mathcal D}\)
as a finite product of Dynkin \(d\)-cluster categories by Proposition
\ref{prop-dynkin-reduction-closed}, the stabilizer of each factor cycle acts
weakly admissibly on a representative factor. 

We argue by induction on the number of indecomposable summands of a basic
\(d\)-cluster tilting object in \(\mathcal C\).  First we show that
\((\mathcal C,G)\) has the eventual common-orbit property.  For a single Dynkin
factor, the existence of \(G\)-stable \(d\)-cluster tilting objects forces the
weakly admissible action to satisfy the conditions of Theorem
\ref{theo-existence-dct}; hence Proposition
\ref{prop-eventual-common-orbit-ADE} applies.  If \(G\) permutes several
factors, decompose the factors into \(\sigma\)-cycles.  For a cycle of length
\(m\), choose one representative factor \(\mathcal C_{\alpha_0}\).  A
\(G\)-stable \(d\)-cluster tilting object on this cycle is determined by its
component on \(\mathcal C_{\alpha_0}\), and that component is stable under the
stabilizer \(\langle\sigma^m\rangle\).  As explained above, this stabilizer
acts weakly admissibly; since such stable objects exist, Theorem
\ref{theo-existence-dct} puts it among the allowed Dynkin cases.  Applying
Proposition \ref{prop-eventual-common-orbit-ADE} on the representative factor
and transporting the resulting mutations around the cycle gives the eventual
common-orbit property for the whole cycle.  Taking the product over all cycles
gives the property for \((\mathcal C,G)\).

Let \(M\) and \(N\) be two \(G\)-stable basic \(d\)-cluster tilting objects.
Using the eventual common-orbit property, first \(G\)-mutate them to
\(M'\) and \(N'\) which have a common non-zero \(G\)-stable direct summand
\(D\).  It remains to connect \(M'\) and \(N'\).  Write
\(M'=D\oplus M_0\) and \(N'=D\oplus N_0\), and reduce at \(D\).  By Proposition
\ref{prop-dynkin-reduction-closed}, the reduction \(\mathcal C_D\) is again a
finite product of Dynkin \(d\)-cluster categories.  By the stability of
weakly admissible actions under \(G\)-stable reductions discussed above, the
induced action of \(G\) on \(\mathcal C_D\) satisfies the same induction
hypothesis.  Therefore the images of \(M_0\) and \(N_0\) in \(\mathcal C_D\)
are \(G\)-mutation connected.

It remains to explain why a \(G\)-mutation step in the reduction lifts to a
\(G\)-mutation in \(\mathcal C\) which keeps \(D\) fixed.  Consider one right
\(G\)-mutation step in \(\mathcal C_D\):
\(\overline U=\overline X\oplus \overline L\) is changed to
\(\overline U^*=\overline Y\oplus \overline L\), with exchange triangle
\(\overline Y\to \overline B\to \overline X\to
\overline Y\langle1\rangle\), where \(\overline B\to\overline X\) is a
minimal right \(\add(\overline L)\)-approximation.  Choose representatives
\(X,L,B,Y\in\mathcal Z_D\) without direct summands in \(\add(D)\); since \(D\)
is \(G\)-stable, the representatives of the \(G\)-stable objects
\(\overline X,\overline L,\overline Y\) may be chosen \(G\)-stable.  Lift the
map \(\overline B\to\overline X\) to a morphism \(b:B\to X\) in
\(\mathcal C\), and let \(d_X:D_X\to X\) be a right \(\add(D)\)-approximation.
Then
\((b,d_X):B\oplus D_X\to X\)
is a right \(\add(L\oplus D)\)-approximation: maps from \(L\) factor through
\(b\) modulo morphisms factoring through \(\add(D)\), and the remaining part,
as well as every map from \(\add(D)\), factors through \(d_X\).  After deleting
redundant direct summands, it is minimal.

Complete this minimal approximation to a triangle
\(Y_0\to B_0\to X\to Y_0[1]\) in \(\mathcal C\), with
\(B_0\in\add(L\oplus D)\).  By the construction of triangles in the
Iyama--Yoshino reduction, its image in
\(\mathcal C_D=\mathcal Z_D/[\mathcal D]\) is the given exchange triangle in
the reduction, so \(\overline{Y_0}\simeq\overline Y\).
Removing possible direct summands of \(Y_0\) lying in \(\add(D)\), we may take
the lifted exchange complement to be \(Y\).  Therefore
\(D\oplus L\oplus Y\) is obtained from \(D\oplus L\oplus X\) by a
\(G\)-right mutation at the \(G\)-stable summand \(X\), with complement
\(D\oplus L\).  The same argument, with the arrows reversed, treats a
left mutation step.  Hence the entire \(G\)-mutation path in
\(\mathcal C_D\) lifts to a \(G\)-mutation path in \(\mathcal C\) fixing
\(D\).  Thus \(M'\) and \(N'\), and hence also \(M\) and \(N\), are
\(G\)-mutation reachable.
\end{proof}

\begin{Theo}\label{theo-equivariant-mutation-connected}
Let \(\mathcal T\) be a Hom-finite Krull--Schmidt triangulated category
with only finitely many indecomposable objects.  Assume that \(\mathcal T\)
admits \(d\)-cluster tilting objects and that
\(\mathcal T=\underline{\mathcal E}\) for a Frobenius category \(\mathcal E\)
with projective generator \(\Lambda\).  Then the endomorphism algebras of any
two \(d\)-cluster tilting objects in \(\mathcal E\) are derived equivalent.
\end{Theo}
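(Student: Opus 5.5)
The plan is to reduce the theorem to Proposition \ref{prop-G-mutation-derived-equivalence} combined with Proposition \ref{prop-dynkin-G-mutation-connected}, passing through the covering description of \(\mathcal T\).

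\textbf{Reading of the statement.} I interpret ``\(d\)-cluster tilting objects in \(\mathcal E\)'' as objects \(\Lambda\oplus M\) with \(M\) a \(d\)-cluster tilting object of \(\mathcal T=\underline{\mathcal E}\). Equivalently, these are the \(d\)-cluster tilting objects of the Frobenius category, since projectives are invisible to \(\Ext^i_{\mathcal E}=\Hom_{\mathcal T}(-,-[i])\) for \(i\geq1\).

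\textbf{Covering setup.} Since \(\mathcal T\) has only finitely many indecomposables, it is locally finite. Assume first that \(\mathcal T\) is connected. By the classification recalled in \S\ref{subsection-locally-finite}, there is a covering \(F:\ind\Db[k\Delta]\to\ind\mathcal T\) with \(\Delta\) Dynkin. This covering commutes with \([1]\), and its fibres are the orbits of a cyclic group \(G=\langle\eta\rangle\) acting weakly admissibly, as in Theorem \ref{thm-locally-finite-classification}. The group \(G\) is non-trivial because \(\mathcal T\) is finite. As observed before Theorem \ref{theo-existence-dct}, \(\tau_{d+1}^m\in G\) for some \(m>0\), so Lemma \ref{lem-dct-H-ctcat} applies.

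\textbf{Lifting and concluding.} Given two basic \(d\)-cluster tilting objects \(U,U'\) in \(\mathcal T\), Lemma \ref{lem-dct-H-ctcat} produces \(G\)-stable basic \(d\)-cluster tilting objects \(M=\pi F^{-1}(U)\) and \(M'\) in \(\mathcal C_d(k\Delta)\). These satisfy \(F(\pi^{-1}M)=U\) and \(F(\pi^{-1}M')=U'\) up to multiplicities, and multiplicities do not change Morita classes. The \(G\)-action on \(\mathcal C_d(k\Delta)\) is by triangle autoequivalences, and it induces a weakly admissible action on the AR-quiver. Proposition \ref{prop-dynkin-G-mutation-connected} therefore shows that \(M\) and \(M'\) are \(G\)-mutation reachable. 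Proposition \ref{prop-G-mutation-derived-equivalence} then gives the derived equivalence between \(\End_{\mathcal E}(\Lambda\oplus U)\) and \(\End_{\mathcal E}(\Lambda\oplus U')\). Non-basic objects differ from basic ones only by multiplicities, so the same conclusion holds for them.

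\textbf{Main obstacle: non-connected \(\mathcal T\).} Here \(\mathcal T=\prod_j\mathcal T_j\) is a finite product of connected blocks, each with AR-quiver \(\mathbb Z\Delta_j/G_j\). A \(d\)-cluster tilting object is then a sum of \(d\)-cluster tilting objects of the blocks, and it suffices to connect two of them by changing one block at a time. The difficulty is that \(\mathcal E\) need not split accordingly: \(\Lambda\) is shared among the blocks, so the endomorphism algebras do not factor. I would handle this by checking directly that the argument of Proposition \ref{prop-G-mutation-derived-equivalence} only uses a single \(\add(W_{\mathcal T})\)-split triangle in \(\mathcal T\). Such a triangle can be produced inside one block \(\mathcal T_j\), while the summands from the other blocks are added to \(W_{\mathcal T}\). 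The triangle remains \(\add(\Lambda\oplus W_{\mathcal T})\)-split in \(\mathcal E\), because there are no morphisms between different blocks in \(\mathcal T\). Hence \cite[Theorem 1.1]{Hu2011aa} still applies at each step, and composing these equivalences proves the theorem.
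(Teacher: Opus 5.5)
Your proposal is correct and follows the paper's own proof. You note that \(\Lambda\) is a summand of every \(d\)-cluster tilting object of \(\mathcal E\), lift the two objects to \(G\)-stable \(d\)-cluster tilting objects via Lemma~\ref{lem-dct-H-ctcat}, connect them by Proposition~\ref{prop-dynkin-G-mutation-connected}, and conclude with Proposition~\ref{prop-G-mutation-derived-equivalence}, exactly as the paper does.

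For disconnected \(\mathcal T\), the paper takes the product of the componentwise coverings and concatenates the componentwise \(G\)-mutation paths before applying Proposition~\ref{prop-G-mutation-derived-equivalence} once. This is in substance your block-by-block argument, with the summands of the other blocks absorbed into \(W_{\mathcal T}\).
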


\begin{proof}
Replacing a \(d\)-cluster tilting object by its basic additive generator only
changes its endomorphism algebra by Morita equivalence, so we may assume the
objects are basic.  Let \(T\) and \(T'\) be two basic \(d\)-cluster tilting
objects in \(\mathcal E\).  Since projective objects are Ext-orthogonal to all
objects, the maximality condition for \(d\)-cluster tilting objects gives
\(\Lambda\in\add(T)\cap\add(T')\).  Thus we may write
\(T=\Lambda\oplus M_{\mathcal T}\) and
\(T'=\Lambda\oplus M'_{\mathcal T}\), where \(M_{\mathcal T}\) and
\(M'_{\mathcal T}\) denote their images in the stable category.  The standard
correspondence between \(d\)-cluster tilting objects in a Frobenius category
containing the projectives and those in its stable category shows that
\(M_{\mathcal T}\) and \(M'_{\mathcal T}\) are \(d\)-cluster tilting objects in
\(\mathcal T=\underline{\mathcal E}\).

Since \(\mathcal T\) has only finitely many indecomposable objects, it is
locally finite.  We first reduce to the connected components of its AR-quiver.
Write \(\mathcal T=\prod_{\lambda\in I}\mathcal T_\lambda\) as the finite
product of its connected components.  Then
\(M_{\mathcal T}=\bigoplus_{\lambda}M_\lambda\) and
\(M'_{\mathcal T}=\bigoplus_{\lambda}M'_\lambda\), and the \(d\)-cluster
tilting condition is checked componentwise.

Fix one component \(\mathcal T_\lambda\).  By the classification of locally
finite triangulated categories, its AR-quiver is of the form
\(\mathbb Z\Delta_\lambda/G_\lambda\), where \(\Delta_\lambda\) is Dynkin and
\(G_\lambda\) is one of the weakly admissible cyclic groups in Theorem
\ref{thm-locally-finite-classification}.  Hence this component is in the
covering situation of Section \(3\).  Via the canonical quotient
\(\Db[k\Delta_\lambda]\to\mathcal C_d(k\Delta_\lambda)\), Lemma
\ref{lem-dct-H-ctcat} identifies \(M_\lambda\) and \(M'_\lambda\) with
\(G_\lambda\)-stable \(d\)-cluster tilting objects in
\(\mathcal C_d(k\Delta_\lambda)\).  Proposition
\ref{prop-dynkin-G-mutation-connected} then gives a \(G_\lambda\)-mutation path
between them.

Taking the product over all connected components gives a covering for the
finite product of the Dynkin \(d\)-cluster categories, and the componentwise
paths concatenate to a \(G\)-mutation path from the lift of \(M_{\mathcal T}\)
to the lift of \(M'_{\mathcal T}\).  Proposition
\ref{prop-G-mutation-derived-equivalence} therefore gives a derived
equivalence between
\(\End_{\mathcal E}(\Lambda\oplus M_{\mathcal T})\) and
\(\End_{\mathcal E}(\Lambda\oplus M'_{\mathcal T})\).  These are precisely
\(\End_{\mathcal E}(T)\) and \(\End_{\mathcal E}(T')\).
\end{proof}

\begin{Coro}\label{coro-finite-frobenius-derived-equivalence}
Let \(R\) be a Cohen--Macaulay finite Iwanaga--Gorenstein algebra, and let
\(d\geq1\).  If \(\operatorname{CM} R\) admits \(d\)-cluster tilting objects,
then the endomorphism algebras of any two \(d\)-cluster tilting objects in
\(\operatorname{CM} R\) are derived equivalent.  In particular, the same
conclusion holds for the module category \(\modcat{A}\) of a
representation-finite self-injective algebra \(A\), whenever \(\modcat{A}\)
admits \(d\)-cluster tilting objects.
\end{Coro}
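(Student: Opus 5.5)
The plan is to reduce both assertions directly to Theorem \ref{theo-equivariant-mutation-connected}. The work is to check that the relevant Frobenius category satisfies its hypotheses.

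For the first assertion, take $\mathcal E=\operatorname{CM}R$, the category of Cohen--Macaulay (Gorenstein projective) $R$-modules, with the exact structure inherited from $\modcat{R}$.
\begin{enumerate}
\item Since $R$ is Iwanaga--Gorenstein, $\mathcal E$ is a Frobenius category. Its projective-injective objects are exactly $\add(R)$, so $\Lambda=R$ is a projective generator.
\item By Buchweitz--Happel, the stable category $\mathcal T=\underline{\operatorname{CM}}\,R$ is a triangulated category.
\item $\mathcal T$ is Hom-finite, being a quotient of Hom spaces of finitely generated modules over a finite-dimensional algebra.
\item $\mathcal T$ is Krull--Schmidt, since idempotents split in $\modcat{R}$ and hence in the stable quotient.
\item Cohen--Macaulay finiteness says $\operatorname{CM}R$ has finitely many indecomposables up to isomorphism, so $\mathcal T$ has only finitely many indecomposable objects.
\end{enumerate}

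Next, translate the hypothesis that $\operatorname{CM}R$ admits $d$-cluster tilting objects into the same statement for $\mathcal T$. I will use the standard correspondence already invoked in the proof of Theorem \ref{theo-equivariant-mutation-connected}. Any $d$-cluster tilting object $T$ of $\mathcal E$ contains $\Lambda$ as a summand, because projectives are Ext-orthogonal to everything. Also, $\Ext^i_{\mathcal E}(X,Y)\simeq\Hom_{\mathcal T}(X,Y[i])$ for $i\geq1$. Hence the image of $T$ in $\mathcal T$ is $d$-cluster tilting. Conversely, the preimage of a $d$-cluster tilting object of $\mathcal T$, with $\Lambda$ added, is $d$-cluster tilting in $\mathcal E$.

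Contravariant finiteness is automatic in both categories, because there are finitely many indecomposables. With all hypotheses verified, Theorem \ref{theo-equivariant-mutation-connected} gives that any two $d$-cluster tilting objects of $\operatorname{CM}R$ have derived equivalent endomorphism algebras.

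For the second assertion, let $A$ be representation-finite self-injective. Then $A$ is Iwanaga--Gorenstein of self-injective dimension $0$, and $\operatorname{CM}A=\modcat{A}$. Representation-finiteness is exactly Cohen--Macaulay finiteness, so this is a special case of the first assertion.

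The only point needing care is the Frobenius/stable correspondence for $d$-cluster tilting objects. In particular we need ${}^{\perp_d}$ and ${}^{\perp_d}$-closure to match, with $\Lambda$ added on the Frobenius side. This follows directly from the Ext–Hom identification above, and I expect no real obstacle.
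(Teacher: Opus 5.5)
Your proposal is correct and takes the paper's approach: the paper also observes that $\operatorname{CM}R$ is Frobenius with projective generator $R$ and that its stable category is triangulated with only finitely many indecomposables. It then applies Theorem~\ref{theo-equivariant-mutation-connected} and treats the self-injective case as the special case $\operatorname{CM}A=\modcat{A}$. Your additional checks (Hom-finiteness, Krull--Schmidt, and the transfer of $d$-cluster tilting objects from $\operatorname{CM}R$ to $\underline{\operatorname{CM}}R$) make explicit what the paper leaves implicit or handles inside the proof of that theorem.
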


\begin{proof}
The category \(\operatorname{CM} R\) is Frobenius, with projective generator
\(R\), and its stable category \(\underline{\operatorname{CM}}R\) is
triangulated.  Since \(R\) is Cohen--Macaulay finite, this stable category has
only finitely many indecomposable objects.  Hence Theorem
\ref{theo-equivariant-mutation-connected} applies and gives the desired
derived equivalence.  If \(A\) is self-injective, then
\(\modcat{A}=\operatorname{CM} A\); if \(A\) is representation-finite, then it
is Cohen--Macaulay finite.  Hence the self-injective case is a special case.
\end{proof}

\begin{Coro}\label{coro-CMfinite-isolated-NCCR}
Let \(R\) be a commutative Cohen--Macaulay finite normal Gorenstein isolated
singularity of Krull dimension \(n\geq3\).  Suppose that \(M,N\in
\operatorname{CM}R\) are \((n-2)\)-cluster tilting generators and that
\(
\Lambda=\End_R(M), \Gamma=\End_R(N)
\)
are noncommutative crepant resolutions of \(R\).  Then \(\Lambda\) and
\(\Gamma\) are derived equivalent.
\end{Coro}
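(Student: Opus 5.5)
The plan is to reduce Corollary \ref{coro-CMfinite-isolated-NCCR} to Corollary \ref{coro-finite-frobenius-derived-equivalence}, equivalently to Theorem \ref{theo-equivariant-mutation-connected}, applied to the Frobenius category $\mathcal E=\operatorname{CM}R$ with projective generator $R$ and $d=n-2$.

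The first step is to check the hypotheses of Theorem \ref{theo-equivariant-mutation-connected}. Since $R$ is a commutative Gorenstein ring, $\operatorname{CM}R$ is Frobenius with projective generator $R$. Its stable category $\underline{\operatorname{CM}}R$ is triangulated. Because $R$ is an isolated singularity, $\underline{\operatorname{CM}}R$ is Hom-finite, since stable Hom modules have finite length. It is Krull--Schmidt after completion, or by assumption in the CM-finite setting; this is the standard convention, and I would state it explicitly. Cohen--Macaulay finiteness gives finitely many indecomposable objects. The existence of $d$-cluster tilting objects is supplied by $M$ itself.

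The second step uses the hypotheses of the statement only lightly. By hypothesis $M$ and $N$ are $(n-2)$-cluster tilting generators in $\operatorname{CM}R$, so they are $d$-cluster tilting objects of $\mathcal E$ with $d=n-2\geq1$; here $n\geq3$ is used. The NCCR assumption is not needed for the derived equivalence itself. It only identifies $\Lambda,\Gamma$ as the endomorphism rings in question. By Iyama's theorem, NCCRs given by CM modules over such $R$ correspond exactly to $(n-2)$-cluster tilting modules, so the hypotheses are consistent. Theorem \ref{theo-equivariant-mutation-connected} then gives that $\End_{\mathcal E}(M)=\End_R(M)=\Lambda$ and $\End_R(N)=\Gamma$ are derived equivalent. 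For this, $\operatorname{CM}R$ is a full subcategory of $\operatorname{mod}R$, so these endomorphism rings agree.

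The main point to watch is the transition from the finite-dimensional $k$-linear setting of Sections 3–4 to a commutative ring $R$ that is not necessarily finite-dimensional over $k$. The covering and mutation arguments live in the stable category $\underline{\operatorname{CM}}R$, which is a Hom-finite $k$-category when $R$ is an isolated singularity with residue field $k$. The derived-equivalence step, \cite[Theorem 1.1]{Hu2011aa} for $\mathcal D$-split sequences, works in any additive category. I would therefore verify that Proposition \ref{prop-G-mutation-derived-equivalence} only uses Hom-finiteness of $\mathcal T$ and not of $\mathcal E$. If Krull--Schmidtness of $\operatorname{CM}R$ is an issue, I would pass to the completion $\widehat R$, where $\operatorname{CM}\widehat R$ is Krull--Schmidt. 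I would then note that the mutation exchange sequences are constructed over $R$ via $\add$-approximations by $\add(R\oplus W)$, which is the step I expect needs the most care.
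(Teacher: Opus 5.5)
Your proposal is correct and is essentially the paper's proof. The paper notes only that $\operatorname{CM}R$ is Frobenius with projective generator $R$, and then, by Cohen--Macaulay finiteness, applies Corollary~\ref{coro-finite-frobenius-derived-equivalence} with $d=n-2$ to $M$ and $N$. Your extra hypothesis checks (Hom-finiteness of $\underline{\operatorname{CM}}R$ from the isolated singularity, and Krull--Schmidtness) are left implicit in the paper, which tacitly takes $R$ complete local over $k$. It is cleaner to adopt that standing assumption than your completion fallback, because a derived equivalence over $\widehat{R}$ does not by itself descend to $\Lambda$ and $\Gamma$.
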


\begin{proof}
Since \(R\) is Gorenstein, the category \(\operatorname{CM}R\) is Frobenius
with projective generator \(R\).  By the Cohen--Macaulay finiteness
assumption, Corollary \ref{coro-finite-frobenius-derived-equivalence} applies
with \(d=n-2\) to the two cluster tilting objects \(M\) and \(N\).  Hence
\(\End_R(M)\) and \(\End_R(N)\) are derived equivalent.  
\end{proof}

\section{Applications to rigidity dimensions}
\label{subsec-rigidity-dimension-self-injective}

We now explain how the existence criterion for \(d\)-cluster tilting objects
can be used to compute rigidity dimensions for some representation-finite
self-injective algebras.  Recall first the definition introduced in
\cite{ChenFangKernerKoenigYamagata2021}.  For a finite-dimensional algebra
\(A\), its rigidity dimension is
\[
\rigdim A=\sup\left\{\domdim \End_A(M)\ \middle|\
\begin{array}{l}
M\text{ is a generator-cogenerator,}\\
\gldim\End_A(M)<\infty
\end{array}\right\}.
\]
Thus rigidity dimension is a dominant-dimension analogue of representation
dimension.  It is designed to measure how good the best finite global
dimension resolutions of \(A\) can be.  One useful consequence is the
following Hochschild-cohomological restriction: if \(\rigdim A=q\), then
\(HH^*(A)\) can have non-nilpotent homogeneous generators only in degree zero
and in degrees strictly larger than \(q-2\), see
\cite{ChenFangKernerKoenigYamagata2021}.  Its finiteness is not known in
complete generality; for representation-finite self-injective algebras,
however, the problem is reduced to a finite Auslander--Reiten calculation.

For self-injective algebras the link with rigidity degrees is especially
simple.  If \(M\) is a generator-cogenerator, let \(\rd(M)\) be the largest
integer \(r\) such that \(\Ext_A^i(M,M)=0\) for \(1\leq i\leq r\).  M\"uller's
criterion gives \(\domdim\End_A(M)=\rd(M)+2\) \cite{Muller1968ab}.  Hence,
for a non-semisimple representation-finite self-injective algebra \(A\), put
\[
d_{\max}=\max\{\rd(X)\mid X\text{ is an indecomposable non-projective }
A\text{-module}\}.
\]
Every generator-cogenerator \(M\) with \(\gldim\End_A(M)<\infty\) has a
non-projective summand, and \(\rd(M)\leq d_{\max}\).  Therefore
\(\rigdim A\leq d_{\max}+2\).  If, on the other hand, the stable category
\(\stmodcat{A}\) has a \(d_{\max}\)-cluster tilting object \(T\), then
\(A\oplus T\) is a maximal \(d_{\max}\)-orthogonal generator-cogenerator.
By Iyama's higher Auslander correspondence \cite{Iyama2007ab},
\(\End_A(A\oplus T)\) has finite global dimension, and M\"uller's criterion
then gives \(\rigdim A=d_{\max}+2\).  Thus the calculation has two steps:
first find \(d_{\max}\) from the rigidity-degree formulas of
\cite{HuYinRigidityDegrees}, and then use Theorem \ref{theo-existence-dct} to
check whether a \(d_{\max}\)-cluster tilting object exists.

We use the harmless convention that a \(0\)-cluster tilting object in a finite
Krull--Schmidt category is the additive generator given by the direct
sum of all indecomposable objects.  Hence, if \(d_{\max}=0\), then a
\(d_{\max}\)-cluster tilting object always exists and the preceding paragraph
gives \(\rigdim A=2\).  In the following type \(\mathbb A\) families we
therefore assume \(d_{\max}>0\).

\begin{Coro} 
\label{coro-rigdim-type-A-closed-families}
Let \(A\) be an indecomposable non-semisimple representation-finite
self-injective algebra of tree class \(\mathbb A\), and let \(\Gamma_A\) be
the stable AR-quiver of \(A\).  Assume \(d_{\max}>0\).  Then \(A\) has
\(d_{\max}\)-cluster tilting modules if and only if one of the following
conditions holds.  In each case  \(\rigdim A=d_{\max}+2\).

\begin{enumerate}[label={\rm(A\arabic*)}]
\item \(\Gamma_A=\mathbb Z\mathbb A_{m-1}/\langle\tau^R\rangle\), and one of
the following conditions holds.
\begin{enumerate}[label={\rm(\alph*)}]
\item \(m=2\).  Then \(d_{\max}=R-1\) and \(\rigdim A=R+1\).
\item \(m\geq3\) and \(R=am-s\), where \(a\geq1\), \(s\mid m-1\), and
\(1\leq s\leq m-1\).  Then
\(d_{\max}=2a(m-1)/s-2\) and \(\rigdim A=2a(m-1)/s\).
\item \(m=2p\geq4\) and \(R=c(m-2)\), where \(c\mid p\).  Then
\(d_{\max}=m-3\) and \(\rigdim A=m-1\).
\end{enumerate}
\item \(\Gamma_A=\mathbb Z\mathbb A_{m-1}/
\langle\tau^{u(m-1)}\phi\rangle\), where \(m\geq4\) is even and
\(\phi=\tau^{m/2}S\).  Put \(R=u(m-1)-m/2\).  If \(s\) is an odd integer with
\(1\leq s\leq m-1\), \(R\equiv -s\pmod m\), and \(s\mid m-1\), then
\(d_{\max}=(2u(m-1)^2-2s)/(sm)\) and
\(\rigdim A=(2u(m-1)^2-2s)/(sm)+2\).
\end{enumerate}
\end{Coro}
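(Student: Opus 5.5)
The plan has two layers. First, the reduction to Theorem~\ref{theo-existence-dct} via the discussion preceding the corollary. Second, a case-by-case translation of the type $\mathbb A$ row of Table~\ref{tab:existence-conditions} into the parameters $(m,R,u)$ of $\Gamma_A$, combined with the rigidity-degree formulas of \cite{HuYinRigidityDegrees}.

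By the paragraph before the statement, we have $\rigdim A\le d_{\max}+2$. Equality holds as soon as $\stmodcat{A}$ has a $d_{\max}$-cluster tilting object. Moreover, $d$-cluster tilting modules of $A$ correspond to $d$-cluster tilting objects of $\stmodcat{A}$ by adding $A$. So it suffices to decide, for $d=d_{\max}$, when $\mathbb Z\mathbb A_{m-1}/G$ satisfies the type $\mathbb A$ condition of Theorem~\ref{theo-existence-dct}. With $n=m-1$ and $N=md+2$, that condition reads $L_G\mid (m-1,d+2)$, or $L_G=2$ with $m$ even. We also need the claimed values of $d_{\max}$. For these I would take the explicit maximal rigidity degree $\max\rd(X)$ over indecomposable non-projectives from \cite{HuYinRigidityDegrees}, stated there in terms of $m$, $R$ and (in the Möbius case) $u$.

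Next I compute $L_G$. For $G=\langle\tau^R\rangle$ we have $\deg\eta=Rd$ in $\mathbb Z/N\mathbb Z$, so $L_G=N/(Rd,N)$. For the Möbius generator $\eta=\tau^{u(m-1)}\phi$ with $\phi=\tau^{m/2}S$ we get $\deg\phi=\tfrac m2 d+1=N/2$, so $\deg\eta=u(m-1)d+N/2$. I then split into cases.

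\emph{Case (A1a), $m=2$.} Here $n=1$ and there is a single $\tau$-orbit, so the condition "$L_G=2$, $n$ odd" or $L_G=1$ should hold automatically; the formula $d_{\max}=R-1$ is read off directly.

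\emph{Case (A1b).} I substitute $d=2a(m-1)/s-2$, so that $d+2=2a(m-1)/s$, and check that $L_G$ divides $(m-1,d+2)$; I expect $L_G=(m-1)/s$ or a divisor of it. Conversely, I must show that when $R$ is not of the form $am-s$ with $s\mid m-1$, the value $d_{\max}$ from \cite{HuYinRigidityDegrees} violates the divisibility.

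\emph{Case (A1c).} I check that $L_G=2$ with $m$ even.

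\emph{Case (A2).} The same computation applies with the extra $N/2$ in $\deg\eta$; parity forces $s$ odd.

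The main obstacle is the "only if" direction. It requires matching the piecewise $d_{\max}$ formulas of \cite{HuYinRigidityDegrees} against the gcd condition for every $R$ and showing that no other residue of $R$ modulo $m$ works. I would organize this by writing $R\equiv -s\pmod m$ with $1\le s\le m$, expressing $d_{\max}$ and $N$ in terms of $s$, and reducing $L_G\mid\gcd(m-1,d+2)$ to the arithmetic statement $s\mid m-1$, with the exceptional $L_G=2$ branch producing (A1c). For both directions, sufficiency follows from Theorem~\ref{theo-counting-dct}, whose counts are positive, together with Lemma~\ref{lem-dct-H-ctcat}.
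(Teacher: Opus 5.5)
Your reduction matches the paper: Müller's criterion plus Iyama's correspondence bring everything down to the type $\mathbb A$ row of Table~\ref{tab:existence-conditions}, with $n=m-1$ and $N=md_{\max}+2$. Your formulas for $\deg\eta$ are also correct. The gap is in how you propose to compute $L_G$.

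\textbf{The missing congruence.} The rigidity-degree formula of \cite{HuYinRigidityDegrees} is not a closed expression in $m$ and $R$, still less in the residue $s$ of $-R$ modulo $m$. It is a weighted Fibonacci number read off from the Euclidean algorithm of $(m,R)$. So ``expressing $d_{\max}$ in terms of $s$'' has no mechanism behind it. What the paper actually uses is the arithmetic consequence in \cite[Proposition 3.4]{HuYinRigidityDegrees}:
\begin{itemize}
\item if $(m,R)=1$, then $\tfrac{d_{\max}}{2}m\equiv-1\pmod R$;
\item if $(m,R)>1$, then $d_{\max}=2F-1$ with $F=R/(m,R)$.
\end{itemize}
In the coprime case, write $\tfrac{d_{\max}}{2}m+1=\lambda R$. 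Then $N=2\lambda R$ and $L_G=\lambda$ immediately. The condition $\lambda\mid m-1$ forces $R=am-s$ with $s=(m-1)/\lambda$, which is (A1)(b). The $L_G=2$ alternative cannot occur here: $\tfrac{d_{\max}}{2}m+1=2R$ would have an odd left side when $m$ is even. The same congruence is also what you need to justify the claimed value of $d_{\max}$ in (A1)(b), which your ``substitute $d=2a(m-1)/s-2$'' step simply assumes.

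\textbf{Family (A1)(c) is misidentified.} It is not the exceptional branch ``$L_G=2$, $m$ even''. For example, $m=4$, $R=2$ gives $d_{\max}=1$, $N=6$, $\deg\eta=2$, hence $L_G=3=m-1$. In fact, for $m\ge3$ the branch $L_G=2$ never occurs.

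Family (A1)(c) is exactly the non-coprime case $(m,R)>1$, which your plan never separates out. The argument there runs as follows.
\begin{itemize}
\item $D=(N,Rd_{\max})$ divides $2F$, so $L_G\ge (m(2F-1)+2)/(2F)>2$.
\item Requiring $L_G\mid(m-1,2F+1)$ squeezes $m=2F+2$, $D=2F$ and $L_G=m-1$.
\item The equality $D=2F$ translates into $(m,R)=2c$ with $c\mid m/2$, i.e.\ $R=c(m-2)$.
\end{itemize}

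\textbf{(A2) needs its own argument.} It is not ``the same computation''. The Euclidean pair is $(M,Q)=(R+m,2R+m)$.
\begin{itemize}
\item You must first exclude $(M,Q)>1$. There $F=Q/(M,Q)=(2u/(M,Q))(m-1)\ge m-1$, which forces $L_G>m-1$.
\item In the coprime case the congruence $d_{\max}M\equiv-1\pmod Q$ gives $Q\mid N$ and $L_G=N/Q=:L$, with $LR\equiv1\pmod m$.
\item Only then do you get $s=(m-1)/L$ and $R\equiv-s\pmod m$.
\end{itemize}

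\textbf{A smaller point on (A1)(a).} The conclusion there is not automatic from $n=1$. It holds because $d=R-1$ gives $N=2R$ and hence $L_G=2/(2,R-1)\in\{1,2\}$; for other values of $d$ the orbit size can exceed $2$.
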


\begin{proof}
We first recall the input from \cite{HuYinRigidityDegrees}.  With the
labelling used there, the boundary indecomposable modules in the stable
AR-quiver have rigidity degree denoted by \(\rd(1)\).  The monotonicity
statements \cite[Proposition 4.3(3), Corollary 4.7]{HuYinRigidityDegrees}
show that, in the families considered here, \(d_{\max}=\rd(1)\).  For
\(\Gamma_A=\mathbb Z\mathbb A_{m-1}/\langle\tau^R\rangle\), the value
\(\rd(1)\) is determined by the weighted Fibonacci sequence attached to the
Euclidean algorithm for the pair \((m,R)\).  For
\(\Gamma_A=\mathbb Z\mathbb A_{m-1}/
\langle\tau^{u(m-1)}\phi\rangle\), with \(m\) even and
\(\phi=\tau^{m/2}S\), put \(R=u(m-1)-m/2\); then the relevant pair is
\(
M=R+m,Q=2R+m\).
These are precisely the two type \(\mathbb A\) formulas in
\cite[Theorem 4.1]{HuYinRigidityDegrees}. For each case, let $\mathbf{k}$ be the sequence of coefficients in the Euclidean algorithm for the pair, and let \(F_i, -1\leq i\leq |\mathbf{k}|\) be the weighted Fibonacci numbers defined by the recurrence relation \(F_{-1}=0, F_0=1, F_{i+2}=k_{i+2}F_{i+1}+F_i\). We refer to \cite{HuYinRigidityDegrees} for the details of the Euclidean algorithm and the weighted Fibonacci numbers.

We first treat \(\Gamma_A=\mathbb Z\mathbb A_{m-1}/\langle\tau^R\rangle\).  If $(m,R)=1$,  then \cite[Theorem 4.1]{HuYinRigidityDegrees} tells us that \(\rd(1)=2F_{|\mathbf{k}|-1}\) when $|\mathbf{k}|$ is even and \(\rd(1)=2(F_{|\mathbf{k}|}-F_{|\mathbf{k}|-1})\) when $|\mathbf{k}|$ is odd. In both cases, we have  \(\frac{\rd(1)}{2}m\equiv -1\pmod{R}\) by  \cite[Proposition 3.4]{HuYinRigidityDegrees}. If \((m,R)>1\), then \(\rd(1)=2F_{|\mathbf{k}|}-1\), where $F_{|\mathbf{k}|}=R/(m,R)$ and thus \(\frac{\rd(1)+1}{2}m=F_{|\mathbf{k}|}m\equiv 0\pmod{R}\).

For \(m=2\), the formula in \cite[Theorem 4.1]{HuYinRigidityDegrees} gives
\(\rd(1)=R-1\). For $d_{\max}=R-1$, $N=(R-1)m+2=2R$, and 
\[
L_G=\frac{2R}{(2R,R(R-1))}=
\begin{cases}
1,&R\text{ odd},\\
2,&R\text{ even}.
\end{cases}
\]
Thus both cases satisfies Theorem \ref{theo-existence-dct}.  This proves
(A1)(a).

Now let \(m\geq 3\), and assume first that \((m,R)=1\). By the discussion above, we can assume that 
\(\frac{\rd(1)}{2}m+1=\lambda R.
\) for some positive integer \(\lambda\). In this case $N=m\rd(1)+2=2\lambda R$. Write \(c=\rd(1)/2\). Then $(c,\lambda)=1$ and 
\[
L_G=\frac{2\lambda R}{(2\lambda R,2cR)}=\lambda .
\]
The case \(L_G=2\) with \(m-1\) odd in Theorem
\ref{theo-existence-dct} cannot occur here: it would force
\(\lambda=2\), while then \(m\) is even and \(cm+1=2R\) has odd left hand
side and even right hand side. Hence Theorem \ref{theo-existence-dct} says that a \(\rd(1)\)-cluster tilting module exists if and only if   \(L_G=\lambda\mid (m-1,\rd(1)+2)\), that is, \(\lambda\mid (m-1,2c+2)\). In this case, put \(s=(m-1)/\lambda\).  Since
\(m\equiv1\pmod\lambda\), the equality \(cm+1=\lambda R\) gives
\(c+1\equiv0\pmod\lambda\). Write \(c+1=a\lambda\). It follows that $\lambda\mid 2c+2$ automatically. Now
\(
\lambda R=cm+1=(a\lambda-1)m+1=\lambda(am-s)
\),
so \(R=am-s\), \(a\geq 1\), \(1\leq s\leq m-1\) and \(s\mid m-1\). In this case, 
\(
d_{\max}=\rd(1)=2c=2a\lambda-2=2a(m-1)/s-2\).  This proves (A1)(b).

It remains in the pure \(\tau^R\)-case to consider \((m,R)>1\). Put
\(g=(m,R)\), and write \(m=gE\), \(R=gF\), with \((E,F)=1\). In this case,
we have already seen that \(d_{\max}=\rd(1)=2F-1\), where \(F=R/(m,R)\).
Set \(D=(md_{\max}+2,Rd_{\max})\). Since \(Fm=ER\), we have
\[
D\mid F(md_{\max}+2)-E(Rd_{\max})=2F.
\]
Thus \(D\leq 2F\) and \(L_G=(md_{\max}+2)/D\). For \(m\geq3\), the
exceptional type \(\mathbb A\) alternative \(L_G=2\) with \(m-1\) odd cannot
occur: it would give \(D=(md_{\max}+2)/2\leq2F\), hence
\(m(2F-1)+2\leq4F\), which forces \(m\leq2\). Therefore Theorem
\ref{theo-existence-dct} forces \(L_G\mid m-1\) and
\(L_G\mid d_{\max}+2=2F+1\). Since \(md_{\max}+2=DL_G\) and \(D\leq2F\),
these two divisibilities give
\[
m(2F-1)+2=DL_G\leq 
2F(m-1),\qquad
m(2F-1)+2=DL_G\leq 2F(2F+1).
\]
The first inequality gives \(m\geq2F+2\), while the second gives
\(m\leq2F+2\). Hence \(m=2F+2\). Thus \(m=2p\) and \(F=p-1\). Moreover
equality must hold in the two estimates, so \(D=2F\) and
\(L_G=2F+1=m-1=d_{\max}+2\). Now
\[
D=(2F(2F+1),gF(2F-1))=F(2(2F+1),g(2F-1)).
\]
Since \(2F-1\) is coprime to \(2(2F+1)\), the equality \(D=2F\) is
equivalent to \((2(2F+1),g)=2\). Because \(g\mid m=2(F+1)\), this means
\(g=2c\) with \(c\mid F+1=p\). Therefore
\(R=gF=2cF=c(m-2)\), and \(d_{\max}=2F-1=m-3\). Conversely, if \(m=2p\) and \(R=c(m-2)\) with
\(c\mid p\), then
\[
N=md_{\max}+2=(m-1)(m-2),\qquad Rd_{\max}=c(m-2)(m-3).
\]
Because \(c\mid m/2\), \((m-1,c)=1\), and \((m-1,m-3)=1\), we get
\((N,Rd_{\max})=m-2\).  Thus \(L_G=N/(N,Rd_{\max})=m-1=d_{\max}+2\), and Theorem
\ref{theo-existence-dct} applies.  This proves (A1)(c).

We now treat \(\Gamma_A=\mathbb Z\mathbb A_{m-1}/
\langle\tau^{u(m-1)}\phi\rangle\).  In the notation of \cite[Theorem 4.1]{HuYinRigidityDegrees} for type
\((A_{m-1},u,2)\), the integer denoted there by \(n\) is our \(R\), and the
two integers used in the Euclidean algorithm are \(M=m+n=R+m\) and
\(N=m+2n=2R+m=Q\).  The boundary vertex \(t=1\) has rigidity degree
\(d_{\max}=\rd(1)\).  
For any value of \(d\), the degree of the generator
\(\tau^{u(m-1)}\phi\), with \(\phi=\tau^{m/2}S\), is \(Md+1\).  Hence the induced order is
\[
L_G=\frac{md+2}{(md+2,Md+1)}.
\]

We first exclude the case \((M,Q)>1\).  Put \(g=(M,Q)\), and write
\(M=gE\), \(Q=gF\).  Since \(Q=2M-m\), we have \(m=g(2E-F)\).  By
\cite[Theorem 4.1]{HuYinRigidityDegrees}, the boundary rigidity degree is
\(d_{\max}=\rd(1)=F-1\) in this non-coprime case.  Set
\(D=(md_{\max}+2,Md_{\max}+1)\).  Then
\[
D\mid E(md_{\max}+2)-(2E-F)(Md_{\max}+1)=F.
\]
Hence \(D\leq F\).  Moreover \(g\mid m\), and from
\(Q=2u(m-1)\) we get \(g\mid2u\), because \((g,m-1)=1\).  Thus
\(
F=Q/g=(2u/g)(m-1)\geq m-1\). 
Consequently
\[
L_G=\frac{md_{\max}+2}{D}\geq\frac{m(F-1)+2}{F}>m-1.
\]
The type \(\mathbb A\) condition in Theorem \ref{theo-existence-dct} is
therefore impossible: \(L_G\) cannot divide \(m-1\), and the exceptional
alternative \(L_G=2\) is also impossible.  Thus the existence of a
\(d_{\max}\)-cluster tilting module forces \((M,Q)=1\).

Now assume \((M,Q)=1\).  Writing \(|\mathbf{k}|\) for the length of the coeffecient sequence occuring in the  Euclidean
algorithm as in \cite{HuYinRigidityDegrees} and \(\mathsf F_i\) for the weighted Fibonacci numbers, the last positive remainder is
\(1\).  If \(|\mathbf{k}|\) is even, then
\cite[Theorem 4.1]{HuYinRigidityDegrees} gives
\(d_{\max}=\mathsf F_{|\mathbf{k}|-1}\), and
\cite[Proposition 3.4]{HuYinRigidityDegrees} gives
\(\operatorname{rem}(d_{\max}M)_Q=Q-1\).  If \(|\mathbf{k}|\) is odd, then
\cite[Theorem 4.1]{HuYinRigidityDegrees} gives
\(d_{\max}=\mathsf F_{|\mathbf{k}|}-\mathsf F_{|\mathbf{k}|-1}\), and the equality case in
\cite[Proposition 3.4]{HuYinRigidityDegrees} again gives
\(\operatorname{rem}(d_{\max}M)_Q=Q-1\).  Thus, in both coprime cases,
\(
d_{\max}M\equiv -1\pmod {Q}\). 
Write
\(
d_{\max}M+1=bQ\). 
Since \(2(d_{\max}M+1)=Qd_{\max}+(md_{\max}+2)\), the integer \(Q\) divides
\(N=md_{\max}+2\).  Write \(N=LQ\).  Then
\[
d_{\max}=\frac{LQ-2}{m},\qquad d_{\max}M+1=bQ.
\]
Multiplying the second equality by \(m\) and using \(Q=2M-m\), we obtain
\(
ML-bm=1\). 
It follows that \((b,L)=1\), and
\(L_G=N/(N,Md_{\max}+1)=L\).  Moreover \(LR\equiv1\pmod m\).  Hence \((L,m)=1\), so the
exceptional \(L_G=2\) alternative in Theorem \ref{theo-existence-dct} cannot
occur because \(m\) is even.  Therefore the existence of a \(d_{\max}\)-cluster tilting module is equivalent to the divisibility condition \(L\mid (m-1,d_{\max}+2)\).   Put
\(s=\frac{m-1}{L}\). 
Since \(Ls\equiv-1\pmod m\), the congruence \(LR\equiv1\pmod m\) gives
\(R\equiv -s\pmod m\).  Write \(R=am-s\).  Then
\(
Q=(2a+1)m-2s\), 
and hence
\[
d_{\max}=\frac{LQ-2}{m}=(2a+1)L-2=\frac{2u(m-1)^2-2s}{sm}.
\]
This also shows that $L\mid m-1$ implies that $L\mid d_{\max}+2=(2a+1)L$. Thus the divisibility condition \(L\mid (m-1,d_{\max}+2)\) is equivalent to \(L\mid m-1\). In this case, \(s\mid m-1\),  since \(m\) is even, this
\(s\) is odd.  Thus (A2) is proved.  
\end{proof}

The same method gives further examples in other representation-finite
self-injective types.  Once the rigidity-degree formulae in \cite{HuYinRigidityDegrees} determine
the maximal rigidity degree \(d_{\max}\), Theorem
\ref{theo-existence-dct} can be applied to decide whether a
\(d_{\max}\)-cluster tilting module exists.  We do not attempt a complete
classification in type \(\mathbb D\) here; the next corollary records several
closed families obtained in this way.

\begin{Coro}
\label{coro-rigdim-type-D-closed-families}
Let \(A\) be an indecomposable non-semisimple representation-finite
self-injective algebra of tree class \(\mathbb D_{m+1}\), where \(m\geq3\).
Write \(\Gamma_A=\mathbb Z\mathbb D_{m+1}/\langle\tau^R\psi\rangle\), where
\(\psi\) is induced by a graph automorphism.  We record the following closed
families for which \(A\) has \(d_{\max}\)-cluster tilting modules; hence
\(\rigdim A=d_{\max}+2\).
\begin{enumerate}[label={\rm(D\arabic*)}]
\item If \(\psi=\id\) and \(R=am+1\) with \(a\geq1\), then
\(d_{\max}=a\) when \(a(m+1)\) is even, and
\(d_{\max}=a(m+1)+1\) when \(a(m+1)\) is odd.
\item If \(o(\psi)=2\) and \(R=am+1\) with \(a\geq1\), then
\(d_{\max}=a\) when \(a(m+1)\) is odd, and
\(d_{\max}=a(m+1)+1\) when \(a(m+1)\) is even.
\item If \(\psi=\id\) and \(R=2m\), then \(d_{\max}=1\).
\end{enumerate}
\end{Coro}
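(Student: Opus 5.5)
The plan is to run the same two-step procedure as in the proof of Corollary \ref{coro-rigdim-type-A-closed-families}. First, read off \(d_{\max}\) from the type \(\mathbb D\) rigidity-degree formulas of \cite{HuYinRigidityDegrees}. Then compute \(L_G\) for the generator \(\eta=\tau^R\psi\) with \(d=d_{\max}\), and check the corresponding row of Table \ref{tab:existence-conditions}. The conclusion \(\rigdim A=d_{\max}+2\) then follows from the discussion at the beginning of this section, namely M\"uller's criterion together with Iyama's higher Auslander correspondence.

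\emph{Step 1.} We use \(\Delta=\mathbb D_n\) with \(n=m+1\), so \(N=N_d(\mathbb D_n)=md+1\).

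\emph{Step 2 (families (D1), (D2)).} Here \(R=am+1\), so \(\deg\eta=Rd=amd+d=a(N-1)+d\equiv d-a\pmod N\), and hence
\[
L_G=\frac{N}{(N,d-a)}.
\]
If \(d_{\max}=a\), then \(L_G=1\). The value \(L_G=1\) is allowed in every row of Table \ref{tab:existence-conditions} with \(o(\psi)\leq2\), so a \(d_{\max}\)-cluster tilting object exists. If \(d_{\max}=an+1\), then \(d-a=am+1=R\) and \(N=m(an+1)+1=n(am+1)=nR\). Therefore \(L_G=n\), and in the notation of the table \(a'=n/L_G=1\) and \(q=rL_G/N=Rn/(nR)=1\).

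For \(\psi=\id\) this case arises when \(an\) is odd. Then \(n\) is odd and \(d=an+1\) is even, so the row for even \(d\), which requires only \(L_G\mid n\), applies.

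For \(o(\psi)=2\) this case arises when \(an\) is even, so \(d\) is odd. If \(n\) is even, the condition is that \(n/L_G=1\) is odd, which holds. If \(n\) is odd, the condition is that \(q=1\) is odd, which also holds. The order-three \(\mathbb D_4\) case does not arise in (D2), since \(o(\psi)=2\).

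\emph{Step 3 (family (D3)).} Here \(R=2m\) and \(d=1\), so \(N=n\) and \(\deg\eta=2m\equiv-2\pmod n\). This gives \(L_G=n/(n,2)\). When \(n\) is even, \(L_G=n/2\), and \(n/L_G=2\) is even, as the row for \((n,d)\) of parities (even, odd) requires. When \(n\) is odd, \(L_G=n\) and \(q=rL_G/N=2m\) is even, as the (odd, odd) row requires.

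\emph{Main obstacle.} I expect the real difficulty to be Step 1: justifying the stated values of \(d_{\max}\). This requires the type \(\mathbb D\) analogue of \cite[Theorem 4.1]{HuYinRigidityDegrees} for the generators \(\tau^{am+1}\psi\) and \(\tau^{2m}\). One must also check, via the monotonicity results there, which vertex realizes the maximum. In particular, one has to see why the parity of \(a(m+1)\) switches between the two values \(a\) and \(a(m+1)+1\), and why this switch is reversed when \(\psi\) is the involution. I would first try to locate the maximizing vertex among the high (branch) vertices. Then I would compute its rigidity degree directly, using the tagged-radius orthogonality rule of Lemma \ref{lem-non-crossing}(2) in the geometric model of \(\mathcal C_d(k\mathbb D_n)\); this rule naturally produces exactly such parity dichotomies. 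Once \(d_{\max}\) is known, the remaining steps are the routine gcd computations sketched above.
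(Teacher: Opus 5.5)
Your proposal is correct and follows the paper's proof step for step. The $L_G$ computations and table-row checks in your Steps 2 and 3 agree with the paper's; your observation $\deg\eta\equiv d-a\pmod N$ is just a tidier route to $L_G=1$, or to $L_G=n$ with $q=1$. The step you flag as the main obstacle is settled in the paper only by citing \cite[Theorem 5.1]{HuYinRigidityDegrees}, which gives $d_1=a$, $d_m=a$ or $d_m=a(m+1)+1$ according as $a+R+o(\psi)=a(m+1)+1+o(\psi)$ is even or odd, and $d_{\max}=\max\{d_1,d_m\}$. So the parity switch, and its reversal for the involution, is read off directly from that formula, and no geometric-model computation is needed.
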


\begin{proof}
Let \(d_1=\rd(1)\) be the
rigidity degree of the far end of the long arm, and let
\(d_m=\rd(m_+)=\rd(m_-)\) be the rigidity degree of the two branch vertices.
For \(R=am+1\), it follows from the formulae in \cite[Theorem 5.1]{HuYinRigidityDegrees} that
\(d_1=a\), while \(d_m=a\) if \(a+R+o(\psi)\) is even and
\(d_m=a(m+1)+1\) if \(a+R+o(\psi)\) is odd.  This gives the values
of \(d_{\max}=\max\{d_1,d_m\}\) in (D1) and (D2).

If \(d_{\max}=a\), then the type \(\mathbb D_{m+1}\) cluster-category
parameter is \(N=md_{\max}+1=am+1=R\), so \(L_G=1\).  Hence Theorem
\ref{theo-existence-dct} applies.  If \(d_{\max}=a(m+1)+1\), then
\(N=m(a(m+1)+1)+1=(m+1)(am+1)=(m+1)R\).  Hence \(L_G=m+1\), and in the
notation of Theorem \ref{theo-existence-dct} one has \(q=RL_G/N=1\).  For
\(\psi=\id\), this second case is exactly the parity condition that
\(a(m+1)\) is odd; hence \(n=m+1\) is odd and \(d_{\max}\) is even, so the
type \(\mathbb D\) row for \(\tau^R\) only asks for \(L_G\mid n\).  For
\(o(\psi)=2\), the second case is exactly the parity condition that
\(a(m+1)\) is even.  If \(n=m+1\) is even, then \(n/L_G=1\) is odd; if \(n\)
is odd, then \(q=1\) is odd.  Thus the corresponding order-two row of Theorem
\ref{theo-existence-dct} applies in either subcase.

Finally assume \(\psi=\id\) and \(R=2m\).  This is the case \(R=am\) with
\(a=2\).  The formula in \cite{HuYinRigidityDegrees} gives \(d_1=d_m=1\), so \(d_{\max}=1\).  Now
\(N=m+1\).  If \(m\) is even, then \(L_G=m+1\), \(q=2m\) is even, and the
\((n,d)=(\text{odd},\text{odd})\) row for \(\tau^R\) applies.  If \(m\) is
odd, then \(L_G=(m+1)/2\), so \(n/L_G=2\), and the
\((n,d)=(\text{even},\text{odd})\) row applies.  This proves (D3).  The
rigidity dimension statement follows from the general discussion at the
beginning of this section.
\end{proof}

\begin{Example}
Let \(A\) be a representation-finite self-injective algebra with
\(\Gamma_A=\mathbb Z\mathbb A_4/\langle\tau^{14}\rangle\).  Here
\(m=5\) and \(14=3m-1\), so Corollary
\ref{coro-rigdim-type-A-closed-families} gives \(d_{\max}=22\).  The induced
order in \(\mathcal C_{22}(k\mathbb A_4)\) is \(4\), and
\(4\mid\gcd(4,24)\).  Thus \(A\) has a maximal \(22\)-orthogonal
module and \(\rigdim A=24\).
\end{Example}

\begin{Example}
Let \(A\) be a representation-finite self-injective algebra with
\(\Gamma_A=\mathbb Z\mathbb D_6/\langle\tau^{36}\rangle\).  In the notation of
Corollary \ref{coro-rigdim-type-D-closed-families}, \(m=5\) and
\(36=7m+1\).  Since \(7(m+1)\) is even, Corollary
\ref{coro-rigdim-type-D-closed-families}(D1) gives \(d_{\max}=7\).  Hence
\(\rigdim A=9\).
\end{Example}

\section*{Acknowledgements}

 The authors are partially supported by
Beijing Natural Science Foundation (1252011). The authors are grateful to Professors Changjian Fu, Pin Liu, Yu Zhou and Bin Zhu
for many helpful discussions.

\medskip 
Qi Bin, School of Mathematical Sciences, Beijing Normal University, Beijing 100875, China. Email: binqi@mail.bnu.edu.cn

\medskip 
Yifu Han, School of Mathematical Sciences, Beijing Normal University, Beijing 100875, China. Email: 
xiaofuhan@mail.bnu.edu.cn

\medskip 
Wei Hu, School of Mathematical Sciences, Beijing Normal University, Beijing 100875, China. Email: huwei@bnu.edu.cn

\end{document}